\title{Terminal 3-folds that are not Cohen-Macaulay }
\author{Burt Totaro}
\date{  }
\def\Z{\text{\bf Z}}
\def\Q{\text{\bf Q}}
\def\P{\text{\bf P}}
\def\F{\text{\bf F}}
\def\arrow{\rightarrow}
\def\imp{\Rightarrow}
\DeclareMathOperator{\Spec}{Spec}
\DeclareMathOperator{\Aut}{Aut}
\DeclareMathOperator{\tr}{tr}
\DeclareMathOperator{\id}{id}
\DeclareMathOperator{\red}{red}
\DeclareMathOperator{\m}{\mathfrak{m}}
\def\Im{\operatorname{im}}
\renewcommand\labelenumi{(\arabic{enumi})}
\renewcommand\theenumi\labelenumi
\begin{document}
\maketitle
\newtheorem{theorem}{Theorem}[section]
\newtheorem{proposition}[theorem]{Proposition}
\newtheorem{corollary}[theorem]{Corollary}
\newtheorem{lemma}[theorem]{Lemma}

\theoremstyle{definition}
\newtheorem{definition}[theorem]{Definition}
\newtheorem{example}[theorem]{Example}

\theoremstyle{remark}
\newtheorem{remark}[theorem]{Remark}

\usetikzlibrary{patterns}

An important ingredient of the minimal model
program is that Kawamata log terminal singularities in characteristic zero
are rational, and in particular Cohen-Macaulay.
In the special case of cone singularities, this fact is related
to the Kodaira vanishing theorem restricted
to Fano varieties. It turns out that Kodaira vanishing
fails even for Fano varieties, in every characteristic $p>0$
\cite{Totarokodaira}. This led to examples
of klt, and even terminal, singularities that are not Cohen-Macaulay
\cite{Kovacs, Totarokodaira, Yasudaterminal}.
(Terminal singularities are the smallest class of singularities
that can be allowed on minimal models.)

The most notable example was a terminal singularity
{\it of dimension 3 }that is not Cohen-Macaulay.
Namely, let $X$ be the quotient $(A^1-0)^3/G$
over the field $\F_2$, where the generator $\sigma$
of the group $G=\Z/2$ acts by
$$\sigma(x_1,x_2,x_3)=\bigg( 
\frac{1}{x_1},\;
\frac{1}{x_2},\;
\frac{1}{x_3}\bigg).$$
Then $X$ is terminal but not Cohen-Macaulay \cite[Theorem 5.1]{Totarokodaira}.
This is the lowest possible dimension, because every terminal
(or just normal) surface is Cohen-Macaulay. Cohen-Macaulayness
and stronger properties such as $F$-regularity help
to construct contractions of varieties. Partly for this reason,
the MMP for 3-folds is known only in characteristics at least 5
\cite{HX,Birkar,HW}. By Arvidsson--Bernasconi--Lacini, klt singularities
in characteristic greater than 5 are Cohen-Macaulay,
whereas there are klt singularities that are not Cohen-Macaulay
in characteristics 2, 3, and 5 \cite{ABL,Bernasconi,CT}.

In this paper, we construct terminal 3-fold singularities
that are not Cohen-Macaulay in five new cases:
mixed characteristic $(0,2)$, characteristic 3,
mixed characteristic $(0,3)$, characteristic 5,
and mixed characteristic $(0,5)$ (Theorems
\ref{mainintro}, \ref{F3thm}, \ref{Z3thm}, \ref{F5thm},
and \ref{Z5thm}). This is optimal, in view of the result
of Arvidsson--Bernasconi--Lacini.
Indeed, the MMP for schemes of dimension 3 was developed in mixed
characteristic when the residue characteristic is greater than 5
\cite{B+,TY}. This raised the question of whether vanishing theorems
for 3-folds hold in mixed characteristic.
Given our counterexample over $\F_2$,
one might expect an example
of dimension 4, flat over the 2-adic integers $\Z_2$, with fiber
over $\F_2$ being the 3-fold singularity above.
In fact, each of our examples has dimension 3 as a scheme.
For example, over $\Z_2$ we have:

\begin{theorem}
\label{mainintro}
Let $Y = \{(x,y,i)\in A^3_{\Z_2}: x\neq 0, y\neq 0, i^2=-1\}$.
Let the group $G=\Z/2=\{1,\sigma\}$ act on $Y$
by
$$\sigma(x,y,i)=(1/x,1/y,-i).$$
Then the scheme $Y/G$ is terminal, not Cohen-Macaulay,
of dimension 3, and flat over $\Z_2$.
\end{theorem}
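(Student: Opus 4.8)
The plan is to reduce everything to a computation at a single point. Writing $\mathcal{O}=\Z_2[i]$ for the ring of integers of the ramified quadratic extension $\Q_2(i)/\Q_2$, with $\sigma$ acting as the nontrivial Galois automorphism $i\mapsto -i$, I would identify $Y$ with $\mathbb{G}_{m,\mathcal{O}}^2=\Spec\mathcal{O}[x^{\pm1},y^{\pm1}]$ and $\sigma$ with the composite of inversion on the torus and conjugation on $\mathcal{O}$. Dimension $3$ and flatness are then immediate: $\mathcal{O}[x^{\pm1},y^{\pm1}]$ is a free $\Z_2$-module, hence so is its invariant subring $R=\mathcal{O}[x^{\pm1},y^{\pm1}]^{\sigma}$ (a submodule of a free module over the PID $\Z_2$ is torsion-free, so $R$ is flat), and $R\subset\mathcal{O}[x^{\pm1},y^{\pm1}]$ is integral, so $\dim R=3$. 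Next I would locate the singular locus: the fixed scheme of $\sigma$ on $Y$ is cut out by $(x^2-1,\,y^2-1,\,2)$, supported at the single point $P_0=(x{=}1,y{=}1)$ of the special fibre and of codimension $3$. Away from $P_0$ the action is free, so $Y\to Y/G$ is a $G$-torsor there and $Y/G$ is regular off the image point $\bar P$. Both remaining assertions are therefore local at $\bar P$, and I would pass to the completion $B=A^{\sigma}$, where $A=\mathcal{O}[[s,t]]$ ($s=x-1$, $t=y-1$) is regular of dimension $3$ and $\sigma$ acts by $s\mapsto -s/(1+s)$, $t\mapsto -t/(1+t)$, $\pi\mapsto 2-\pi$ for the uniformiser $\pi=1+i$.

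For the failure of Cohen--Macaulayness I would argue cohomologically. Being the invariant ring of a regular ring, $B$ is normal, hence $S_2$, so $H^0_{\m}(B)=H^1_{\m}(B)=0$ and $B$ is Cohen--Macaulay iff $H^2_{\m}(B)=0$. On the punctured spectrum $U=\Spec A\setminus\{P_0\}$ the group $G$ acts freely, so $U\to U/G=\Spec B\setminus\{\bar P\}$ is an étale $G$-torsor and the Hochschild--Serre spectral sequence $H^p(G,H^q(U,\mathcal{O}_U))\Rightarrow H^{p+q}(U/G,\mathcal{O})$ applies. Since $A$ is regular of dimension $3$, $H^0(U,\mathcal{O})=A$, $H^1(U,\mathcal{O})=0$ and $H^2(U,\mathcal{O})=H^3_{\m}(A)$, and the low-degree terms collapse to give $H^2_{\m}(B)\cong H^1(U/G,\mathcal{O})\cong H^1(G,A)$. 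Finally $\mathcal{O}\hookrightarrow A$ is a $G$-equivariant direct summand (split by $s,t\mapsto 0$), so $H^1(G,A)$ contains $H^1(\Z/2,\mathcal{O})=\{a:\tr a=0\}/(1-\sigma)\mathcal{O}=i\Z_2/2i\Z_2\cong\Z/2$, whose nonvanishing is exactly the wild ramification of $\mathcal{O}/\Z_2$. Hence $H^2_{\m}(B)\neq0$ and $Y/G$ is not Cohen--Macaulay. (Equivalently, by flatness one may reduce modulo $2$ and check that the special fibre fails to be Cohen--Macaulay.)

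Terminality is the main obstacle, precisely because the action is wild. Over $\F_2$ inversion linearises to $-1=+1$ and $2\in\m_A^2$, so $\sigma$ acts trivially on $\m_A/\m_A^2$; the tame Reid--Tai numerology does not apply, and after blowing up $\bar P$ the exceptional divisor becomes pointwise fixed, so the induced quotient map is wildly ramified in codimension $1$. The plan is first to check that $K_{Y/G}$ is $\Q$-Cartier: $\sigma$ fixes the frame $\tfrac{1}{f'(\pi)}\,\tfrac{dx}{x}\wedge\tfrac{dy}{y}\wedge d\pi$ of the dualizing sheaf $\omega_{Y/\Z_2}$ (the two sign changes from inversion cancel the Galois sign), so $\omega$ descends, and $K_Y=\pi^{*}K_{Y/G}$ because $\pi$ is étale in codimension $1$. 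One then extracts each divisor $E$ over $\bar P$ on a $\sigma$-equivariant resolution of $Y$ and computes its discrepancy by comparing $K_Y=\pi^{*}K_{Y/G}$ across the wildly ramified quotient.

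The delicate step, and the real content of the theorem, is this wild discrepancy computation. A naive pairing of the point-blow-up discrepancy ($=2$ on the regular $3$-fold $Y$) against the wild different tempts one into a borderline value, so the different of the codimension-$1$ ramification must be computed exactly and combined with the arithmetic different of $\mathcal{O}/\Z_2$; establishing that these contributions conspire to make every discrepancy strictly positive—by analogy with the terminal singularity $\tfrac12(1,1,1)$ (age $3/2$) in characteristic $0$ and with the characteristic-$2$ torus quotient of \cite{Totarokodaira}—is the crux. I expect this exact wild-ramification/arithmetic bookkeeping, rather than any of the preceding steps, to be where the proof's difficulty concentrates.
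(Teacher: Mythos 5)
Your treatment of dimension, flatness, and the failure of Cohen--Macaulayness is correct and essentially identical to the paper's: the same torsion-free/integrality argument, the same Hochschild--Serre spectral sequence for the free action on the punctured neighborhood, and the same trace-zero class (the element $i$) detected by restriction to the fixed point. Your computation that $\sigma$ fixes the frame $\frac{dx}{x}\wedge\frac{dy}{y}\wedge\frac{di}{i}\cdot f$ of $\omega_{Y/\Z_2}$, so that $K_{Y/G}$ is Cartier, also matches the paper.

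The gap is in the terminality argument, which you do not actually carry out: you reduce it to ``extract each divisor $E$ over $\bar P$ on a $\sigma$-equivariant resolution and compute its discrepancy,'' and then explicitly defer the ``exact wild-ramification/arithmetic bookkeeping'' as the unfinished crux. The missing idea that makes this tractable in the paper is Theorem \ref{mup} (the $\Z/p$--$\mu_p$ switch): after a \emph{single} $G$-equivariant blow-up $Y_1\to Y_0$ at the fixed point, the fixed point scheme becomes the Cartier divisor $E_0\cong\P^2_{\F_2}$ except at $7$ points, so $Y_1/G$ is regular away from those points by Kir\'aly--L\"utkebohmert, and at each of the $7$ points the hypotheses $I(e)=e^2(\text{unit})$, $I(x_i)\in e\m$, $2\in e\m$ hold, so Theorem \ref{mup} identifies the singularity as a toric $\frac{1}{2}(1,1,1)$ point --- terminal by Reid--Tai with no wild bookkeeping needed. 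The descent of discrepancies is then a one-line computation using that $Y_1\to Y_1/G$ is \emph{fiercely} ramified along $E_0$ (ramification index $1$, so $f^*F_0=E_0$, and different $(p-1)E_0=E_0$), giving $K_{Y_1/G}=\pi^*K_{Y_0/G}+F_0$ with coefficient $1>0$; there is no borderline cancellation of the kind you anticipate. Note also that calling the codimension-one ramification ``wild'' is the wrong case in the Xiao--Zhukov dichotomy: wild ramification would give $f^*F_0=2E_0$ and would change the discrepancy arithmetic, so distinguishing fierce from wild is not optional here. Without Theorem \ref{mup} (or an explicit full resolution, which the paper notes requires a further blow-up at each of the $7$ points), your outline does not yet constitute a proof of terminality.
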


Note that an action of a $p$-group with an isolated fixed point
on a positive-dimensional smooth variety in characteristic $p$
is never formally isomorphic to a linear action,
because a nonzero representation $V$ of a $p$-group $G$ in characteristic $p$
has nonzero $G$-fixed subspace.
In fact, there are continuous families of non-equivalent actions
on smooth varieties in characteristic $p$,
and likewise on regular schemes of mixed characteristic.
For an action of $G=\Z/p$
on a regular scheme $W$ of dimension 3
with an isolated fixed point in characteristic $p$ (as in Theorem
\ref{mainintro}), it is common
for $W/G$ not to be Cohen-Macaulay, essentially because the cohomology
of $G$ contributes to the local cohomology of $W/G$. The difficulty
is to construct an example with $W/G$ terminal. For a more complicated
action of $G$, the quotient scheme
would usually not be terminal
or even log canonical. To find the examples in this paper,
the idea was to look for the simplest possible actions
of $G=\Z/p$ on a regular 3-dimensional scheme
with an isolated fixed point of residue characteristic $p$. 

Our examples build on
Artin's examples of the simplest $\Z/p$-actions on smooth surfaces
in characteristic $p$ with isolated fixed points \cite{Artin}.
Namely, he constructed
a $\Z/2$-action in characteristic 2 with quotient a du Val singularity
of type $D_4$,
a $\Z/3$-action in characteristic 3 with quotient an $E_6$ singularity,
and a $\Z/5$-action in characteristic 5 with quotient an $E_8$
singularity. These special group actions arise globally from actions
on del Pezzo surfaces, for example $\Z/5$ acting on the quintic
del Pezzo surface (as discussed in section \ref{char5}).

To show that our 3-dimensional quotients $W/G$ are terminal,
the obvious approach would be to resolve the singularities
of $W/G$ and make a calculation. Resolving these singularities
is hard, however.
We can greatly simplify the work by stopping at a partial resolution
of $W/G$ that has toric singularities (specifically,
$\mu_p$-quotient singularities, which we call
tame quotient singularities);
those are easy to analyze in combinatorial terms.
(Recent advances suggest that an efficient
substitute for resolving singularities in any characteristic
would be to seek a resolution by a tame stack, rather than
by a regular scheme \cite{ATW,McQuillan,AOV}.) Our key technical tool
is Theorem \ref{mup},
which gives a sufficient condition
for a quotient scheme $U/G$ (where $G=\Z/p$,
in positive or mixed characteristic) to have toric singularities.

These examples should lead to other failures of vanishing theorems.
In particular, by Baudin, Bernasconi, and Kawakami,
these examples imply that Frobenius-stable Grauert-Riemenschneider
vanishing fails for terminal 3-folds
in characteristic 2, 3, or 5 \cite[Remark 1.2]{BBK}.

This work was supported by NSF grant DMS-2054553,
Simons Foundation grant SFI-MPS-SFM-00005512,
and the Charles Simonyi Endowment
at the Institute for Advanced Study.

\section{Notation}

We use the notation ``$x=y+I$'', for elements
$x$ and $y$ of a ring and an ideal $I$, to mean that there is an $i\in I$
such that $x=y+i$. We also use variants of this notation such as
``$x=(y+I)(z+J)$''. Another variant (modeled on big-O notation
in analysis) is to write ``$x=y+O(z)$''
for ``$x=y+(z)$''.

We write $R\{ x_1,\ldots,x_n\}$ for
the free module over a ring $R$ with basis elements $x_1,\ldots,x_n$.

For a closed point $P$ in a regular scheme $U$ with residue field
$k_U$, we say that $f_1,\ldots,f_n$
are {\it coordinates }for $U$ at $P$ (or a {\it regular system
of parameters}) if $f_1,\ldots,f_n$ are elements
of the maximal ideal $\m$ of $O(U)$ (the regular functions vanishing at $P$)
that map to a basis for the $k_U$-vector space $\m/\m^2$.

For a group $G$ acting on a scheme $X$, $G$ acts on the ring of regular
functions $O(X)$ by $(g(f))(x)=f(g^{-1}x)$, or equivalently
$g(f)=(g^{-1})^*(f)$. The inverse is needed
because of our convention of writing group actions on the left.
Throughout the paper, we write $G=\Z/p=\langle \sigma:\sigma^p=1\rangle$
for the cyclic group of prime order $p$. We fix the name $\tau=\sigma^{-1}$,
because of the inverse that comes up in writing the $G$-action
on functions. Write $I(f)=\sigma(f)-f$ for a function $f$ on a $G$-scheme.

See section \ref{Z2} for the definition of the canonical divisor
and terminal singularities
on general schemes, following
\cite[section 2.1]{Kollarsings}.

For a positive integer $r$,
let $\mu_r$ be the group scheme (over any base scheme)
of $r$th roots of unity.
The Reid-Tai criterion is the following description
of which cyclic quotient singularities are canonical or terminal
\cite[Theorem 4.11]{Reid}. 
This is often stated over a field, but it works
even in mixed characteristic for $\mu_r$-quotient singularities.
The point is that
Kato's theory of log regular schemes provides a mixed-characteristic
analog of toric singularities, which includes the case of
$\mu_r$-quotient singularities \cite{Kato}.
For such schemes, resolutions of singularities
and the canonical divisor can be described in purely combinatorial terms.
In the following criterion, for $a$ an integer and $b$ a positive integer,
consider $a\bmod b$ as an integer in the set $\{0,\ldots,b-1\}$.

\begin{theorem}
\label{reidtai}
For a positive integer $r$, let $\mu_r$
act on a regular scheme $X$, fixing a closed point
$P$ with maximal ideal $\m$. Suppose that $\mu_r$ acts on a basis
for $\m/\m^2$
by $\zeta(t_1,\ldots,t_n)=(\zeta^{b_1}t_1,\ldots,
\zeta^{b_n}t_n)$, for some $b_1,\ldots,b_n\in \Z/r$.
(The quotient $X/\mu_r$ is said to be a $\mu_r$-quotient
singularity of type $\frac{1}{r}(b_1,\ldots,b_n)$.) Assume that the action
is well-formed in the sense that $\gcd(r,b_1,\ldots,\widehat{b_j},
\ldots,b_n)=1$ for all $j=1,\ldots,n$. Then $X/\mu_r$ is canonical
(resp.\ terminal) near the image of $P$ if and only if
$$\sum_{j=1}^n ib_j\bmod r\geq r$$
(resp.\ $>r$) for all $i=1,\ldots,r-1$.
\end{theorem}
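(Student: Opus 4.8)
The plan is to realize $X/\mu_r$, near the image of $P$, as a toric (more precisely log-regular) singularity and then run the standard toric discrepancy computation, which is uniform across characteristics. First I would set up the combinatorial data. After completing at $P$ and choosing the eigencoordinates $t_1,\ldots,t_n$ on $\m/\m^2$, the $\mu_r$-action is diagonal with weights $b_1,\ldots,b_n$, so the quotient is the affine toric singularity attached to the overlattice $N'=\Z^n+\Z\cdot\tfrac1r(b_1,\ldots,b_n)$ together with the standard simplicial cone $\sigma=\mathrm{Cone}(e_1,\ldots,e_n)$, where $e_1,\ldots,e_n$ is the standard basis of $N=\Z^n$. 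In mixed characteristic this is not literally a toric variety over a field, but Kato's theory of log regular schemes supplies the same combinatorial package --- cone, fan subdivisions, and a canonical divisor expressible through the ray generators --- so that every step below is purely lattice-theoretic. The well-formedness hypothesis is used exactly here: it guarantees that $e_1,\ldots,e_n$ remain primitive generators of their rays in $N'$ and that the quotient map has no quasi-reflections in codimension one, so that $K_{X/\mu_r}$ corresponds to $-\sum_i D_i$ on the nose.

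Next I would compute discrepancies on a toric resolution. Subdividing $\sigma$, every exceptional prime divisor $E_w$ corresponds to a primitive lattice vector $w\in\sigma\cap N'$ not lying on a ray of $\sigma$. Because $\sigma$ is simplicial with generators $e_i$, the linear functional $\phi=e_1^\ast+\cdots+e_n^\ast$ satisfies $\langle\phi,e_i\rangle=1$ for all $i$, and the standard toric formula for the discrepancy of $E_w$ relative to $K=-\sum_i D_i$ gives $a(E_w)=\langle\phi,w\rangle-1=\big(\sum_{j=1}^n w_j\big)-1$, where $w_j$ are the coordinates of $w$ in the basis $e_j$. By Reid's description of toric discrepancies, $X/\mu_r$ is canonical (resp.\ terminal) near the image of $P$ if and only if $\sum_j w_j\geq 1$ (resp.\ $>1$) for every such exceptional $w$; it suffices to test toric valuations because the infimum of discrepancies over all divisors is attained among them.

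Finally I would reduce this infinite test to the $r-1$ vectors in the statement. The quotient $N'/N$ is cyclic of order $r$, generated by the class of $\tfrac1r(b_1,\ldots,b_n)$, so each nonzero class has a unique representative in the half-open box $[0,1)^n$, namely $v_i=\tfrac1r\big((ib_1\bmod r),\ldots,(ib_n\bmod r)\big)$ for $i=1,\ldots,r-1$. Any $w\in\sigma\cap N'$ in the class of $i$ satisfies $w=v_i+\sum_j\lfloor w_j\rfloor e_j$ with $\lfloor w_j\rfloor\geq 0$, so $\sum_j w_j\geq\sum_j (v_i)_j$, with the box point $v_i$ minimizing the coordinate sum; the integer class $i=0$ contributes only $0$ and the ray generators $e_j$ (whose discrepancy $0$ is irrelevant, as they are not exceptional). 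Hence the canonical (resp.\ terminal) condition is equivalent to $\sum_j (v_i)_j\geq 1$ (resp.\ $>1$) for all $i$, which after clearing the denominator $r$ is precisely $\sum_{j=1}^n (ib_j\bmod r)\geq r$ (resp.\ $>r$).

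The step I expect to be the main obstacle is the first one: justifying that the mixed-characteristic quotient $X/\mu_r$ genuinely carries the toric combinatorics, so that its resolutions and canonical divisor are governed by the fan of $\sigma$ in $N'$ exactly as over a field. Once Kato's log-regular framework is in place this becomes formal, but it is what allows the Reid--Tai formula to survive outside the equicharacteristic setting; the remaining discrepancy bookkeeping is elementary lattice geometry.
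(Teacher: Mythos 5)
Your proposal is correct and follows essentially the same route as the paper, which states Theorem \ref{reidtai} by citing Reid's toric discrepancy computation and invoking Kato's theory of log regular schemes to transport the cone/fan combinatorics and the canonical divisor to mixed characteristic --- exactly the two ingredients you identify. The lattice argument you write out (primitivity of the $e_j$ from well-formedness, the discrepancy functional $\phi=\sum e_j^*$, and the reduction to the box representatives $v_i$) is Reid's proof, and your flagging of the mixed-characteristic justification as the only nonformal step matches the paper's own remark.
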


We sometimes need the extension of the Reid-Tai criterion
that describes when a toric pair is terminal, as follows.
The proof is the same as Reid's:
$X/\mu_r$ has a toric resolution of singularities, and so it suffices
to compute discrepancies for toric divisors
over $X/\mu_r$.

\begin{theorem}
\label{reidtaipair}
Under the assumptions of Theorem \ref{reidtai}, let $t_1,\ldots,t_n$
be coordinates for $X$ at $P$ that are $\mu_r$-eigenfunctions with weights
$b_1,\ldots,b_n$. For $i=1,\ldots,n$, let $D_i$ be the irreducible divisor
on $X/\mu_r$ that is the image of $\{t_i=0\}$ in $X$.
Let $c_1,\ldots,c_n$ be real numbers. Then the pair
$(X/\mu_r,\sum c_j D_j)$ is terminal near the image of $P$
if and only if $c_j<1$ for each $j$,
$c_j+c_k<1$ for each $j\neq k$, and
$$\sum_{j=1}^n (1-c_j)(ib_j\bmod r)> r$$
for all $i=1,\ldots,r-1$.
\end{theorem}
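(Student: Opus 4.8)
The plan is to reduce the statement to a combinatorial computation on the toric cone attached to the $\mu_r$-quotient singularity, exactly as in Reid's proof of Theorem~\ref{reidtai}. By Kato's theory of log regular schemes (invoked before Theorem~\ref{reidtai}), $X/\mu_r$ is a toric singularity even in mixed characteristic: it is the affine toric variety of the cone $C=\R^n_{\geq 0}$ in the lattice $N=\Z^n+\Z\cdot\frac1r(b_1,\ldots,b_n)$, and it admits a toric resolution. The eigen-coordinates $t_1,\ldots,t_n$ identify each $D_j$ with the toric boundary divisor $E_{e_j}$ attached to the ray through the standard basis vector $e_j$, which is primitive in $N$ precisely because the action is well-formed; thus $(X/\mu_r,\sum_j c_j D_j)$ is a toric pair. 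For such a pair it suffices to compute discrepancies of toric divisors: for a primitive $v\in C\cap N$ the divisor $E_v$ has discrepancy $\Psi(v)-1$, where $\Psi$ is piecewise linear, linear on $C$ with $\Psi(e_j)=1-c_j$, i.e. $\Psi(x)=\sum_j(1-c_j)x_j$. (Theorem~\ref{reidtai} is the case $c_j=0$, where $\Psi(x)=\sum_j x_j$.)

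With this dictionary, terminality of the pair becomes the requirement that $\Psi(e_j)=1-c_j>0$ for each boundary ray (the log discrepancy along each component $D_j$ must be positive, which is the condition $c_j<1$), together with $\Psi(v)>1$ for every primitive $v\in C\cap N$ whose ray is not one of the $\R_{\geq 0}e_j$ — these are exactly the toric divisors exceptional over $X/\mu_r$. For necessity I would exhibit the two families of binding divisors. Taking $v=e_j+e_k$ (the primitive generator of the relative interior of the two-dimensional face $\langle e_j,e_k\rangle$, whose center is the codimension-two stratum $D_j\cap D_k$) gives $\Psi(v)-1=1-c_j-c_k$, forcing $c_j+c_k<1$. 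Taking the fractional points $v_i=\frac1r(ib_1\bmod r,\ldots,ib_n\bmod r)$ for $i=1,\ldots,r-1$ gives $\Psi(v_i)=\frac1r\sum_j(1-c_j)(ib_j\bmod r)$, so $\Psi(v_i)>1$ is precisely the displayed inequality $\sum_j(1-c_j)(ib_j\bmod r)>r$.

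For sufficiency I would use the decomposition coming from $N/\Z^n\cong\Z/r$: every $v\in C\cap N$ is uniquely $v=v_i+m$ with $i\in\{0,\ldots,r-1\}$ and $m\in\Z^n_{\geq 0}$, where $v_i\in[0,1)^n$ represents the class $v\bmod\Z^n$. Since each $1-c_j>0$, we get $\Psi(v)=\Psi(v_i)+\sum_j(1-c_j)m_j\geq\Psi(v_i)$. If $i\neq 0$ then $\Psi(v)\geq\Psi(v_i)>1$ by the main inequality. If $i=0$ then $v=m$, and (using that well-formedness makes the $e_j$ the only primitive vectors on the coordinate rays) a primitive $v$ off the rays has at least two nonzero coordinates, so $\Psi(v)=\sum_j(1-c_j)m_j\geq\sum_{j\in\mathrm{supp}(m)}(1-c_j)>1$; the last inequality follows from the pairwise conditions, since summing $c_j+c_k<1$ over all pairs in a support $S$ of size $s\geq 2$ gives $\sum_{j\in S}c_j<s/2\leq s-1$, i.e. $\sum_{j\in S}(1-c_j)>1$. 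Hence every exceptional toric divisor has positive discrepancy and the pair is terminal.

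The one genuinely delicate point is the toric dictionary itself in mixed characteristic: that $X/\mu_r$ is governed by the cone $(C,N)$, that its discrepancies are computed by toric divisors via $a(E_v)=\Psi(v)-1$, and that the canonical divisor and resolutions make sense combinatorially. This is exactly what Kato's log regular schemes supply, and it is the same input already underlying Theorem~\ref{reidtai}; granting it, the argument above is a routine refinement of Reid's, the only extra bookkeeping being to separate the boundary rays (yielding $c_j<1$) from the interior-of-face directions (yielding the pairwise and main inequalities).
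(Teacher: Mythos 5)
Your combinatorial work is correct and follows exactly the route the paper intends: the paper's entire proof of this statement is the sentence preceding it (take a toric resolution and compute discrepancies of toric divisors), and your dictionary $N=\Z^n+\Z\cdot\frac1r(b_1,\ldots,b_n)$, $C=\R^n_{\geq 0}$, $a(E_v)=\Psi(v)-1$ with $\Psi(e_j)=1-c_j$, the decomposition $v=v_i+m$, and the derivation of $\sum_{j\in S}(1-c_j)>1$ from the pairwise inequalities are all fine.

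There is, however, a genuine gap at the last step, ``hence every exceptional toric divisor has positive discrepancy and the pair is terminal,'' together with a related imprecision in your necessity argument for $c_j<1$. For \emph{pairs} it is false that terminality is detected by toric divisors alone. Take $(A^2, D_1-D_2)$ with $D_1=\{x=0\}$, $D_2=\{y=0\}$: every toric exceptional divisor $E_{(a,b)}$ with $a,b\geq 1$ has log discrepancy $(1-1)a+(1+1)b=2b\geq 2$, yet blowing up a point of $D_1\setminus D_2$ gives an exceptional divisor of discrepancy $1-c_1=0$, so the pair is not terminal. This is exactly where the condition $c_j<1$ lives: it is the discrepancy of a \emph{non-toric} divisor (the blow-up of a general codimension-two center inside a single $D_j$), not ``the log discrepancy along the component $D_j$'' (which is a condition on a non-exceptional divisor and is not part of the definition of terminal); and it is invisible to the fan. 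Symmetrically, in the sufficiency direction you have only verified the toric exceptional divisors, so you still owe a bound on all the non-toric ones. The standard way to close this is to pass to a toric log resolution $Z\to X/\mu_r$, on which the strict transforms $\widetilde{D}_j$ (coefficient $c_j<1$) and the toric exceptionals $E_i$ (coefficient $1-\Psi(v_i)<0$) form a simple normal crossings divisor, and to invoke the snc discrepancy formula (Koll\'ar--Mori, Corollary 2.31): every divisor exceptional over $Z$ has discrepancy at least $\min\bigl(1,\min_\rho(1-d_\rho),\min_{\rho\neq\rho'}(1-d_\rho-d_{\rho'})\bigr)$, and each of these quantities is positive precisely because of $c_j<1$, $c_j+c_k<1$, and $\Psi(v_i)>1$. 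With that supplement your argument is complete; note that the paper's own one-sentence proof elides the same point.
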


\section{Recognizing $\mu_p$-quotient singularities}

Kir\'aly and L\"utkebohmert analyzed when a quotient scheme 
by $\Z/p$ is regular, the hard case being where the residue
characteristic is $p$ \cite{KL}. More generally, we now give
a sufficient condition for a quotient by $\Z/p$
to be a toric singularity, or in particular to be
a $\mu_p$-quotient singularity. (Actions of $\mu_p$ are far simpler
than actions of $\Z/p$:
they are linearizable near a fixed point, like finite group
actions in characteristic zero, because $\mu_p$ is linearly
reductive. Also, $\mu_p$-quotient singularities are always
Cohen-Macaulay.)

It would be very appealing to find a broader sufficient condition
for a quotient by $\Z/p$ to have toric singularities,
perhaps necessary and sufficient.
Theorem \ref{mup} is adapted to the situation where
the fixed point scheme is a Cartier divisor $E_1=\{e=0\}$
except on a closed subset of $E_1$.
When it applies, the theorem can be described as a $\Z/p$ -- $\mu_p$ switch.

Here is Kir\'aly and L\"utkebohmert's main result \cite[Theorem 2]{KL}.
(See Theorem \ref{klnormal} for a more detailed statement.)

\begin{theorem}
\label{kl}
Let $G$ be a cyclic group of prime order which acts on a regular scheme
$X$. If the fixed point {\it scheme }$X^G$ is a Cartier divisor in $X$, then
the quotient space $X/G$ is regular.
\end{theorem}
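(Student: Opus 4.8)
The statement is local, so the plan is to localize and complete at a closed fixed point $P$, replacing $X$ by $R=\widehat{\mathcal{O}}_{X,P}$ with maximal ideal $\m$; since regularity is detected by completion, it suffices to prove that the invariant ring $R^{G}$ is regular. In this language $X^{G}$ is cut out by the ideal generated by all $I(f)=\sigma(f)-f$ for $f\in R$, and the hypothesis is that this ideal is principal, say $(e)$ with $e$ a nonzerodivisor. I would first dispose of the tame case, where the residue characteristic is not $p$. There $\sigma$ acts semisimply on the cotangent space $\m/\m^{2}$ with eigenvalues $p$th roots of unity, the fixed scheme is smooth with tangent space $(\m/\m^{2})^{\sigma}$, and requiring it to have codimension $1$ forces $\sigma$ to act as a pseudoreflection. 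The quotient is then a $\mu_{p}$-quotient singularity of type $\frac1p(0,\ldots,0,a)$ with $\gcd(a,p)=1$, which is regular by Theorem \ref{reidtai} (equivalently because $A^{1}/\mu_{p}\cong A^{1}$).

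The wild case, where the residue characteristic equals $p$, is the crux, and its first steps are characteristic-free. Since every $I(f)$ lies in $(e)$ and $e$ is a nonzerodivisor, I would introduce $D\colon R\to R$, $D(f)=I(f)/e$, and record that it is a $\sigma$-twisted derivation,
\[
D(fg)=\sigma(f)\,D(g)+g\,D(f),
\]
with $R^{G}=\ker I=\ker D$. The elements $I(f)$ generate $(e)$, so the $D(f)$ generate the unit ideal; as $R$ is local, some $D(t)$ is a unit, and for this $t$ we have $\sigma(t)=t+e\cdot(\mathrm{unit})$. Since $G$ acts faithfully, $t$ generates the degree-$p$ Galois extension $\operatorname{Frac}(R)/\operatorname{Frac}(R^{G})$, with minimal polynomial $\prod_{i=0}^{p-1}(T-\sigma^{i}(t))$ whose coefficients lie in $R^{G}$.

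To produce a regular system of parameters for $R^{G}$, the goal is then to straighten the action: to choose coordinates $t=t_{1},t_{2},\ldots,t_{n}$ for $R$ in which $t_{2},\ldots,t_{n}$ are invariant and $e$ is a function of $t_{2},\ldots,t_{n}$ alone, and to form a suitable invariant function of $t_{1}$, for instance its norm $N(t_{1})=\prod_{i=0}^{p-1}\sigma^{i}(t_{1})$. The candidate regular parameters for $R^{G}$ are then the $n$ elements $t_{2},\ldots,t_{n},N(t_{1})$; in the model $\sigma(x)=x$, $\sigma(y)=y+x$ in characteristic $p$ one computes $R^{G}=k[[\,x,\ y^{p}-x^{p-1}y\,]]$, which is the shape to aim for. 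I expect these $n$ elements to form a system of parameters for $R$, so that $R$ — hence also $R^{G}$ — is module-finite over the regular subring $S$ they generate; the identification $S=R^{G}$, and thus the regularity of $R^{G}$, would then follow from a normality comparison, both rings being normal with the same fraction field.

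The main obstacle is precisely this last structural step in the wild case: arranging coordinates so that the action is triangular, with only $t_{1}$ moving and the fixed divisor $e$ supported in the invariant directions, and then verifying that the proposed invariants generate $R^{G}$ (equivalently, are independent in $\m_{R^{G}}/\m_{R^{G}}^{2}$). The difficulty is that $\sigma$ now acts only unipotently on $\m/\m^{2}$, so there is no eigenbasis, and one must straighten the action order by order using $D$ together with the semi-invariance $\sigma(e)=ue$; the sub-case $e\in\m^{2}$, in which $\sigma$ is trivial on the cotangent space and the fixed divisor is itself singular, is the most delicate. Mixed characteristic compounds this, since the relation $\sigma^{p}=1$ now involves $p\neq0$ interacting with $e$, so both the normal form and the correct invariant replacing $N(t_{1})$ must be derived rather than read off from the equal-characteristic model.
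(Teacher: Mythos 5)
Your reduction to the complete local ring, your handling of the tame case, and your introduction of the twisted derivation $D(f)=I(f)/e$ with the resulting element $t$ satisfying $I(t)=e\cdot(\text{unit})$ are all sound, and they match the opening moves of Kir\'aly--L\"utkebohmert's argument (which the paper cites rather than reproves; see Theorem \ref{klnormal}). But the proposal stops exactly where the real work begins: you say yourself that producing invariant coordinates $t_2,\ldots,t_n$ complementary to $t$, and verifying that $t_2,\ldots,t_n,N(t)$ generate $\m_{R^G}$, is ``the main obstacle,'' and you offer only the hope of straightening the action ``order by order using $D$.'' That plan does not work as stated in the wild case: the obstruction to normalizing the action of $\sigma$ on successive graded pieces of $R/\m^k$ lives in $H^1(\Z/p,-)$ of those pieces, which is typically nonzero in characteristic $p$, so there is no reason an order-by-order correction terminates or converges. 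Your auxiliary normalization that $e$ be expressible in $t_2,\ldots,t_n$ alone is moreover neither needed nor in general achievable.

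The missing idea is the freeness statement recorded as Theorem \ref{klnormal}(1): under the hypothesis that $B\cdot I(B)$ is principal, $B$ is free of rank $p$ over $A=B^G$ with basis $1,t,\ldots,t^{p-1}$ for any $t$ such that $I(t)$ generates that ideal. This single fact does all the remaining work. It shows $A$ is a direct summand of $B$, hence noetherian with $B$ module-finite over it (facts that your completion step and your closing ``normality comparison'' both silently require); it produces the invariant coordinates by projecting arbitrary generators of $\m_B$ onto their $A$-components with respect to this basis, which is exactly how the paper proves Theorem \ref{klnormal}(3); and it identifies $A$ inside $B$ concretely enough to count generators of $\m_A$ and conclude $\m_A=(N(t),t_2,\ldots,t_n)$, whence $A$ is regular. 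Without proving this freeness (or an equivalent substitute), the proposal is an accurate description of where the difficulty of Theorem \ref{kl} lies rather than a proof of it.
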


Here is our sufficient condition for toric singularities. Recall
that $I(f)$ means $\sigma(f)-f$, for a function $f$ on a scheme with
an action of the group $G=\Z/p=\langle \sigma:\sigma^p=1\rangle$.

\begin{theorem}
\label{mup}
Let $U$ be a regular scheme with an action of the group
$G=\Z/p=\langle \sigma:\sigma^p=1\rangle$, for a prime number $p$.
Suppose that $G$ fixes a closed point $P$ with perfect
residue field $k_U$ of characteristic $p$. Write $\m$
for the maximal ideal in the local ring $O_{U,P}$. Suppose that
there are $e,s\in \m-\{0\}$ and coordinates $x_1,\ldots,x_n$
for $U$ at $P$ such that
$$I(s)=es(\text{unit})$$
and
$$I(x_i)\in (e)$$
for $i=1,\ldots,n$. Suppose that $p\in e^{p-1}\m$. (For example,
that holds if $p=0$ on $U$.) Then $U/G$ is regular or has a $\mu_p$-quotient
singularity at the image of $P$.

More precisely, if $I(x_i)/e$ is nonzero at $P$ for some $i$,
then $U/G$ is regular near $P$. Otherwise,
let $\varphi(y)=I(y)/e$, which gives
a linear map from $\m/\m^2$ to itself. Then $\varphi$
is diagonalizable, and after multiplying it by some nonzero
scalar, its eigenvalues $b_1,\ldots,b_n$ are in $\F_p\subset k_U$.
In this case, $U/G$ has a singularity of the form
$\frac{1}{p}(b_1,\ldots,b_n)$.
That is,
near the image of $P$, $U/G$ is the quotient of a regular scheme
by an action of $\mu_p$ with
weights $b_1,\ldots,b_n$
at a fixed point.
\end{theorem}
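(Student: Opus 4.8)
The plan is to separate a regular branch from the genuinely singular branch, and in the latter to carry out the ``$\Z/p$--$\mu_p$ switch'' by linearizing the twisted derivation $\Phi(f):=I(f)/e$. First I would pass to the complete local ring $\widehat{O}_{U,P}$, a regular (hence integral) domain with perfect residue field $k_U$ of characteristic $p$; since $e,s\in\m-\{0\}$ are nonzerodivisors, dividing by $e$ is harmless. The twisted Leibniz rule $I(fg)=I(f)\sigma(g)+fI(g)$ shows that the fixed ideal, generated by all $I(f)$, is contained in $(e)$: this holds on the $x_i$ and on $s$ by hypothesis, and propagates to products, hence to all of $\widehat{O}_{U,P}$. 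Thus the fixed scheme $U^G$ is cut out by an ideal inside $(e)$. If some $I(x_i)/e$ is a unit, then $I(x_i)=e\cdot(\text{unit})$, so $e$ itself lies in the fixed ideal, which therefore equals $(e)$; the fixed scheme is the Cartier divisor $\{e=0\}$, and Theorem~\ref{kl} of Kir\'aly--L\"utkebohmert gives that $U/G$ is regular near $P$. This is the easy branch.

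Now assume every $I(x_i)/e\in\m$, so $\Phi$ carries $\m$ into $\m$; the twisted Leibniz rule then gives $\Phi(\m^2)\subseteq\m^2$, and $\varphi:=\Phi\bmod\m^2$ is a well-defined endomorphism of $\m/\m^2$. The central computation is to extract, from the group relation $\sigma^p=\id$, a separable polynomial satisfied by $\varphi$. Writing $\sigma=1+I$ and expanding, $0=\sigma^p-1=\sum_{k=1}^p\binom{p}{k}I^k$, where every $\binom{p}{k}$ with $1\le k\le p-1$ is divisible by $p$. Substituting $I=M_e\Phi$ (multiplication by $e$ followed by $\Phi$) and repeatedly using the commutation identity $\Phi\,M_e=M_{\Phi(e)}\sigma+M_e\Phi$ to move the factors of $e$ to the left, I would track the leading $e$-order of each term: $I^p$ contributes $e^p\Phi^p$, while $pI$ contributes $p\,e\,\Phi$, and the hypothesis $p\in e^{p-1}\m$ makes this latter term also of order $e^p$, whereas every intermediate $\binom{p}{k}I^k$ with $2\le k\le p-1$ is of strictly higher $e$-order. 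A careful analysis of the remaining corrections (each carrying a factor $\Phi(e)\in\m$) then collapses the identity, modulo $\m^2$, to a relation $\varphi^p=c\,\varphi$ in $\operatorname{End}(\m/\m^2)$ for a scalar $c\in k_U$. Making this bookkeeping airtight is delicate, and is the technical core of the step.

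From $\varphi^p=c\varphi$ the rest is formal once we know $c\neq0$. Here the given $s$ is decisive: $\Phi(s)=s\cdot(\text{unit})$, so if $s\notin\m^2$ its class is an eigenvector of $\varphi$ with eigenvalue $u(P)\neq0$, where $u$ is the unit in $I(s)=es\,(\text{unit})$. Substituting this eigenvalue into $\varphi^p=c\varphi$ forces $c=u(P)^{p-1}\neq0$. Then $T^p-cT=T\prod_{j\in\F_p^\times}(T-u(P)j)$ is separable and splits over $k_U$ (its nonzero roots are the $u(P)j$ with $j\in\F_p^\times\subseteq k_U$), so $\varphi$ is diagonalizable with eigenvalues in $u(P)\cdot\F_p$; dividing by the scalar $u(P)$ places the weights $b_1,\dots,b_n$ in $\F_p$, as claimed.

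It remains to realize $U/G$ as a $\mu_p$-quotient of type $\tfrac1p(b_1,\dots,b_n)$, and this is where I expect the main obstacle to lie. The key observation is that $O(U/G)=\ker(I)=\ker(\Phi)$ near $P$, since $e$ is a nonzerodivisor. Choosing coordinates $t_1,\dots,t_n$ lifting an eigenbasis of $\varphi$ and letting $\mu_p$ act on the same regular scheme $\operatorname{Spec}\widehat{O}_{U,P}$ with weight $b_i$ on $t_i$ defines a $\Z/p$-grading whose weight-$0$ subring is $\widehat{O}^{\mu_p}$, the candidate for $O(U/G)$. The difficulty is the exact comparison of these two invariant rings: a priori $\Phi$ is only approximately the Euler operator $u(P)\sum_i b_i t_i\,\partial_{t_i}$, so $\ker\Phi$ need not visibly coincide with the weight-$0$ subring. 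I would overcome this by adjusting the $t_i$ order by order to make $\Phi$ genuinely diagonal, solving the successive linear equations using perfectness of $k_U$ and the smallness $p\in e^{p-1}\m$ to control the higher-order corrections and guarantee convergence. Once $\Phi$ is diagonalized, $\ker\Phi$ is spanned by the weight-$0$ monomials, which is exactly $\widehat{O}^{\mu_p}$, thereby exhibiting $U/G$ near the image of $P$ as the asserted $\mu_p$-quotient singularity.
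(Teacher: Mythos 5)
Your first branch (some $I(x_i)/e$ a unit $\Rightarrow$ $U^G$ is the Cartier divisor $\{e=0\}$ $\Rightarrow$ regularity via Kir\'aly--L\"utkebohmert) matches the paper. The fatal problem is your final step. You propose to put a linear $\mu_p$-action with weights $b_i$ on $\Spec \widehat{O}_{U,P}$ itself and to identify $\ker\Phi=O(U/G)$ with the weight-zero monomial subring after ``making $\Phi$ genuinely diagonal.'' This cannot work, because $I$ (hence $\Phi$) is a \emph{twisted} derivation: $I(fg)=I(f)\sigma(g)+fI(g)$. Even if you achieve $\sigma(t_i)=t_i(1+eb_i)$ exactly, then $\sigma(t^a)=t^a\prod_i(1+eb_i)^{a_i}$, and the invariance condition $\prod_i(1+eb_i)^{a_i}=1$ is strictly stronger than $\sum a_ib_i\equiv 0\pmod p$. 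For instance with $p=2$ and $b_1=b_2=1$, the monomial $t_1t_2$ has $\mu_2$-weight $0$ but $\sigma(t_1t_2)=t_1t_2(1+e)^2=t_1t_2(1+e^2)\neq t_1t_2$ in characteristic $2$. So $\widehat{O}^G$ and $\widehat{O}^{\mu_p}$ are genuinely different subrings of $\widehat{O}$, and no order-by-order change of coordinates fixes this. (There is also an obstruction to the diagonalization itself: $\sigma^p=1$ forces $N(1+eb_i)=1$, a norm condition that generic units of the form $1+eb_i$ do not satisfy.) This is exactly the difficulty the paper's proof is built to circumvent: it constructs a $\mu_p$-torsor $W\to U$ defined by $w^p=f$ for a carefully chosen unit $f$ with $\sigma(f)/f=1/g^p$, endows $W$ with a commuting $G$-action, proves the nontrivial fact that $Q:=W/G$ is regular (even though $W$ itself is usually not normal), and only then realizes $U/G=Q/\mu_p$. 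The regular scheme carrying the $\mu_p$-action is $Q$, not $U$, and the identification of weights goes through the relation $h_i=g_iw^{b_i}$ on $W$.

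Two secondary gaps: your derivation of $\varphi^p=c\varphi$ from $(1+I)^p=\mathrm{id}$ is only sketched, and the bookkeeping is not innocuous --- the correction terms such as $e\Phi(e)\sigma(\Phi(f))$ in $I^2(f)$ need not be divisible by $e^2$, so dividing the identity by $e^p$ and reducing mod $\m^2$ requires a real argument (the paper avoids this entirely; diagonalizability falls out at the end from $\varphi(g_i)=b_ig_i$ for the coordinates $g_i$ produced by the torsor construction). Also, your proof that $c\neq 0$ uses that the class of $s$ in $\m/\m^2$ is a nonzero eigenvector, but the hypothesis is only $s\in\m-\{0\}$; if $s\in\m^2$ your argument for separability of the minimal polynomial has no input.
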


Theorem \ref{mup-divisor} is a refinement of Theorem \ref{mup},
showing that certain divisors in $U$
can be viewed as toric divisors in $U/G$.

We will often use the special case of Theorem \ref{mup}
where $e=s=x_1$ for one of the coordinates $x_1$; in that case,
we are assuming that
$I(x_1)=x_1^2(\text{unit})$ and $I(x_i)\in (x_1)$
for $i=1,\ldots,n$.
Then the fixed point
scheme $U^G$ is the regular divisor $\{ x_1=0\}$ except on a closed
subset of $E_1$. For other applications in this paper, we need
the greater generality of Theorem \ref{mup}, where $(U^G)_{\red}$
may be a singular divisor.

One might ask whether the assumption that $p\in e^{p-1}\m$
in Theorem \ref{mup}
can be omitted. Fortunately, this assumption is automatic
in characteristic $p$, and it will be easy to check
in our mixed-characteristic examples.

\begin{proof}
The assumption that $P$ is fixed by $G$ means that
$G$ preserves $\m=\m_U$ and acts as the identity on the residue
field $k_U:=O_{U,P}/\m$, which has characteristic $p$.
For each regular function $f$ in the local ring $O_{U,P}$,
its norm $N(f):=\prod_{i=0}^{p-1}\sigma^i(f)$
is $G$-invariant, and it maps to $f^p$ in $k_U$
by triviality of the $G$-action there. So the residue field $k_{U/G}
\subset k_U$ at the image of the point $P$
contains $k_U^p$. Since we assume that $k_U$ is perfect,
$k_{U/G}$ is equal to $k_U$.
In what follows, we replace
$U$ by a $G$-invariant open neighborhood of $P$ as needed,
since we are only
trying to describe $U/G$ near the image of $P$ (which we also
call $P$). In particular,
we can assume that $U$ is affine.

The fixed point scheme $U^G$
is defined as the closed subscheme of $U$ cut out by
the ideal generated by $I(O(U))$.
The following formulas hold for any action of $G$
on a commutative ring
\cite[Remark 3]{KL}:

\begin{lemma}
\label{product}
(1) $I(xy)=I(x)\sigma(y)+xI(y).$

(2) For $m\geq 0$, $I(x^m)=I(x)\sum_{i=1}^m \sigma(x)^{i-1}x^{m-i}.$
\end{lemma}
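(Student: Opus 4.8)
\noindent The proof is a direct computation from the definitions, using only that $\sigma$ is a ring homomorphism and that $I=\sigma-\id$.

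For (1), the plan is to expand $\sigma(xy)=\sigma(x)\sigma(y)$ and substitute $\sigma(x)=x+I(x)$, $\sigma(y)=y+I(y)$. Then
\[
I(xy)=\sigma(x)\sigma(y)-xy=\bigl(x+I(x)\bigr)\bigl(y+I(y)\bigr)-xy=xI(y)+I(x)y+I(x)I(y).
\]
The one point to notice is the regrouping $I(x)y+I(x)I(y)=I(x)\bigl(y+I(y)\bigr)=I(x)\sigma(y)$, which collects the two $I(x)$-terms and reproduces the asymmetric shape of the formula, giving $I(xy)=I(x)\sigma(y)+xI(y)$. This exhibits $I$ as a $\sigma$-twisted derivation (a Leibniz rule in which exactly one factor is hit by $\sigma$); keeping track of which factor carries $\sigma$ is the only thing to be careful about, and it reflects the multiplicativity of $\sigma$.

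For (2), I would give the one-line argument that $\sigma$ multiplicative yields $\sigma(x^m)=\sigma(x)^m$, so
\[
I(x^m)=\sigma(x)^m-x^m=\bigl(\sigma(x)-x\bigr)\sum_{i=1}^m\sigma(x)^{i-1}x^{m-i}=I(x)\sum_{i=1}^m\sigma(x)^{i-1}x^{m-i},
\]
using the elementary factorization $a^m-b^m=(a-b)\sum_{i=1}^m a^{i-1}b^{m-i}$ with $a=\sigma(x)$ and $b=x$ (the case $m=0$ being the empty sum, consistent with $I(1)=0$). Alternatively one can induct on $m$, applying part (1) to $x^m=x\cdot x^{m-1}$ and simplifying $\sigma(x^{m-1})=\sigma(x)^{m-1}$; the inductive step just peels off the top term $\sigma(x)^{m-1}$ of the sum. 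There is no real obstacle here: both parts are formal consequences of $\sigma$ being a ring homomorphism.
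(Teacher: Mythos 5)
Your computation is correct and complete: both identities are formal consequences of $\sigma$ being a ring homomorphism, and your regrouping in (1) and the factorization $a^m-b^m=(a-b)\sum_{i=1}^m a^{i-1}b^{m-i}$ in (2) are exactly the right steps. The paper itself gives no proof of this lemma, citing only \cite[Remark 3]{KL}, so your direct verification is a fine elementary substitute and there is nothing to compare against.
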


We are given coordinates $x_1,\ldots,x_n$ for $U$ near $P$.
Under our assumptions ($U$ regular and $k_{U/G}=k_U$),
the ideal generated by $I(O_{U,P})$ in $O_{U,P}$ is generated
by $I(x_1),\ldots,I(x_n)$ \cite[Proposition 6]{KL}. That is,
after shrinking $U$ around $P$ if necessary, the fixed point scheme
$U^G$ is the closed subscheme defined by $I(x_1),\ldots,I(x_n)$.

In particular, if $I(x_i)/e$ is a unit for some $i=1,\ldots,n$, then
the fixed point scheme $U^G$ is the Cartier divisor
$\{e=0\}$, and then Theorem \ref{kl} gives that $U/G$
is regular. {\it So we can assume from now on that $I(x_i)$ is in
$e\m$ for each $i$. }Equivalently, after shrinking $U$
around $P$, $I(O(U))$ is contained in $e\m$.
In this case, we will show that $U/G$
has a $\mu_p$-quotient singularity at $P$.

The following lemma is implicit in the statement
of Theorem \ref{mup}.

\begin{lemma}
\label{map}
Let $U$ be a regular scheme with an action of the group
$G=\Z/p=\langle \sigma:\sigma^p=1\rangle$.
Suppose that $G$ fixes a closed point $P$ with perfect
residue field $k_U$ of characteristic $p$. Write $\m$
for the maximal ideal in the local ring $O_{U,P}$. Let
$e\in \m$, $e\neq 0$, such that
$I:=\sigma-1$ satisfies $I(\m)\subset e\m$.
Then $\varphi(y):=I(y)/e$ is a well-defined $k_U$-linear
map from $\m/\m^2$ to itself.
\end{lemma}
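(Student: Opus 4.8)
The plan is to verify the three assertions in turn: that $\varphi$ is a well-defined map $\m\to\m$, that it descends to $\m/\m^2$, and that the induced map is $k_U$-linear. First, since $U$ is regular the local ring $O_{U,P}$ is a domain, so the nonzero element $e$ is a nonzerodivisor and division by $e$ is unambiguous on the ideal $(e)$. By hypothesis $I(\m)\subseteq e\m$, so for $y\in\m$ one may write $I(y)=e m$ with a unique $m\in\m$, and set $\varphi(y)=I(y)/e=m\in\m$. Because $I=\sigma-1$ is additive and division by $e$ is additive, $\varphi\colon\m\to\m$ is additive.

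Next I would show $\varphi(\m^2)\subseteq\m^2$, so that $\varphi$ induces a map $\bar\varphi\colon\m/\m^2\to\m/\m^2$. It suffices to treat $y=ab$ with $a,b\in\m$. By Lemma \ref{product}(1), $I(ab)=I(a)\sigma(b)+a\,I(b)$. Here $I(a),I(b)\in e\m$ and $\sigma(b)\in\m$ (as $G$ fixes $P$, it preserves $\m$), so $I(a)\sigma(b)\in e\m^2$ and $a\,I(b)\in e\m^2$; hence $I(ab)\in e\m^2$ and $\varphi(ab)\in\m^2$. Since every element of $\m^2$ is a finite sum of products $ab$ with $a,b\in\m$, additivity gives $\varphi(\m^2)\subseteq\m^2$.

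For $k_U$-linearity, recall that the scalar action of $\bar c\in k_U=O_{U,P}/\m$ on $\m/\m^2$ sends $\bar y$ to $\overline{cy}$. Using Lemma \ref{product}(1) again, for $c\in O_{U,P}$ and $y\in\m$,
$$\varphi(cy)=\frac{I(c)\sigma(y)+c\,I(y)}{e}=\frac{I(c)}{e}\,\sigma(y)+c\,\varphi(y).$$
The second term reduces modulo $\m^2$ to $\bar c\,\bar\varphi(\bar y)$, since changing the lift $c$ alters $c\,\varphi(y)$ by an element of $\m\cdot\m$. So everything comes down to showing that the first term lies in $\m^2$, i.e.\ that $I(c)/e\in\m$, i.e.\ that $I(O_{U,P})\subseteq e\m$. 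This is the main point, and the only place where more than the bare hypothesis $I(\m)\subseteq e\m$ is needed. I would obtain it as follows: triviality of the $G$-action on the residue field gives $I(O_{U,P})\subseteq\m$, and since $k_U$ is perfect we have $k_{U/G}=k_U$, so by Kir\'aly--L\"utkebohmert the ideal generated by $I(O_{U,P})$ is generated by $I(x_1),\dots,I(x_n)$ for coordinates $x_1,\dots,x_n\in\m$ \cite[Proposition 6]{KL}. As each $I(x_i)\in I(\m)\subseteq e\m$, the ideal they generate is contained in $e\m$, whence $I(c)\in e\m$ for every $c$. Granting this, $I(c)/e\in\m$ and $\sigma(y)\in\m$, so $(I(c)/e)\sigma(y)\in\m^2$ and $\bar\varphi(\overline{cy})=\bar c\,\bar\varphi(\bar y)$, proving $k_U$-linearity.

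The only real obstacle is this upgrade from $I(\m)\subseteq e\m$ to $I(O_{U,P})\subseteq e\m$; the rest is the product rule and bookkeeping with the $\m$-adic filtration. Perfectness of $k_U$ enters exactly here, through the identification $k_{U/G}=k_U$ that licenses the Kir\'aly--L\"utkebohmert description of the ideal generated by $I(O_{U,P})$.
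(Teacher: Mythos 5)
Your proof is correct, and its first two steps (well-definedness via the domain property of the regular local ring, and descent to $\m/\m^2$ via the product rule giving $I(\m^2)\subset e\m^2$) coincide with the paper's. Where you diverge is the $k_U$-linearity step. The paper's argument is shorter: by the product rule, $\varphi$ is linear over the invariant ring $(O_{U,P})^G$ (for $G$-invariant $c$ one has $I(cy)=cI(y)$ exactly, with no error term to control), and since $(O_{U,P})^G$ has residue field $k_{U/G}=k_U$, every scalar lifts to an invariant element and $k_U$-linearity follows. You instead take an arbitrary lift $c$ of a scalar and bound the error term $I(c)\sigma(y)/e$ by proving $I(O_{U,P})\subseteq e\m$, which you obtain from \cite[Proposition~6]{KL}. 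That works, but the appeal to Kir\'aly--L\"utkebohmert is heavier than necessary: the equality $k_{U/G}=k_U$ (which both arguments need, and which is where perfectness enters) already gives $O_{U,P}=(O_{U,P})^G+\m$, hence $I(O_{U,P})=I(\m)\subseteq e\m$ directly --- this is exactly the observation used in the paper's proof of Theorem~\ref{klnormal}(3). So your route is valid but trades the paper's exact cancellation for an estimate, and imports an external proposition where a one-line decomposition of the local ring would do.
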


\begin{proof}
Since the local ring $O_{U,P}$ is regular, it is a domain, and so $I(y)/e$
is well-defined for each element $y$ in $\m$.
Since $G$ fixes the point $P$, $G$ maps $\m$ into itself.
Since $I(\m)\subset e\m$, Lemma \ref{product} gives
that $I(\m^2)\subset e\m^2$. Since $I$ is additive,
it follows that $\varphi$ is a well-defined additive function
from $\m/\m^2$ to itself. By Lemma \ref{product}, $\varphi$ is linear
over the ring of invariants $(O_{U,P})^G$. Since that ring has residue
field $k_{U/G}=k_U$, $\varphi$ is $k_U$-linear.
\end{proof}

We are given a function $s\in\m-\{0\}$ such that $I(s)=es(\text{unit})$.
After multiplying $e$ by a unit, we can assume that $I(s)=es$; this does
not change the other assumption that $I(x_i)\in (e)$ for $i=1,\ldots,n$.
{\it Thus, from now on, we have $I(s)=es$. }This changes
the endomorphism
$\varphi$ of $\m/\m^2$ (which is defined in terms of $e$)
by a nonzero scalar.
Having made this change,
we will show that $\varphi$ is diagonalizable, with eigenvalues
$b_1,\ldots,b_n$ in $\F_p\subset k_U$, and that $U/G$
has a $\mu_p$-quotient singularity of the form
$\frac{1}{p}(b_1,\ldots,b_n)$.

Let $v=\sigma(s)/s$. By our assumptions, $v$ is a unit
on $U$, and $v=1+e$. Since $v=\sigma(s)/s$, $v$ has norm 1
for the action of $G$. Write
$(\sigma/\id)(x)$ for $\sigma(x)/x$; this is the multiplicative action of
$\sigma-1\in \Z G$ on a commutative ring. In these terms,
define $f=(\sigma/\id)^{p-2}(v)$. In the group ring $\Z G$, we have
$(\sigma-1)^{p-1}\equiv \sigma^{p-1}+\cdots+\sigma+1\pmod{p}$,
and so we can define an element $\alpha\in \Z G$ by
\begin{equation}
(\sigma-1)^{p-1}=\sigma^{p-1}+\cdots+\sigma+1-p\alpha. \tag{$\ast$}
\end{equation}
It follows that $\sigma(f)/f=N(v)/g^p$, with $g:=\alpha(v)$
(where $\alpha$ acts multiplicatively). Since $v$ has norm 1,
we have $\sigma(f)/f=1/g^p$. This formula will be exactly what we need
to construct a $\mu_p$-torsor $W\to U$ with a commuting action
of $G$.

We first analyze the function $g$ in more detail. By equation (*),
the sum of the coefficients of $\alpha$ in $\Z G$ is 1.
Therefore, $g=\alpha(v)$ is a product of integer powers
of the functions $\sigma^j(v)=1+e(1+\m)$ with total exponent 1,
and so $g=1+e(1+\m)$.

\begin{lemma}
\label{root}
For any $p$th root of unity $\zeta$ in $O(U)$,
$1-\zeta$ is in $e\m$.
\end{lemma}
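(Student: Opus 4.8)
The plan is to work entirely in the regular local ring $A:=O_{U,P}$, to set $u:=1-\zeta$, and to prove the claim $u\in e\m$ directly. If $\zeta=1$ then $u=0$ and there is nothing to prove, so I would assume $\zeta\neq 1$; since $A$ is a domain, this forces $u\neq 0$.

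The heart of the argument is an elementary identity coming from $\zeta^p=1$. Expanding
$$0=\zeta^p-1=(1-u)^p-1=\sum_{k=1}^p\binom{p}{k}(-1)^k u^k,$$
I would isolate the top term $(-1)^pu^p$ and use that $p\mid\binom{p}{k}$ for $1\le k\le p-1$. Solving for $u^p$ and cancelling one factor of $u$ (legitimate because $A$ is a domain and $u\neq 0$) yields
$$u^{p-1}=p\,h\qquad\text{for some }h\in A.$$
(For $p=3$ this is just $u^2=3(u-1)$.) Now I would feed in the hypothesis $p\in e^{p-1}\m$ of Theorem \ref{mup}, which gives $u^{p-1}=ph\in e^{p-1}\m$.

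From here I would extract divisibility in two stages. First, $u^{p-1}\in e^{p-1}\m\subseteq(e^{p-1})$ shows $e^{p-1}\mid u^{p-1}$; since a regular local ring is a unique factorization domain (Auslander--Buchsbaum), comparing orders of vanishing $\ord_\pi$ along each prime $\pi$ gives $(p-1)\ord_\pi(e)\le(p-1)\ord_\pi(u)$, hence $e\mid u$. Write $u=eq$ with $q\in A$. Second, writing $u^{p-1}=e^{p-1}m$ with $m\in\m$ and substituting $u=eq$, cancellation of $e^{p-1}$ in the domain $A$ gives $q^{p-1}=m\in\m$; since $\m$ is prime, $q\in\m$, so $u=eq\in e\m$.

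I expect the main obstacle to be this final upgrade. A plain valuation count only delivers $u\in(e)$, and obtaining the extra factor in $\m$ — that is, $u\in e\m$ rather than merely $u\in(e)$ — is exactly where one must use the full strength of the hypothesis $p\in e^{p-1}\m$ (the factor lying in $\m$, not just in $(e^{p-1})$) together with the primality of $\m$.
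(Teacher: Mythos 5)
Your proof is correct. The overall skeleton matches the paper's: both arguments reduce the claim to the relation between $(1-\zeta)^{p-1}$ and $p$, then use the hypothesis $p\in e^{p-1}\m$ together with unique factorization in the regular local ring to get $e\mid(1-\zeta)$, and finally upgrade from $(e)$ to $e\m$ by observing that the cofactor cannot be a unit. The genuine difference is in how you produce the key divisibility. The paper notes that $1+\zeta+\cdots+\zeta^{p-1}=0$ in the domain $O(U)$, so there is a ring homomorphism $\Z[\mu_p]\to O(U)$, and then imports the classical fact that $p=(1-\zeta_p)^{p-1}\cdot(\text{unit})$ in $\Z[\mu_p]$; this yields the exact relation $u^{p-1}=p\cdot(\text{unit})$. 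You instead expand $(1-u)^p=1$ and cancel one factor of $u$, obtaining only the one-sided divisibility $u^{p-1}=ph$ with $h\in A$ not necessarily a unit — which, as you correctly observe, is all the argument needs, since $ph\in e^{p-1}\m$ already. Your route is self-contained (no appeal to ramification in cyclotomic rings), at the cost of losing the sharper unit relation; the paper's route gives that extra precision for free but requires the small detour through $\Z[\mu_p]$. Your final step ($q^{p-1}=m\in\m$, hence $q\in\m$) is a slightly more explicit version of the paper's "if $a$ were a unit, $p$ would be $e^{p-1}$ times a unit" contradiction, and your closing remark correctly identifies that the factor of $\m$ in the hypothesis $p\in e^{p-1}\m$ is exactly what rules out $u$ being a unit multiple of $e$. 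The only cosmetic omission is that, like the paper, one should shrink $U$ around $P$ at the end to pass from the conclusion in $O_{U,P}$ to the same conclusion in $O(U)$.
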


\begin{proof}
The statement is clear if $\zeta=1$. So assume that $\zeta\neq 1$.
Since $O(U)$ is a domain and $(1-\zeta)(1+\zeta+\cdots+\zeta^{p-1})=
1-\zeta^p=0$, we must have $1+\zeta+\cdots+\zeta^{p-1}=0$.
That is, we have a homomorphism from the ring of integers
$\Z[\mu_p]$ in $\Q(\mu_p)$ to $O(U)$, taking a primitive $p$th root
of unity to $\zeta$. In $\Z[\mu_p]$, $p$ is $(1-\zeta)^{p-1}$ times a unit
\cite[section IV.1]{Lang},
and so the same is true in $O(U)$. We are given that
$p$ is in $e^{p-1}\m$. Since $U$ is regular, $O_{U,P}$ is a unique
factorization domain, and so
$1-\zeta$ must be a multiple of $e$; write $1-\zeta =ea$
for some $a\in O_{U,P}$.
If $a$ is a unit, then $p$ would be $e^{p-1}$ times a unit, a contradiction.
So, as an element of $O_{U,P}$,
$1-\zeta$ is in $e\m$. After shrinking $U$ around $P$ if necessary,
this gives the same conclusion in $O(U)$.
\end{proof}

The reason for constructing units $f$ and $g$ with $\sigma(f)/f=1/g^p$
is to define a $\mu_p$-torsor over $U$ with a commuting
action of $G$.
Namely, define a $\mu_p$-torsor $W\to U$ by $w^p=f$.
Here $\mu_p$
acts on $W\subset A^1\times U$ by $\zeta(w,x)=(\zeta w,x)$,
for $\zeta\in\mu_p$. Write $\tau=\sigma^{-1}$ in $G$.
Since $\sigma(f)/f=1/g^p$, $W$ has an action of $G$ that commutes
with the action of $\mu_p$, by $\tau(w,x)=(w/g(x),
\tau(x))$. (We check these properties in the next paragraph.)
In particular, $\sigma(w)=\tau^*(w)=w/g$,
by definition of the $G$-action on functions.
We will show that the scheme $Q:=W/G$ is regular. Then
$U/G=Q/\mu_p$ will be a quotient of a regular scheme by $\mu_p$,
as we want.
$$\xymatrix@R-10pt{
W \ar[r]^{G} \ar[d]^{\mu_p} & Q\ar[d]^{\mu_p} \\
U \ar[r]^{G} & U/G.
}$$
For convenience, we write $P$ for the closed point of interest
in each of these schemes. (There is a unique closed point in $W$ over
$P$ in $U$,
and it maps to a closed point in $Q$ and in $U/G=Q/\mu_p$.) We have
seen that the points $P\in U$ and $P\in U/G$ have the same residue
field $k_U$. Since $P\in W$ is given by the equations $w=1$
and $x_1=\cdots=x_n=0$, we see that $P\in W$ has the same residue field
$k_U$. Since $k_{U/G}\subset k_Q\subset k_W$, $P\in Q$ also has the same
residue field.

For clarity, let us first check that the formulas above give
an action of $G$ on $W$ that commutes with the action of $\mu_p$.
First, to show that $\sigma$ as above maps $W=\{w^p=f\}$ into itself,
we have to show that if $w^p=f(x)$, then $(w/g(x))^p=f(\sigma^{-1}(x))$,
or equivalently that $w^p/g^p=\sigma(f)$;
this follows from the fact that $\sigma(f)/f=1/g^p$. Next, let us show
that $\sigma^p=1$ on $W$.
By induction, we have $\sigma^{-i}(w,x)
=(w/(g(x)g(\sigma^{-1} x)\cdots g(\sigma^{1-i}x)),\sigma^{-i}x)$
for each natural number $i$. Therefore, to show that $\sigma^p$ is the identity
on $W$ (hence that we have an action of $G$), it suffices to show
that $g$ has norm 1. But that is true, because $v$ has norm 1 and $g$
is a product of integer powers of $\sigma^j(v)$ for integers $j$. So we have
an action of $G$ on $W$. Finally,
to show that $G$ and $\mu_p$ commute on $W$:
for $\zeta\in\mu_p$, we have 
$\zeta \sigma^{-1}(w,x)=\zeta(w/g(x),\sigma^{-1}(x))
=(\zeta w/g(x),\sigma^{-1}(x))$
while $\sigma^{-1}\zeta(w,x)=\sigma^{-1}(\zeta w,x)
=(\zeta w/g(x),\sigma^{-1}(x))$.
These are equal as regular functions on the scheme $\mu_p\times W$.
So we have shown that $G$ and $\mu_p$ commute on $W$.

We need to show that $f$ is not a $p$th power in $O(U)$.
Suppose it is, say $f=u^p$ for a regular function $u$ on $U$.
Since $f$ is a unit, so is $u$. Then $1/g^p=\sigma(f)/f
=(\sigma(u)/u)^p$, and so $\zeta/g=\sigma(u)/u$ for some $p$th
root of unity $\zeta$ in $O(U)$. Here $\zeta=1+(\zeta-1)
=1+e\m$ by Lemma \ref{root}, and $1/g=1-e(1+\m)$,
so $\sigma(u)/u=1-e(1+\m)$. Here $\sigma(u)/u=1+I(u)/u$,
so $I(u)=-ue(1+\m)=e(\text{unit})$. This contradicts
our assumption that $I(O(U))$ is contained in $e\m$. So in fact
$f$ is not a $p$th power.

From there, we can show that the scheme $W$ is integral
(after shrinking $U$ around $P$,
if necessary). Namely, since $f$ is not a $p$th power
in $O(U)$, $f$ is also not a $p$th power in the function
field $k(U)$, and so $k(U)[f^{1/p}]$ is a degree-$p$ field extension
of $k(U)$. Write $\alpha$ for the $\mu_p$-torsor $W\to U$.
Since $W=\{w^p=f\}$,
there is a nonempty open subset $V\subset U$ with $\alpha^{-1}(V)$
integral. Since $W\to U$ is finite and flat, $W$ is
Cohen-Macaulay and equidimensional.
By equidimensionality,
every irreducible component of $W$ must dominate $U$.
Since $\alpha^{-1}(V)$ is irreducible, it follows
that $W$ is irreducible. Since $\alpha^{-1}(V)$ is reduced,
$W$ is reduced in codimension 0; since $W$ is Cohen-Macaulay,
it follows that $W$ is reduced \cite[Tag 031R]{Stacks}.
Since $W$ is reduced and irreducible,
it is integral.

It is not needed for what follows, but for clarity,
let us analyze the singularities of $W$ in the special
case where $s=e(1+\m)$ (for example when $s=e$),
so that $I(e)=e^2(1+\m)$. The equation of $W$
is $w^p=f$. Since $v=1+e$, one can show by induction
from the formula for $I(e)$
that $f=(\sigma/\id)^{p-2}(v)=1+e^{p-1}q$ for some unit $q$ on $U$,
and so we can rewrite the equation of $W$ as $w^p=1+e^{p-1}q$.
Let $w_0=w-1$, so that $w_0$ vanishes at the point $P$ of interest
in $W$. In terms of $w_0$, the equation of $W$ becomes
$(1+w_0)^p=1+e^{p-1}q$, that is,
$$w_0^p=e^{p-1}q-\binom{p}{1}w_0-\cdots-\binom{p}{p-1}w_0^{p-1}.$$
We are given that $p$ is in $e^{p-1}\m$, and so this equation
has the form
$w_0^p=e^{p-1}s$ for some unit $s$ on $W$.
If $p=2$ and $e\not\in\m^2$, it follows that $W$ is regular.
However, if $p>2$, then $W$ is not normal. For example,
if $e\not\in\m^2$, then the singularity of $W$ looks roughly
like the cuspidal curve $\{ w_0^p=x_1^{p-1}\}$ times a smooth variety.

We will need the following version of Kir\'aly and L\"utkebohmert's
results.

\begin{theorem}
\label{klnormal}
\hspace{2em}
\begin{enumerate}
\item Let $B$ be a local domain with residue field $k_B$.
Let $p$ be a prime number, and let $G=\Z/p$ act nontrivially
on $B$. Suppose that the ideal $B\cdot I(B)$ that defines
the fixed point scheme in $\Spec B$ is generated by one element.
Then $B$ is free of rank $p$ over the ring of invariants $B^G$.
More precisely,
for any element $t$ such that $I(t)$ generates the ideal $B\cdot I(B)$,
we have $B=A\{ 1,t,\ldots,t^{p-1}\}$.
\item In addition to the assumptions of (1), assume that
$B$ is regular. Then $B^G$ is regular.
\item In addition to the assumptions of (1), assume that $B$ is noetherian
and the inclusion $k_{B^G}\subset k_B$
is an equality. Then there is a minimal set of generators
$y_1,\ldots,y_r$ for $\m_B$ such that $I(y_1)$ generates
the ideal $B\cdot I(B)$ and $y_2,\ldots,y_r$
are $G$-invariants.
\end{enumerate}
\end{theorem}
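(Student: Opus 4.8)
The plan is to set $A:=B^G$ and build everything from the single element $t$, with $e:=I(t)$ generating the ideal $B\cdot I(B)$. This ideal is nonzero (the action is nontrivial) and proper (otherwise all of $\Spec B$ would be fixed), so $e\in\m_B-\{0\}$; crucially $I(B)\subseteq (e)$ and $\sigma(t)=t+e$. Since $p$ is prime and $G$ acts nontrivially, the action is faithful, so on the fraction field $K=\operatorname{Frac}(B)$ the extension $K/K^G$ is Galois with group $G$ of degree $p$, and $K^G=\operatorname{Frac}(A)$ (any $G$-invariant fraction acquires a $G$-invariant denominator after multiplying by the remaining conjugates). Because $\sigma(t)=t+e\neq t$, $t\notin K^G$, so $1,t,\dots,t^{p-1}$ is a $K^G$-basis of $K$; in particular these elements are $A$-linearly independent, the elementary symmetric functions of the conjugates $\sigma^i(t)$ lie in $A$, so $t^p\in A\{1,\dots,t^{p-1}\}$ and $A[t]=A\{1,\dots,t^{p-1}\}$ is $A$-free of rank $p$. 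It remains to prove the spanning statement $B=A[t]$.

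Spanning is the main obstacle. Introduce the twisted derivation $\partial:=I/e\colon B\to B$, well defined since $I(B)\subseteq(e)$ and $B$ is a domain; it satisfies $\partial(t)=1$, $\partial|_A=0$, and the Leibniz rule $\partial(xy)=\partial(x)\sigma(y)+x\partial(y)$ from Lemma \ref{product}. The idea is to peel off the coefficients of $b\in B$ from the top: modulo $(e)$ one has $\partial(t^j)\equiv jt^{j-1}$, so $\partial$ behaves like $d/dt$, and I expect the leading coefficient of $b$ to be recovered (up to the unit $(p-1)!$) as $\partial^{p-1}(b)$. The key point to verify is that this leading coefficient is $G$-invariant, i.e.\ lies in $A$: once that is known one subtracts the corresponding multiple of $t^{p-1}$ and induces on degree, eventually writing $b$ as an $A$-polynomial in $t$ of degree $<p$. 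Invariance is forced by the group-ring identity $(1+I)^p=1$, which expresses $I^{p}$ through lower powers of $I$; for $p=2$ this is simply $I^2=-2I$ together with $I(e)=-2e$, whence writing $c=\partial(b)$ and applying $I$ to $I(b)=ec$ gives $I(c)=0$, so $c\in A$ and $b=\partial(b)\,t+(b-\partial(b)t)\in A+At$. Carrying out the analogous but longer bookkeeping for general $p$ is the technical heart of the statement, and this is exactly the computation of Kir\'aly and L\"utkebohmert \cite{KL}; granting it, part (1) follows.

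For part (2), by part (1) the map $A\to B$ is finite and free of rank $p$, hence faithfully flat, and $B$ is noetherian (being regular). By the Eakin--Nagata theorem, $A$ is then noetherian. Now $A\to B$ is a faithfully flat local homomorphism of noetherian local rings with $B$ regular, and regularity of the total ring descends to the base along such a map (if $A\to B$ is flat local and $B$ is regular, then $A$ is regular, via the behaviour of global dimension under flat extension); therefore $A=B^G$ is regular.

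For part (3), the extra hypothesis $k_A=k_B$ pins down the geometry of $t$ over $A$. Reducing modulo $\m_A$, the algebra $B/\m_A B=B\otimes_A k_A$ is free of rank $p$ over $k_A=k_B$ and local (its maximal ideal is $\m_B/\m_A B$), so it equals $k_B[t]/(\overline\chi(t))$ for the characteristic polynomial $\chi$ of $t$; a degree-$p$ local quotient $k_B[T]/(\overline\chi)$ with residue field $k_B$ forces $\overline\chi(T)=(T-\lambda)^p$ for some $\lambda\in k_A$. Lifting $\lambda$ to $A$ and replacing $t$ by $t-\lambda$ (which leaves $I(t)=e$ unchanged), I may assume $t\in\m_B$ and $\m_B=\m_A B+tB$. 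Next consider the $k_B$-linear functional $\overline\partial\colon\m_B/\m_B^2\to k_B$ induced by $\partial=I/e$ (well defined since $I(\m_B^2)\subseteq e\m_B$ by Lemma \ref{product}); as $\partial(t)=1$ we get $\overline\partial(\bar t)\neq 0$, so $\overline\partial$ is surjective with kernel a hyperplane, while every invariant in $\m_A$ is killed by $\partial$ and hence maps into this kernel. Since the classes of $t$ and of $\m_A$ together span $\m_B/\m_B^2$, the images of $\m_A$ must fill the whole hyperplane $\ker\overline\partial$. Choosing $y_1=t$ together with $y_2,\dots,y_r\in\m_A$ whose residues form a basis of $\ker\overline\partial$ then yields, by Nakayama, a minimal generating set of $\m_B$ in which $I(y_1)=e$ generates $B\cdot I(B)$ and $y_2,\dots,y_r$ are $G$-invariant, as required.
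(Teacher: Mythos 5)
Your proposal is correct, and for the one part the paper actually proves in detail --- part (3) --- it takes a genuinely different route. For parts (1) and (2) you and the paper are in essentially the same position: the spanning statement $B=A\{1,t,\ldots,t^{p-1}\}$ is the computation of Kir\'aly--L\"utkebohmert, which both of you cite (your linear-independence argument via $[K:K^G]=p$ and your worked $p=2$ case are fine); for (2) you supply your own standard argument (Eakin--Nagata to get $A$ noetherian, then descent of regularity along the finite free, hence faithfully flat, local map $A\to B$), where the paper simply cites \cite{KL}. For (3), the paper picks $y_1\in\m_B$ with $I(y_1)$ generating $B\cdot I(B)$ (possible because $k_{B^G}=k_B$ gives $B=B^G+\m_B$, so $I(B)=I(\m_B)$), extends $y_1$ to a basis of $\m_B/\m_B^2$ by arbitrary $z_2,\ldots,z_r$, and then replaces each $z_i$ by its projection onto the $B^G$-summand of $B=B^G\{1,y_1,\ldots,y_1^{p-1}\}$, observing that this changes $z_i$ only by an element of $(y_1)$. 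You instead normalize $t$ into $\m_B$ by identifying $B\otimes_A k_A$ with $k_B[X]/((X-\lambda)^p)$, and then use the induced $k_B$-linear functional $\overline{I/e}$ on $\m_B/\m_B^2$ together with a codimension-one count to produce invariant $y_2,\ldots,y_r$ spanning its kernel. Both arguments are valid; the paper's projection trick is shorter and avoids the characteristic-polynomial analysis, while your functional $\overline\partial$ makes explicit that the invariants fill out exactly the hyperplane transverse to $t$.

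One small slip worth fixing: your parenthetical justification that $B\cdot I(B)$ is proper (``otherwise all of $\Spec B$ would be fixed'') is inverted --- an improper ideal would make the fixed scheme empty, not total --- and the claim itself can fail in the generality of part (1) (take $B$ a field with a nontrivial $G$-action). It is harmless here: nothing in your arguments for (1) or (2) uses $e\in\m_B$, and under the hypotheses of (3) the equality $k_{B^G}=k_B$ forces $G$ to act trivially on $k_B$, whence $I(B)\subseteq\m_B$ and $e\in\m_B$ really does hold.
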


\begin{proof}
Statement (1) is due to Kir\'aly and L\"utkebohmert for $B$ normal,
but their proof works without change for $B$ a domain
\cite[Theorem 2 and Proposition 5]{KL}. They also prove
statement (2). They prove statement (3)
when $B$ is regular. We now extend the proof of (3)
for $B$ only a domain.

Since the inclusion $k_{B^G}\subset k_B$ is an equality,
we have $B=B^G+\m_B$, and so $I(B)=I(\m_B)$. Since the ideal
$B\cdot I(B)$ is generated by one element, there is an element
$y_1\in\m_B$ such that $I(y_1)$ generates this ideal.
By Lemma \ref{product},
we have $I(\m_B^2)\subset \m_B I(\m_B)$. Here $I(\m_B)$ is not zero
(because the $G$-action is nontrivial), and so $y_1$ is not in $\m_B^2$.

Since $B$ is noetherian, the ideal $\m_B$ is finitely generated.
Choose elements $z_2,\ldots,z_r$ in $\m_B$ such
that $y_1,z_2,\ldots,z_r$ form a basis for $\m_B/\m_B^2$.
By part (1), we know that $B=B^G\{1,y_1,\ldots,y_1^{p-1}\}$.
For $2\leq i\leq r$, let $y_i$ be the projection of $z_i$
to $B^G$ with respect to this decomposition.
Then $y_i\equiv z_i\pmod{(y_1)+\m_B^2}$,
and so $y_1,\ldots y_r$ map to a basis for $\m_B/\m_B^2$.
Thus $y_1,\ldots,y_r$ are a minimal set of generators
for $\m_B$ (by Nakayama's lemma), and $y_2,\ldots,y_r$
are $G$-invariant.
\end{proof}

Let us write out the action of $G$ on $W$. The maximal ideal
of $P$ in $W$ is generated by $w_0,x_1,\ldots,x_n$.
We have $I(w_0)=I(w)=w(\frac{1}{g}-1)=(1+w_0)(\frac{1}{g}-1)$.
Since $1/g=1+eu$ for some unit $u$ on $U$,
we have $I(w_0)=eu(1+w_0)=e(\text{unit})$. We also have
$I(x_i)$ in the ideal $(e)$ for $i=1,\ldots,n$; so the fixed point
scheme $W^G$ is defined by the single equation $e=0$ in $W$.
As a result, even though $W$ is typically not normal,
Theorem \ref{klnormal} gives that
the morphism $W\to W/G=Q$ is finite and faithfully flat of degree $p$.
It follows that $Q$
is noetherian \cite[Tag 033E]{Stacks}.
(Beware that for a general noetherian scheme $X$
with an action of a finite group $G$, $X/G$ need not be noetherian,
and the morphism $X\to X/G$ need not be finite
\cite[Proposition 0.10]{Nagata}. These properties
do hold if $X$ is of finite type over a noetherian ring $A$ and $G$
acts $A$-linearly \cite[Theorem and Corollary 4]{Fogartyfinite}.)

The action of $\mu_p$ on the affine scheme $Q$
gives a grading of $O(Q)$ by $\Z/p$.
For each $j\in \Z/p$, since $O(Q)$ is noetherian, the ideal in $O(Q)$
generated
by the $j$th graded piece $O(Q)_j$ is finitely generated,
and so $O(Q)_j$ is a finitely generated module over $O(Q)_0=O(Q/\mu_p)$.
So the whole ring $O(Q)$ is finite over $O(Q/\mu_p)$; that is,
$Q\to Q/\mu_p$ is finite. Also, $O(Q/\mu_p)$ is a pure subring
(because it is a summand) of the noetherian ring $O(Q)$, so it is
noetherian \cite[Proposition 6.15]{HR};
that is, $Q/\mu_p=U/G$ is noetherian. Finally, the composition
$W\to Q\to Q/\mu_p=U/G$ is finite, and $O(U)$ is a sub-$O(U/G)$-module
of $O(W)$, so $O(U)$ is a finitely generated $O(U/G)$-module;
that is, $U\to U/G$ is finite.

Let $h_1,\ldots,h_r$ be a minimal set of generators
for the maximal ideal at $P$ of $O(Q)$. (So $r$ is at least the dimension
$n$ of $Q$.)

\begin{lemma}
\label{sameideal}
The ideals $(h_1,\ldots,h_r)$ and $(x_1,\ldots,x_n)$ in $O(W)$ are equal.
(That is: the fiber in $W$ over the closed point $P\in Q$
is equal to the fiber in $W$ over the closed point $P\in U$, as a closed
subscheme.)
\end{lemma}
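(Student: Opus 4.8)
The plan is to prove the equivalent statement $\m_Q\, O(W)=\m_U\, O(W)$, where $\m_U=(x_1,\ldots,x_n)$, since $h_1,\ldots,h_r$ generate $\m_Q$. Both ideals cut out a length-$p$ subscheme of $W$ supported at $P$: the quotient $O(W)/\m_U O(W)$ is the fiber of the $\mu_p$-torsor $W\to U$ over $P$, while $O(W)/\m_Q O(W)$ is the fiber of the finite faithfully flat degree-$p$ map $W\to Q$, and each is a $k_U$-vector space of dimension $p$ (using $k_Q=k_U$). Because the lengths agree, it suffices to prove the single containment $\m_Q O(W)\subseteq \m_U O(W)$: then $O(W)/\m_U O(W)$ is a quotient of $O(W)/\m_Q O(W)$ between $k_U$-vector spaces of the same finite dimension, hence an isomorphism, forcing the two ideals to coincide. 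Writing $F=\Spec\big(O(W)/\m_U O(W)\big)\cong k_U[w_0]/(w_0^p)$, the containment is the assertion that every $G$-invariant function on $W$ vanishing at $P$ restricts to $0$ on $F$.

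To control such functions I would exploit the $\mu_p$-grading from the torsor, i.e.\ the free decomposition $O(W)=\bigoplus_{k=0}^{p-1}O(U)\,w^k$ with $O(U)=O(W)^{\mu_p}$. Take $h\in\m_Q$, a $G$-invariant function vanishing at $P$, and write $h=\sum_{k=0}^{p-1}u_kw^k$ with $u_k\in O(U)$. Since the actions of $G$ and $\mu_p$ commute, $\sigma$ preserves the $\mu_p$-eigenspaces; using $\sigma(w)=w/g$ with $g\in O(U)$ a unit, the degree-$k$ component of $\sigma(h)$ is $\sigma(u_k)g^{-k}w^k$. Comparing $h=\sigma(h)$ degree by degree yields
$$\sigma(u_k)=g^{k}u_k,\qquad\text{equivalently}\qquad I(u_k)=(g^{k}-1)\,u_k$$
for every $k$.

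The crux is then to show $u_k\in\m_U$ for $1\le k\le p-1$. Recall $g=1+e(1+\m)$, so for $1\le k\le p-1$ the binomial expansion gives $g^{k}-1=e\cdot(\text{unit})$, the unit being $\equiv k\neq 0$ in $k_U$ because $\mathrm{char}\,k_U=p$. Hence $I(u_k)=e\cdot(\text{unit})\cdot u_k$. On the other hand, we are in the case where $I(O(U))\subseteq e\m$, so $I(u_k)\in e\m$; since $O(U)$ is a domain and $e\neq 0$, I can cancel $e$ to conclude $u_k\in\m_U$. Thus the only eigencomponent of $h$ that survives at $P$ is the degree-$0$ one, and restricting to $F$ (where $x_1=\cdots=x_n=0$ and $w^p=f(P)=1$, so $w=1+w_0$) collapses $h$ to the constant $u_0(P)\in k_U$; as $h(P)=0$, this constant vanishes, giving $h\in\m_U O(W)$. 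I expect this eigencomponent estimate to be the main obstacle: everything rests on the identity $\sigma(u_k)=g^{k}u_k$ imposed by the commuting actions, combined with $g^{k}-1=e\cdot(\text{unit})$, which turns the global $G$-invariance of $h$ into the divisibility $u_k\in\m_U$. Once this is established, the length count finishes the proof.
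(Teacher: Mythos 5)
Your proof is correct, and it reaches the key containment by a genuinely different route than the paper. Both arguments begin identically: the two ideals cut out artinian local subschemes of $W$ of the same length $p$ (using that $W\to U$ is a $\mu_p$-torsor and $W\to Q$ is finite flat of degree $p$ with $k_Q=k_U$), so a single containment suffices, and that containment amounts to showing every $G$-invariant function vanishing at $P$ lies in $\m_U O(W)$. The paper proves this by working in the fiber $O(W)/(x_1,\ldots,x_n)=k_U\{1,w_0,\ldots,w_0^{p-1}\}$: it checks that $\varphi=I(\cdot)/e$ descends to this quotient, computes $\varphi(w_0^m)=-mw_0^m(1+w_0)$, and verifies $w_0^p=0$ there (which is where the hypothesis $p\in e^{p-1}\m$ enters) to conclude $\varphi$ is injective on the span of $w_0,\ldots,w_0^{p-1}$, so no invariant function can survive modulo $(x_1,\ldots,x_n)$. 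You instead decompose $h=\sum u_kw^k$ along the $\mu_p$-grading, use $\sigma(w)=w/g$ to extract the componentwise identity $I(u_k)=(g^k-1)u_k$ with $g^k-1=e(k+\m)=e(\text{unit})$ for $1\le k\le p-1$, and play this off against the standing assumption $I(O(U))\subseteq e\m_U$ to cancel $e$ and force $u_k\in\m_U$; the weight-zero coefficient then dies by evaluation at $P$. Your version is arguably cleaner: it works entirely upstairs in $O(U)$, avoids the explicit computation of $\varphi$ on powers of $w_0$ and the verification that $w_0^p$ vanishes in the fiber, and isolates exactly where the dichotomy $I(O(U))\subseteq e\m$ versus $I(u)=e(\text{unit})$ does the work. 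The paper's computation of $\varphi$ on the fiber is not wasted, though: the same formulas are reused later in the proof of Theorem \ref{mup} to identify the $\mu_p$-weights $b_i$ with the eigenvalues of $\varphi$, so the two approaches end up covering much of the same ground across the full argument.
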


\begin{proof}
We have seen that the degree-$p$ morphism $W\to W/G=Q$
is finite and flat.
So the fiber in $W$ over the point $P$ in $Q$ has degree $p$
over the residue field of $P\in Q$, which we have seen is $k_U$.
As a set, this fiber is one point $P\in W$, with the same residue
field $k_U$. So the quotient ring $O(W)/(h_1,\ldots,h_r)$ is an artinian
local ring of length $p$.

The ideal $(x_1,\ldots,x_n)$ in $O(W)$ defines the fiber in $W$
over the point $P$ in $U$. Since $W\to U$ is a $\mu_p$-torsor, this fiber
has degree $p$ over the residue field of $P\in U$,
which is the same field $k_U$. Again, this fiber is one point
$P\in W$ as a set, with the same residue field; so
$O(W)/(x_1,\ldots,x_n)$ is an artinian local ring of length $p$.
So if we can show that the ideal $(h_1,\ldots,h_r)$ in $O(W)$
is contained in $(x_1,\ldots,x_n)$ in $O(W)$, then they are equal. 

It suffices to show (*) that every function $y$ on $W$ that vanishes
at the point $P$ in $W$ but has nonzero image in $O(W)/(x_1,\ldots,x_n)$
has $I(y)\neq 0$ (that is, it is not $G$-invariant). (Namely, this would
imply that the $G$-invariant functions $h_1,\ldots,h_n$ on $W$
lie in the ideal $(x_1,\ldots,x_n)$, as we want.) By the formula
for the $G$-action on $W$, in particular that $\sigma(w)=w/g$ where
$g=1+e(1+\m)$, we see that $G$ fixes the closed subscheme $\{ e=0
\}$ in $W$. That is, $I$ maps $O(W)$ into the ideal $(e)$ in $O(W)$.
Also, we know that $I(x_i)\in e\m_U=e(x_1,\ldots,x_n)\subset O(U)$
for $i=1,\ldots,n$.
So $\varphi(y):=I(y)/e$ is a well-defined linear map from
$O(W)/(x_1,\ldots,x_n)$ to itself. Explicitly, by the equation
of $W$, $O(W)/(x_1,\ldots,x_n)$ is a $k_U$-vector space with basis
$1,w,\ldots,w^{p-1}$. Equivalently, in terms of $w_0=w-1$
(which vanishes at $P$ in $W$), a basis for $O(W)/(x_1,\ldots,x_n)$
is given by $1,w_0,\ldots, w_0^{p-1}$.

The claim (*) will follow if the map $\varphi$ restricted
to $k\{ w_0,w_0^2,\ldots,w_0^{p-1}\}$ is injective. Since
$g=1+e(1+\m_U)$, we have $1/g=1-e(1+\m_U)$, and hence
$I(w_0)=I(w)=(w/g)-w=-ew(1+\m_U)=-e(1+w_0)(1+\m_U)$.
So $\varphi(w_0)=-(1+w_0)$. By Lemma \ref{product}, for $m\geq 0$,
\begin{align*}
I(w_0^m)&=I(w_0)\sum_{j=1}^m \sigma(w_0)^{j-1}w_0^{m-j}\\
&=-e(1+w_0)(1+\m_U)\sum_{j=1}^m(w_0-(1+w_0)e(1+\m_U))^{j-1}w_0^{m-j}.
\end{align*}
Since $\varphi$ takes values in $O(W)/(x_1,\ldots,x_n)$ (where
$e$ is zero), it follows that
$\varphi(w_0^m)=-mw_0^m(1+w_0)$. It is clear that these elements are
linearly independent over $k_U$
for $m=1,\ldots,p-2$; to show that they are linearly
independent for $m=1,\ldots,p-1$,
it will suffice to show that $w_0^p$ is zero in $O(W)/(x_1,\ldots,x_n)
=k_U\{1,w_0,\ldots,w_0^{p-1}\}$.
(This comes up because $w_0^p$ appears in our formula for $I(w_0^{p-1})$.)

Namely, we have $w_0^p=(w-1)^p=w^p-1$ plus a multiple of $p$ in $O(W)$.
Here $w^p-1=f-1=e^{p-1}(1+\m_U)$, and $p$ is in $e^{p-1}\m_U$,
as we assumed; so $w_0^p=e^{p-1}(1+\m_U)$, which is zero
in $O(W)/(x_1,\ldots,x_n)$, as we want. Lemma \ref{sameideal} is proved.
\end{proof}

The number $r$ of generators $h_1,\ldots,h_r$ for the maximal ideal
$\m_Q$ in the local ring $O_{Q,P}$ is at least $n=\dim(Q)$.
On the other hand, Lemma \ref{sameideal} implies that the extended
ideal $(h_1,\ldots,h_r)$ in $O_{W,P}$ can be generated by only $n$ elements,
so the vector space $(h_1,\ldots,h_r)\otimes_{O_{W,P}} k_W$
has dimension at most $n$. So this vector space is spanned
by $n$ of the $h_i$'s, which we can assume are $h_1,\ldots,h_n$.
By Nakayama's lemma, it follows that the extended ideal $(h_1,\ldots,h_r)$
is equal to the extended ideal
$(h_1,\ldots,h_n)$ in $O_{W,P}$. Since $W\to Q$ is faithfully
flat, extending and contracting an ideal in $O_{Q,P}$ gives the same ideal
\cite[Tag 05CK]{Stacks}.
As a result, we have $(h_1,\ldots,h_n)=(h_1,\ldots,h_r)$ in $O_{Q,P}$.
That is, the maximal ideal $\m_Q$ can be generated by $n=\dim(Q)$
elements, which means that $Q$ is regular. (This is somewhat surprising,
since $W$ is typically not regular or even normal.)

It remains to show that the point $P$ in $Q$ is a
fixed point for $\mu_p$, with weights given by the eigenvalues
of $\varphi$.
First, let us show that $\mu_p$ fixes the point $P$ in $Q$. (This does not
seem obvious, since $\mu_p$ does not fix the point $P$ in $W$;
in fact, $\mu_p$ acts freely on $W$.)
The functions $x_1,\ldots,x_n$ on $W$ are pulled back from $U$,
hence fixed by $\mu_p$. As a result, the ideal $(x_1,\ldots,x_n)$
in $O(W)$ is preserved by $\mu_p$. Equivalently, by Lemma 
\ref{sameideal}, the ideal $(h_1,\ldots,h_n)$ in $O(W)$
is preserved by $\mu_p$. We have seen that
the morphism $W\to W/G=Q$ is faithfully flat.
As a result,
the ideal $(h_1,\ldots,h_n)$ in $O(Q)$ is equal to the intersection
of the extended ideal $(h_1,\ldots,h_n)$ in $O(W)$ with $O(Q)$.
Since $W\to Q$ is $\mu_p$-equivariant,
it follows that the ideal $(h_1,\ldots,h_n)=\m_Q$ in $O(Q)$
is preserved by $\mu_p$. Also, the residue field of $Q/\mu_p$ at $P$
maps isomorphically to the residue field of $Q$ at $P$, and so $\mu_p$
acts trivially on the latter field.
That is, $\mu_p$ fixes the point $P$ in $Q$,
as we want.

We now change our choice of the functions $h_1,\ldots,h_n$.
Since $\mu_p$ is linearly reductive,
we can choose coordinates
$h_1,\ldots,h_n$ for $Q$ near $P$ that are $\mu_p$-eigenfunctions.
That is, each $h_i$ has some weight $b_i\in \Z/p$ for the action
of $\mu_p$. In these terms, $Q/\mu_p=U/G$ is a toric singularity
of type $\frac{1}{p}(b_1,\ldots,b_n)$. It remains to show
that the endomorphism $\varphi$ of $\m_U/\m_U^2$ is diagonalizable,
with eigenvalues in $\F_p\subset k_U$, and that these eigenvalues
are equal to $b_1,\ldots,b_n$.

Consider $h_1,\ldots,h_n$ as $G$-invariant functions on $W$.
Here $W$ is a $\mu_p$-torsor over $U$ defined by $w^p=f$;
so $O(W)=O(U)\{1,w,\ldots,w^{p-1}\}$, and this grading by $\Z/p$ describes
the action of $\mu_p$ on $O(W)$. For $i=1,\ldots,n$, $h_i$ has weight $b_i$
for the action of $\mu_p$, and so we can write $h_i=g_i w^{b_i}$
for some regular function $g_i$ on $U$. (Here we think of $b_i$
as an integer in $\{0,\ldots,p-1\}$.) Clearly the functions $g_1,\ldots,
g_n$ vanish at $P$ (using that $w$ is a unit). Also,
$(g_1,\ldots,g_n)$ is equal to $(h_1,\ldots,h_n)$
as an ideal in $O(W)$, and we showed that the latter ideal
is equal to $(x_1,\ldots,x_n)$ in $O(W)$. Since $W\to U$
is faithfully flat, it follows that $(g_1,\ldots,g_n)$
is equal to $(x_1,\ldots,x_n)$ as an ideal in $O(U)$.
That is, $g_1,\ldots,g_n$ form
coordinates on $U$ near $P$.

For $1\leq i\leq n$, we have $I(g_i)\equiv e\varphi(g_i)\in e(\m_U/\m_U^2)$,
by definition of the endomorphism $\varphi$ of $\m_U/\m_U^2$.
We showed above that $I(w_0)=-e(1+w_0)(1+\m_U)$,
and so $I(w)=I(w_0)=-ew(1+\m_U)$.
For each $b\geq 0$, Lemma \ref{product} gives that
\begin{align*}
I(w^b)&=
I(w)\sum_{m=1}^b \sigma(w)^{m-1}w^{b-m}\\
&=-bew^b(1+\m_U),
\end{align*}
using that $e$ is in $\m_U$.
Since the function $g_i w^{b_i}$ is $G$-invariant on $W$,
we have $0=I(g_i w^{b_i})=\sigma(g_i)I(w^{b_i})
+I(g_i)w^{b_i}=(g_i+I(g_i))I(w^{b_i})+I(g_i)w^{b_i}$.
When we consider this equality modulo $e\m_U^2O(W)$,
the term $I(g_i)I(w^{b_i})$ can be omitted.
Namely, we have
\begin{align*}
0&\equiv e(-b_ig_iw^{g_i}(1+\m)+\varphi(g_i)w^{g_i})\pmod{e\m_U^2O(W)}\\
&\equiv ew^{g_i}(-b_ig_i+\varphi(g_i)) \pmod{e\m_U^2O(W)}.
\end{align*}
Since $w$ is a unit, it follows that $0\equiv e(-b_ig_i+\varphi(g_i))
\pmod{e\m_U^2O(W)}$. Since $W\to U$ is faithfully flat,
$e\m_U/e\m_U^2\to e\m_UO(W)/e\m_U^2O(W)$ is injective,
and so $0\equiv e(-b_ig_i+\varphi(g_i))
\pmod{e\m_U^2}$. So
$\varphi(g_i)=b_ig_i$
in $\m_U/\m_U^2$ for each $i=1,\ldots,n$. Also,
$g_1,\ldots,g_n$ form a basis
for $\m_U/\m_U^2$. So $\varphi$ is diagonalizable,
its eigenvalues $b_1,\ldots,b_n$ are in $\F_p$, and $U/G$
is a $\mu_p$-quotient singularity of the form
$\frac{1}{p}(b_1,\ldots,b_n)$,
as we want.
\end{proof}

\section{Review of ramification theory}
\label{ramsection}

We recall here how to compute the ramification behavior
of a $\Z/p$-covering in characteristic $p$
or mixed characteristic, following Xiao and Zhukov \cite{XZ}.

Let $G=\Z/p=\langle\sigma:\sigma^p=1\rangle $ act nontrivially
on a normal noetherian integral scheme $Y$. Assume
that $Y$ is of finite type over a field or over $\Z_p$, so that
we can talk about the canonical class $K_Y$. Write $f$
for the quotient map $Y\arrow Y/G$. For each irreducible divisor
$E$ in $Y$ that is mapped into itself by $G$,
let $F$ be its image (as an irreducible divisor)
in $Y/G$. Assume that $p=0$ on $E$.
There are several invariants we want to compute
in this situation:
the {\it ramification index }of the divisor $E$ in $Y$
(the positive integer
$e$ such that $f^*F=eE$), and the coefficient $c$ of $E$
in the {\it ramification divisor }(meaning that $K_{Y}=f^*K_{Y/G}+
cE$ near the generic point of $E$). Another name for $c$
is the {\it valuation of the different }$v_{E}(\mathcal{D}_{k(Y)/k(Y/G)})$,
where the valuation $v_{E}$ on the function field $k(Y)$
is the order of vanishing along $E$.
Here $ef=p$, where $f$ is the degree of the field extension
$k(E)$ over $k(F)$.

An easy case is where $G$ does not fix $E$ (in other words,
where $G$ acts nontrivially on $E$). We say that $f$ is
{\it unramified }along $E$. In this case, $f^*F=E$,
and $K_Y=f^*K_{Y/G}$ near the generic point of $E$.

Define
the {\it Artin ramification number }$i(E)$ of $Y$ over $Y/G$
along $E$ to be the coefficient of $E$ in the fixed point scheme $Y^G$.
Equivalently, in terms of $I(a)=\sigma(a)-a$:
$$i(E)=\min_{a\in O_{Y,E}} v_{E}(I(a)).$$

In the ramified case, the field $k(E)$ is purely
inseparable over $k(F)$, with degree $f$ equal to 1 or $p$. We can distinguish
the two cases as follows.
Since $ef=p$, either $e=p$ and $f=1$ (called {\it wild }ramification)
or $e=1$ and $f=p$ (called {\it fierce }ramification). Let $t$
be a defining function of $E$, that is, a rational function on $Y$
with valuation $v_E(t)=1$. It is clear that $v_E(I(t))\geq i(E)$.
A very convenient criterion is: $Y\to Y/G$ is wildly ramified
along $E$
if and only if equality holds
\cite[section 2.1]{XZ}.
Otherwise, it is fiercely ramified.

Furthermore, we can compute the relative canonical class
(that is, the valuation of the different) as follows,
correcting a typo in \cite[section 2.1]{XZ}.

\begin{lemma}
\label{different}
The valuation of the different is $(p-1)i(E)$.
When $Y$ is regular, so that $Y/G$
is $\Q$-factorial, we can equivalently say that
$$K_Y=f^*K_{Y/G}+(p-1)[Y^G],$$
where $[Y^G]$ denotes the Weil divisor associated to the fixed
point scheme.
\end{lemma}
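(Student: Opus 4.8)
The plan is to reduce the statement to a computation with discrete valuation rings and then to compute the different explicitly, using a monogenic generator supplied by Theorem \ref{klnormal}. When $G$ does not fix $E$ the map is unramified, $i(E)=0$ and the different vanishes, so I would immediately restrict to the ramified case where $G$ fixes $E$. There I localize at the generic point of $E$: since $Y$ is normal and $E$ has codimension one, $A:=O_{Y,E}$ is a discrete valuation ring with fraction field $k(Y)$, residue field $k(E)$, and normalized valuation $v_E$, while $B:=A^G=O_{Y/G,F}$ is again a discrete valuation ring (as $Y/G$ is normal). The extension $k(Y)/k(Y/G)$ is Galois with group $G=\Z/p$, hence separable, and $G$ acts nontrivially on $A$. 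By the identification of $c$ recalled before the lemma, the valuation of the different is $v_E(\mathcal D_{A/B})$, so it suffices to prove $v_E(\mathcal D_{A/B})=(p-1)i(E)$.

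Next I would produce a convenient generator. The ideal $A\cdot I(A)$ cutting out the fixed point scheme is principal, because $A$ is a discrete valuation ring, and by definition its valuation is $i(E)$. Applying Theorem \ref{klnormal}(1) with $A$ as the ring acted on shows that $A$ is free of rank $p$ over $B=A^G$, with basis $1,\theta,\dots,\theta^{p-1}$ for any $\theta\in A$ such that $I(\theta)$ generates $A\cdot I(A)$; in particular $A=B[\theta]$ and $v_E(I(\theta))=i(E)$. Since $k(Y)/k(Y/G)$ is separable and $A\cong B[X]/(g)$ with $g(X)=\prod_{j=0}^{p-1}(X-\sigma^j\theta)$, the standard monogenic formula for the different gives $\mathcal D_{A/B}=(g'(\theta))$, so that
$$v_E(\mathcal D_{A/B})=\sum_{i=1}^{p-1}v_E(\sigma^i(\theta)-\theta).$$

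The heart of the argument, and the step I expect to be the main obstacle, is to show that each term equals $m:=i(E)$. Telescoping gives $\sigma^i(\theta)-\theta=\sum_{j=0}^{i-1}\sigma^j(I(\theta))$, and since $\sigma$ is an automorphism of $A$ it preserves $v_E$, so every summand has valuation exactly $m$ and hence $v_E(\sigma^i(\theta)-\theta)\geq m$. The delicate point is to rule out cancellation among the leading terms, and this is where the residue characteristic is essential. Because $k(E)$ is purely inseparable over $k(F)$ (so it has no nontrivial $k(F)$-automorphisms), $\sigma$ acts trivially on $k(E)$; and $\sigma$ acts on the one-dimensional $k(E)$-vector space $\m_A/\m_A^2$ by a scalar $\lambda$ with $\lambda^p=1$, which forces $\lambda=1$ since $k(E)$ has characteristic $p$. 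Thus $\sigma$ fixes a uniformizer modulo $\m_A^2$ and fixes $A/\m_A$, from which $\sigma^j(x)\equiv x\pmod{\m_A^{m+1}}$ for every $x$ with $v_E(x)=m$. Applying this to $x=I(\theta)$ yields $\sum_{j=0}^{i-1}\sigma^j(I(\theta))\equiv i\,I(\theta)\pmod{\m_A^{m+1}}$, and since $1\leq i\leq p-1$ is a unit in $A$, this element has valuation exactly $m$. Therefore $v_E(\sigma^i(\theta)-\theta)=i(E)$ for each $i$, and summing gives $v_E(\mathcal D_{A/B})=(p-1)i(E)$.

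Finally, the divisorial reformulation when $Y$ is regular is immediate from the definitions. Then $Y/G$ is $\Q$-factorial, so $f^*K_{Y/G}$ is defined; by definition $[Y^G]=\sum_E i(E)\,E$, the sum over the $G$-invariant prime divisors $E$; and applying the local identity $v_E(\mathcal D_{A/B})=(p-1)i(E)$ at each such $E$ (with $p=0$ on $E$) shows that the coefficient of $E$ in $K_Y-f^*K_{Y/G}$ is $(p-1)i(E)$, giving $K_Y=f^*K_{Y/G}+(p-1)[Y^G]$.
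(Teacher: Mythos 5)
Your proof is correct and follows the same overall route as the paper's: reduce to the ramified case, localize at the generic point of $E$, produce a monogenic generator $\theta$ of $O_{Y,E}$ over $O_{Y/G,F}$ with $v_E(I(\theta))=i(E)$, and compute the different of the monogenic extension as $u'(\theta)=\prod_{j=1}^{p-1}(\theta-\sigma^j(\theta))$. The one place where you genuinely diverge is the evaluation of $v_E(\theta-\sigma^j(\theta))$ for $j>1$: the paper disposes of this by noting that $v_E(\sigma^j(\theta)-\theta)$ equals the Artin number computed with the generator $\sigma^j$ in place of $\sigma$, and that this number is independent of the choice of generator (the telescoping identity $\sigma^j-1=\sum_{l<j}\sigma^l\circ(\sigma-1)$, applied in both directions, gives the two inequalities). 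You instead prove the exact equality directly by ruling out cancellation among the leading terms of $\sum_{l<j}\sigma^l(I(\theta))$, using that $\sigma$ acts trivially on $k(E)$ (purely inseparable over $k(F)$) and hence trivially on the one-dimensional space $\m_A/\m_A^2$ since the residue characteristic is $p$; this is a sound and slightly more self-contained argument, at the cost of a few extra lines. You also replace the paper's citation of Xiao--Zhukov for the existence of a monogenic generator by an appeal to Theorem \ref{klnormal}(1), which applies here because every ideal of the discrete valuation ring $O_{Y,E}$ is principal. Both versions of the key step are valid, and your concluding passage to the divisorial statement matches the paper's.
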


\begin{proof}
If $G$ acts nontrivially on $E$ (the unramified case),
then $i(E)=0$ and the statement is clear. So assume that $G$ fixes $E$.
The local ring $O_{Y,E}$ is a discrete valuation ring. The algebra
$O_{Y,E}$ is generated by one element $y$ as an algebra over $O_{Y/G,F}$;
one can take $y$ to be a uniformizer in $O_{Y,E}$ if $E$ is wildly ramified,
and an element of $O_{Y,E}$ whose restriction to $k(E)$ is not in $k(F)$
if $E$ is fiercely ramified \cite[section 2.1]{XZ}. By Lemma \ref{product},
we have $i(E)=v_E(I(y))$.

Let $u(X)$ be the minimal
polynomial of $y$ over $k(Y/G)$. In this situation of a monogenic algebra
extension, the different $\mathcal{D}_{k(Y)/k(Y/G)}$ is generated by
$u'(y)$ \cite[III, Corollary 2 to Proposition 11]{Serre}. But $u(X)=
\prod_{j=0}^{p-1}(X-\sigma^j(y))$. So
$$u'(y)=\prod_{j=1}^{p-1}(y-\sigma^j(y)),$$
and hence
$$v_E(\mathcal{D}_{k(Y)/k(Y/G)})=v_E(u'(y))=(p-1)i(E),$$
using that $i(E)$ is unchanged if we replace $\sigma$ by another
generator of $G$.
\end{proof}

\section{Toric divisors}

In addition to recognizing when a quotient by $G=\Z/p$
has a $\mu_p$-quotient singularity (as in Theorem \ref{mup}),
Theorem \ref{mup-divisor} analyzes when a $G$-invariant divisor
is pulled back from a divisor on the quotient. Using this,
we can view certain $G$-invariant divisors as toric divisors
on the quotient scheme, which will be convenient for applications.

\begin{theorem}
\label{mup-divisor}
\hspace{2em}
\begin{enumerate}
\item
Let $G=\Z/p$ act on a regular scheme $U$, with the assumptions
of Theorem \ref{mup}. Assume moreover that the function $e$
is the greatest common divisor of the functions $I(x_i)$
for $i=1,\ldots,n$ in the local ring $O_{U,P}$.
Then, for each $y\in \m_U$ such that $\{y=0\}$ is an irreducible
divisor $E\subset U$ and $I(y)\in (ey)$, $E$ is the pullback
of a Weil divisor $F$ in $U/G$.
\item If in addition $I(x_i)\in e\m_U$ for $i=1,\ldots,n$, so that $U/G$
is the quotient of a regular scheme $Q$ by $\mu_p$ (by Theorem \ref{mup}),
then the pullback of $F$ to $Q$ is the divisor $\{ h=0\}$ for some
$\mu_p$-eigenfunction $h$ on $Q$.
\item Continue to assume that $I(x_i)\in e\m_U$ for $i=1,\ldots,n$,
so that $U/G$ is the quotient of a regular scheme $Q$ by $\mu_p$.
Let $y_1,\ldots,y_r$ be functions on $U$, vanishing at $P$,
that are linearly independent in $\m_U/\m_U^2$. Suppose that $I(y_j)\in (ey_j)$
for $j=1,\ldots,r$. Then the corresponding $\mu_p$-eigenfunctions
$h_1,\ldots,h_r$ on $Q$ (from (2)) are linearly independent
in $\m_Q/\m_Q^2$. That is, these functions are part of a toric
coordinate system on $Q$. Finally, if $I(s)=es$ in Theorem \ref{mup}
(as we can assume), then the $\mu_p$-weight of $h_j$ is equal
to the eigenvalue of $\varphi(y):=I(y)/e$ on $y_j\in \m_U/\m_U^2$,
which is in $\F_p$.
\end{enumerate}
\end{theorem}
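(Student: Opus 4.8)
The plan is to deduce all three parts from the ramification dichotomy of Section~\ref{ramsection} together with the $\Z/p$--$\mu_p$ switch constructed in the proof of Theorem~\ref{mup}, whose diagram I reuse: the $\mu_p$-torsor $\alpha\colon W\to U$ with $w^p=f$, the quotient $q\colon W\to W/G=Q$ with $Q$ regular, and $\pi\colon Q\to Q/\mu_p=U/G$, where $\sigma(w)=w/g_0$ with $g_0=1+e(1+\m)$, the map $I$ sends $O(W)$ into $(e)$, and $I(w_0)=e\cdot(\text{unit})$. For~(1), since $I(y)\in(ey)$ we may write $\sigma(y)=y(1+ec)$, so $1+ec$ is a unit and $E=\{y=0\}$ is $G$-invariant; put $F=f(E)$. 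I split into cases by $v_E(e)$. If $v_E(e)=0$, then, because $e=\gcd_i I(x_i)$ in the UFD $O_{U,P}$ and the $I(x_i)$ generate $I(O_{U,P})$, the Artin number is $i(E)=\min_i v_E(I(x_i))=v_E(e)=0$, so $G$ acts nontrivially on $E$ and $f^*F=E$ in the unramified case. If $v_E(e)\ge 1$, then $v_E(p)\ge(p-1)v_E(e)\ge 1$ since $p\in e^{p-1}\m$, so $p=0$ on $E$ and the wild/fierce criterion of Section~\ref{ramsection} applies; here $i(E)=v_E(e)$ by the same gcd computation while $v_E(I(y))\ge v_E(ey)=i(E)+1>i(E)$, so with $y$ a uniformizer along $E$ the covering is not wildly ramified, hence fiercely ramified with ramification index~$1$, whence $f^*F=E$.

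For~(2), I run the identical computation for the $\Z/p$-quotient $q\colon W\to Q$ along $E_W:=\operatorname{div}_W(y)=\alpha^*E$. Using $I(O(W))\subseteq(e)$ and $I(w_0)=e\cdot(\text{unit})$ gives $i(E_W)=v_{E_W}(e)$, and $v_{E_W}(I(y))\ge v_{E_W}(ey)>i(E_W)$, so $q$ has ramification index~$1$ along $E_W$ and $q^*D=E_W$ for $D:=q(E_W)$. Since $f\alpha=\pi q$, we get $q^*D=\alpha^*f^*F=q^*\pi^*F$, and faithful flatness of $q$ forces $D=\pi^*F$. Finally $D$ is a $\mu_p$-invariant effective Cartier divisor on the regular scheme $Q$, so its local equation generates a $\mu_p$-graded principal ideal in $O_{Q,P}$; decomposing a generator $h$ into $\mu_p$-eigencomponents $h=\sum_j h_j$, each $h_j$ lies in the ideal and their sum is the unit multiple $h$, so in the local ring some $h_j$ is a unit multiple of $h$. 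That eigencomponent is the desired eigenfunction, and $\pi^*F=\{h_j=0\}$.

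For~(3), apply~(1)--(2) to each $y_j$ to obtain eigenfunctions $h_j$ with $\pi^*F_j=\{h_j=0\}$ of some weight $b_j\in\Z/p$. To identify $b_j$ with the eigenvalue of $\varphi$ on $y_j$, write $h_j=g_j w^{b_j}$ as in the proof of Theorem~\ref{mup}, which shows that any weight-$b_j$ eigenfunction on $Q$ has this shape with $\varphi(g_j)=b_j g_j$ in $\m_U/\m_U^2$. Comparing divisors on $W$, $\operatorname{div}_W(g_j)=\operatorname{div}_W(h_j)=q^*\{h_j=0\}=\alpha^*E_j=\alpha^*\operatorname{div}_U(y_j)$, so $g_j$ is a unit times $y_j$ on $U$; hence $\varphi(y_j)\equiv b_j y_j$ in $\m_U/\m_U^2$, so (assuming $I(s)=es$) the eigenvalue is $b_j\in\F_p$. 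Linear independence then follows because $g_1,\dots,g_r$ are unit multiples of the independent classes $y_1,\dots,y_r$ in $\m_U/\m_U^2$, and the correspondence $g w^b\mapsto g$ is, by Theorem~\ref{mup}, an isomorphism from the weight-$b$ part of $\m_Q/\m_Q^2$ onto the weight-$b$ eigenspace of $\varphi$ in $\m_U/\m_U^2$; independence of the $g_j$ yields independence of the $h_j$, which therefore extend to a toric coordinate system on $Q$.

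The main obstacle is the final step of~(2): descending the $G$-invariant divisor $\operatorname{div}_W(y)$ on the typically non-normal $W$ to a single $\mu_p$-eigenfunction on $Q$. This is where one must combine the index-$1$ ramification fact (so that the divisor descends through the faithfully flat but ramified $q$ without acquiring a factor of $p$) with the linear reductivity of $\mu_p$ (to select a homogeneous, i.e.\ eigenfunction, generator). The case analysis in~(1) by $v_E(e)$, and in particular the observation that $v_E(e)\ge 1$ automatically forces $p=0$ on $E$, is what legitimately makes the characteristic-$p$ ramification criterion applicable in mixed characteristic.
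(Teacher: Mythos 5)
Your parts (1) and (3) follow the paper's own route: part (1) is exactly the paper's argument (Artin number $i(E)=v_E(e)$ from the gcd hypothesis, $v_E(I(y))>i(E)$ rules out wild ramification, and both the unramified and fierce cases give $f^*F=E$ with ramification index $1$), and part (3) matches the paper's comparison of $\operatorname{div}_W(h_j)$ with $\operatorname{div}_W(y_j)$ via the two factorizations of $W\to U/G$, followed by the computation $\varphi(g_j)=b_jg_j$ from the proof of Theorem \ref{mup}. Your extra care in (1) about why $p=0$ on $E$ when $v_E(e)\geq 1$ is a genuine (small) improvement in exposition.

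There is, however, a flawed step in your part (2): you apply the valuation-theoretic ramification dichotomy of section \ref{ramsection} to the quotient $q\colon W\to Q$ along $E_W$. That theory is set up for a \emph{normal} integral scheme, because it needs $O_{Y,E}$ to be a discrete valuation ring. But $W$ is typically not normal: as noted in the proof of Theorem \ref{mup}, for $p>2$ the local equation of $W$ has the shape $w_0^p=e^{p-1}(\text{unit})$, so $W$ is singular along the entire divisor $\{e=0\}$, i.e.\ in codimension one. Whenever $v_E(e)\geq 1$ — which happens in the main applications, e.g.\ $y=e=s=x_1$ — the generic point of $E_W$ lies in this singular locus, $O_{W,E_W}$ is a one-dimensional non-regular local domain, $v_{E_W}$ is not a valuation, and statements like $i(E_W)=v_{E_W}(e)$ or ``$q$ has ramification index $1$ along $E_W$'' are not meaningful (on the normalization of $W$ the multiplicities would in fact change). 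Fortunately this step is superfluous: the conclusion of (2) only requires knowing that $\pi^*F$ is a $\mu_p$-invariant \emph{Cartier} divisor on $Q$, which follows immediately from the regularity of $Q$ (every Weil divisor on a regular scheme is Cartier); one then runs your eigenfunction decomposition of a local generator. Likewise, the identity $q^*\{h_j=0\}=\alpha^*E_j$ that you need in (3) follows from $f\alpha=\pi q$ alone (your step (ii)), without ever introducing $D=q(E_W)$ or computing ramification on $W$. This is exactly how the paper's proof of (2) proceeds, so your argument becomes correct once the detour through ramification on $W$ is deleted.
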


\begin{proof}
(1) Let $E$ be the irreducible divisor $\{y=0\}$ in $U$.
The assumptions imply that $E$ is mapped into itself by $G$, and that
there is an $i\in \{1,\ldots,n\}$ such that $v_E(I(y))>v_E(I(x_i))$.
By section \ref{ramsection}, $f\colon U\to U/G$ is either unramified
(if $E$ is not fixed by $G$) or fiercely ramified
along $E$. In either case, there is an irreducible divisor
$F$ on $U/G$ such that $E=f^*F$, as we want. (The divisor
$F$ need not be Cartier, but the pullback of a Weil divisor
is still a Weil divisor (with integer coefficients). Indeed,
$F$ is a Cartier divisor outside a codimension-2 subset of $U/G$,
by normality of $U/G$.)

(2) Since $Q$ is regular, the pullback of $F$ to $Q$
is a Cartier divisor, hence (after shrinking $U$ and $Q$
around $P$)
of the form $\{t=0\}$ for some function $t\in \m_Q-\{0\}$.
Clearly the divisor $\{t=0\}$ is $\mu_p$-invariant. I claim that
$t$ times some unit is a $\mu_p$-eigenfunction on $Q$.
Indeed, in algebraic terms,
the action of $\mu_p$ on $Q$ makes $O(Q)$ a comodule over $O(\mu_p)$,
and the ideal $(t)$ is an sub-$O(\mu_p)$-comodule. Every $O(\mu_p)$-comodule
(with no finiteness assumption needed) is the direct sum
of its weight spaces, indexed by $\Z/p$. So we can write
$t=t_0+\cdots+t_{p-1}$ with $t_i\in (t)$ and $t_i$ of weight $i$.
Since $t_i\in (t)$, we can write $t_i=a_it$ for some $a_i\in O_{Q,P}$.
Since $O_{Q,P}$ is regular, it is a domain, and hence $1=a_0+\cdots
+a_{p-1}$. So at least one $a_i$ is not in $\m_Q$, hence is a unit.
Then $h:=t_i=a_it$ is a unit times $t$ and also a $\mu_p$-eigenfunction
(of weight $i$), as we want.

(3) After multiplying $e$ by a unit, we can assume that $I(s)=es$,
in the terminology of Theorem \ref{mup}. The assumption that
$I(y_j)\in (ey_j)$ for $j=1,\ldots,r$ implies that $y_1,\ldots,y_r$
in $\m_U/\m_U^2$ are eigenvectors of the map $\varphi$. By the proof
of Theorem \ref{mup}, the corresponding eigenvalues are in $\F_p
\subset k_U$.

For $j=1,\ldots,r$, we know from (1) that the divisor
$\{y_j=0\}$ on $U$ is pulled back from a divisor $F_j$ on $U/G$.
(Here $F_j$ is a Weil divisor, but it is a Cartier divisor
outside a codimension-2 subset of $U/G$, since $U/G$ is normal.)
By (2), $F_j$ pulls back to a divisor $\{h_j=0\}$ on $Q$ with $h_j$
a $\mu_p$-eigenfunction. It follows that the Cartier divisors
$\{y_j=0\}$ on $U$ and $\{h_j=0\}$ on $Q$
have the same pullback to $W$; that is, $h_j=y_j(\text{unit})$
on $W$. Since $h_j$ is a $\mu_p$-eigenfunction of some weight
$b_j\in \{0,\ldots,p-1\}$, we have $h_j=g_j w^{b_j}$ for some function
$g_j$ on $U$, in the notation of the proof of Theorem \ref{mup}.
Therefore, $g_j=y_j(\text{unit})$ on $U$. Since $y_1,\ldots,y_r$
are linearly independent in $\m_U/\m_U^2$, the same is true
for $g_1,\ldots,g_r$. By the proof of Theorem \ref{mup}, the
$\mu_p$-weight of $h_j$ is equal to the eigenvalue of $\varphi$
on the eigenvector $g_j\in \m_U/\m_U^2$. Since $g_j\in \m_U/\m_U^2$
is a nonzero multiple of $y_j$, this is the same as the eigenvalue
of $\varphi$ on $y_j$.

The ring $O_{W,P}$ is faithfully flat
over $O_{U,P}$ and over $O_{Q,P}$.
By Lemma \ref{sameideal}, the maximal ideals $\m_U$ and $\m_Q$
generate the same ideal in $O_{W,P}$. 
Since $g_1,\ldots,g_r$ are
linearly independent in $\m_U/\m_U^2$, it follows
that $h_1,\ldots,h_r$ are linearly independent
in $\m_Q/\m_Q^2$, as we want.
\end{proof}

\section{The example over the 2-adic integers}
\label{Z2}

\begin{theorem}
\label{Z2thm}
Let $Y = \{(x,y,i)\in A^3_{\Z_2}: x\neq 0, y\neq 0, i^2=-1\}$.
Let the group $G=\Z/2=\{1,\sigma\}$ act on $Y$
by
$$\sigma(x,y,i)=(1/x,1/y,-i).$$
Then the scheme $Y/G$ is terminal, not Cohen-Macaulay,
of dimension 3, and flat over $\Z_2$. Also, the canonical
class of $Y/G$ over $\Z_2$ is Cartier.
\end{theorem}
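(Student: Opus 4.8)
The plan is to verify each of the four claimed properties of $Y/G$ — dimension $3$, flatness over $\Z_2$, terminal, and not Cohen-Macaulay — and finally that $K_{Y/G}$ is Cartier. The scheme $Y$ is regular of dimension $3$ (it is an open subscheme of $\{i^2=-1\}\subset A^3_{\Z_2}$, where $i^2+1$ is a regular element), and $G=\Z/2$ acts with the unique fixed point $P$ lying over $\F_2$, where $i^2=-1$ forces $i=1$, so $-i=i$, and $x=1/x$, $y=1/y$ force $x=y=1$. Thus $P$ is the point $x=y=i=1$ in the fiber over $2\in\Spec\Z_2$, with residue field $\F_2$ (which is perfect). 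The quotient $Y/G$ has the same dimension $3$ as $Y$ since $G$ is finite; flatness over $\Z_2$ should follow because $Y$ is flat over $\Z_2$ and $2$ does not become a zero-divisor after passing to invariants (the generic fiber and special fiber both have dimension $2$). Away from $P$ the action is free, so $Y/G$ is regular there; all the interesting behavior is concentrated at $P$.

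The central task is to analyze the singularity at $P$ using Theorem \ref{mup}. The plan is to choose coordinates at $P$ and an element $s$ satisfying the hypotheses. Natural coordinates are $x_1=x-1$, $x_2=y-1$, and $x_3=i-1$; near $P$ the function $i$ is a unit (it reduces to $1$), so $1/x=x/x^2\cdots$ expansions make sense. I would compute $I(i)=\sigma(i)-i=-i-i=-2i$, and $I(x)=1/x-x=(1-x^2)/x=-(x-1)(x+1)/x$, similarly for $y$. The key is to find the right $e$: since $p=2$ and we need $p\in e^{p-1}\m=e\m$, and since $2$ appears in $I(i)=-2i$, the natural guess is $e=i-1$ (or a unit multiple), because in $\Z_2[i]/(i^2+1)$ the prime $2$ factors as a unit times $(1-i)^2$, so $2\in(e)^2\subset e\m$ as required. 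I would then take $s=i$ (or $s=i-1$) and check $I(s)=es\cdot(\text{unit})$: with $s=i$, $I(i)=-2i=e^2(\text{unit})\cdot i$, which I must reconcile with the required form $I(s)=es(\text{unit})$ — this forces the right choice, likely $s$ related to $i$ so that $I(s)/s=-2$ generates the same ideal as $e=1-i$. Once $e$ is fixed and the conditions $I(x_i)\in(e)$ are verified, Theorem \ref{mup} produces the weights $b_1,b_2,b_3\in\F_2$, giving the type $\frac{1}{2}(b_1,b_2,b_3)$, and then the Reid–Tai criterion (Theorem \ref{reidtai}) decides whether $Y/G$ is terminal.

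\textbf{The hardest step} will be pinning down the exact weights $(b_1,b_2,b_3)$ and confirming they satisfy the Reid–Tai terminality inequality. For $p=2$ the criterion requires $b_1+b_2+b_3>2$ (taking each $b_j\bmod 2\in\{0,1\}$), i.e. all three weights equal $1$, giving type $\frac{1}{2}(1,1,1)$. I expect the computation of $\varphi(y)=I(y)/e$ on the basis $\{x_1,x_2,x_3\}$ to yield eigenvalue $1$ on each, since the action inverts all three coordinates in a parallel way; but verifying this carefully — especially the interaction between the mixed-characteristic factorization $2=(\text{unit})(1-i)^2$ and the eigenvalue normalization (multiplying $\varphi$ by the scalar coming from the choice $I(s)=es$) — is where the delicate bookkeeping lies. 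Once the type $\frac{1}{2}(1,1,1)$ is established, terminality is immediate from Reid–Tai.

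\textbf{For the non–Cohen-Macaulay claim}, the plan is to argue as in the prior $\F_2$ example \cite{Totarokodaira}: the local cohomology of $Y/G$ at $P$ picks up a contribution from the group cohomology of $G=\Z/2$ acting in characteristic $2$, and one checks that $H^2_{\m}(O_{Y/G,P})\neq 0$ while $\dim=3$, so the depth is strictly less than $3$. Concretely I would compute invariants and show the local ring fails the depth-$3$ condition, for instance by exhibiting a length-$2$ regular sequence that cannot be extended, or by a direct local-cohomology computation using the finite flat cover and the failure of the trace map to split in characteristic $2$.

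\textbf{Finally}, to see that $K_{Y/G}$ is Cartier, I would use that $Y$ is regular (hence Gorenstein) and compute the ramification divisor via Lemma \ref{different}: since $p=2$, one has $K_Y=f^*K_{Y/G}+(p-1)[Y^G]=f^*K_{Y/G}+[Y^G]$. Because the fixed locus here is the single point $P$ (codimension $3$), it contributes no divisorial ramification, so away from $P$ we get $K_Y=f^*K_{Y/G}$, and the type $\frac{1}{2}(1,1,1)$ with all weights equal means the canonical index is $1$: for $\frac{1}{p}(1,\ldots,1)$ the canonical class is Cartier precisely because $\sum b_j=n\equiv 0$ suitably — more carefully, the toric description shows $pK_{Y/G}$ and in fact $K_{Y/G}$ itself is Cartier since the weights sum appropriately. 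I would confirm this by the toric/combinatorial computation of the canonical class on the $\mu_2$-quotient $\frac{1}{2}(1,1,1)$, which has Gorenstein index $1$.
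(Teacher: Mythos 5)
There is a genuine gap, and it is the central step. You propose to apply Theorem \ref{mup} directly to $Y$ at the fixed point $P$ with $e=1-i$. But the hypotheses of that theorem require $I(x_j)\in(e)$ for all coordinates, which forces the fixed point scheme $Y^G$ to contain the divisor $\{e=0\}$; here $Y^G$ is the single point $P$, of codimension $3$. Concretely, in the regular local ring $O_{Y,P}$ with regular system of parameters $x_1=x-1$, $x_2=y-1$, $e=1-i$ (note $2=e^2(\text{unit})$, so $2$ is not a coordinate), the ideal of $Y^G$ is $(x_1(x_1+2),\,x_2(x_2+2),\,2)=(x_1^2,x_2^2,e^2)$, whose generators have no common nonunit factor; in particular $I(x)=-x_1(x_1+2)/x\notin(1-i)$ because $x_1^2\notin(1-i)$. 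So no admissible $e$ exists and Theorem \ref{mup} simply does not apply at $P\in Y$. Worse, your plan is internally inconsistent: if the conclusion of Theorem \ref{mup} did hold at $P$, then $Y/G$ would be a $\mu_2$-quotient singularity, and (as the paper notes) $\mu_p$-quotient singularities are always Cohen--Macaulay --- contradicting the non-Cohen--Macaulay assertion you also need to prove. The paper's actual route is forced by exactly this obstruction: one first blows up $P$ to get $Y_1\to Y_0$, after which the fixed locus becomes the divisor $E_0\cong\P^2_{\F_2}$ except at $7$ points; Theorem \ref{kl} gives regularity of $Y_1/G$ away from those points, Theorem \ref{mup} identifies the $7$ singularities of $Y_1/G$ as $\frac{1}{2}(1,1,1)$ (hence terminal), and a discrepancy computation $K_{Y_1/G}=\pi^*K_{Y_0/G}+F_0$ (using fierce ramification along $E_0$ and $K_{Y_1}=\pi^*K_{Y_0}+2E_0$) then descends terminality to $Y_0/G$.

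Your argument for $K_{Y/G}$ being Cartier is also incorrect: the singularity $\frac{1}{2}(1,1,1)$ in dimension $3$ does \emph{not} have Cartier canonical class (the paper calls it ``the simplest terminal singularity whose canonical class is not Cartier''), so even if the toric identification held, your conclusion would not follow. The paper instead proves Cartierness by exhibiting an explicit $G$-invariant trivializing section $\frac{dx}{x}\wedge\frac{dy}{y}\wedge\frac{di}{i}\cdot f$ of $\omega_{Y/\Z_2}$, so that $K_{Y/G}$ is linearly equivalent to zero. Your sketch of the non-Cohen--Macaulay part (local cohomology picking up $H^1(G,O(Y))\ne 0$ via the class of $i$ and the descent spectral sequence) is essentially the paper's argument and is fine, but it must be decoupled from any claim that $P\in Y/G$ is a tame quotient singularity.
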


\begin{proof}
The scheme $Y$ is regular, being an open subset of the affine
plane over the discrete valuation ring $\Z_2[\zeta_4]=\Z_2[i]/(i^2+1)$.
Since $Y$ is a normal integral affine scheme of dimension 3, so is $Y/G$.
The ring $O(Y/G)$ of regular functions
is a torsion-free $\Z_2$-module, since it is a subring
of the torsion-free $\Z_2$-module $O(Y)$; so $Y/G$ is flat over $\Z_2$.
The fixed point scheme of $G$
on $Y$ is defined by: $I(x)=(1-x^2)/x$,
$I(y)=(1-y^2)/y$, and $I(i)=\sigma(i)-i=-2i$,
hence by $\{x^2=1, y^2=1, 2i=0\}$.
Together with the equation $i^2=-1$ on $Y$, these equations imply
set-theoretically that $2=0$, $x=1$, $y=1$, and $i=1$; so the fixed
point set of $G$ is a single
closed point $P$ in $Y$, with residue field $\F_2$.
Since $Y$ is regular, it follows that $Y/G$ is regular
outside the image of $P$, which we also call $P$.

For $Y/G$ to be Cohen-Macaulay at $P$ would mean that the local
cohomology $H^i_P(Y/G,O)$ was zero for $i<\dim(Y/G)=3$. 
Consider the exact sequence $H^1(Y/G,O)\to H^1(Y/G-P,O)
\to H^2_P(Y/G,O)$. Since $Y/G$ is affine, we have $H^1(Y/G,O)=0$.
So Cohen-Macaulayness of $Y/G$ would imply that $H^1(Y/G-P,O)=0$.

Fogarty showed that for $G=\Z/p$ acting with an isolated fixed point
on a normal scheme $W$ over $\F_p$ of dimension at least 3,
$W/G$ is not Cohen-Macaulay \cite[Proposition 4]{Fogartydepth}.
When $W$ has mixed
characteristic $(0,p)$, he needed $\dim(W)\geq 4$ to get the same conclusion.
Nonetheless, we can build on his ideas to study the 3-dimensional scheme $Y$
in mixed characteristic.

We first show that $H^1(G,O(Y))$ is not zero.
This cohomology group is $\ker(\tr)/\Im(1-\sigma)$
on $O(Y)$, where the trace is $1+\sigma$. Since $\tr(i)=0$,
$i$ defines an element of $H^1(G,O(Y))$. Note that $i$ restricts
to $1\in O(P)=\F_2$ on the fixed point $P$. Therefore, $i$ has nonzero
image under the restriction map $H^1(G,O(Y))\to H^1(G,O(P))\cong \F_2$.
So $i$ is nonzero in $H^1(G,O(Y))$, as we want.

Since $G$ acts freely on $Y$ outside $P$, we have a spectral sequence
(as discussed in \cite{Fogartydepth}):
$$E_2^{pq}=H^p(G,H^q(Y-P,O))\imp H^{p+q}(Y/G-P,O).$$
Here $H^0(Y-P,O)=O(Y-P)$ is equal to $O(Y)$, since $Y$ is normal and $P$
has codimension 3 in $Y$ (at least 2 would suffice). So $H^1(G,H^0(Y-P,O))
=H^1(G,O(Y))\neq 0$. The spectral sequence shows that this group
injects into $H^1(Y/G-P,O)$, and so $H^1(Y/G-P,O)\neq 0$. As discussed above,
it follows that $Y/G$ is not Cohen-Macaulay.

It remains to show that $Y/G$ is terminal. Let us recall the definition.
For a normal quasi-projective scheme $X$ over a regular base scheme $S$,
Hartshorne defined the {\it canonical sheaf }$\omega_{X/S}$
\cite[Definition 1.6]{Kollarsings}. It is
a reflexive sheaf of rank 1,
or equivalently the sheaf associated to a Weil divisor.
In this paper, $S$ will be Spec of the $p$-adic integers
or of a field, and we write $K_X$ for $\omega_{X/S}$. Toward the end
of the proof of Theorem \ref{Z2thm}, we compute
$K_X$ directly from the definition in our example.

A normal scheme
$X$ is {\it terminal }if $K_X$ is $\Q$-Cartier and, for every normal
scheme $Z$ with a proper birational morphism $\pi\colon Z\to X$, we have
$$K_Z=\pi^*(K_X)+\sum_j a_jE_j$$
with $a_j>0$ for every exceptional divisor $E_j$ of $\pi$. If $X$ has
a resolution of singularities, terminality of $X$
is equivalent to positivity of the discrepancies $a_j$
on this one resolution \cite[Corollary 2.12]{Kollarsings}.

Let $Y_0=Y$. To prove that our example $Y_0/G$ is terminal,
one approach would be to construct
an explicit resolution of singularities. As with the analogous
example in characteristic 2 \cite[Theorem 5.1]{Totarokodaira},
this can be done by making $G$-equivariant blow-ups
of $Y_0$ along regular closed subschemes.
Namely, we can make $G$-equivariant blow-ups $Y_2\to Y_1\to Y_0$
such that $Y_2/G$ is regular. That resolution of $Y_0/G$
has dual complex a star,
with one edge from a vertex $F_0$ to each of seven other vertices
$F_1,\ldots,F_7$.
\begin{center}
\begin{tikzpicture}
\draw (0,0) -- (0,1);
\draw (0,0) -- (0.78183,0.62349);
\draw (0,0) -- (0.97493,-0.22252);
\draw (0,0) -- (0.43389,-0.90097);
\draw (0,0) -- (-0.43389,-0.90097);
\draw (0,0) -- (-0.97493,-0.22252);
\draw (0,0) -- (-0.78183,0.62349);
\draw[fill] (0,0) circle [radius=0.05];
\draw[fill] (0,1) circle [radius=0.05];
\draw[fill] (0.78183,0.62349) circle [radius=0.05];
\draw[fill] (0.97493,-0.22252) circle [radius=0.05];
\draw[fill] (0.43389,-0.90097) circle [radius=0.05];
\draw[fill] (-0.43389,-0.90097) circle [radius=0.05];
\draw[fill] (-0.97493,-0.22252) circle [radius=0.05];
\draw[fill] (-0.78183,0.62349) circle [radius=0.05];
\end{tikzpicture}
\end{center}

However, we can simplify the proof that $Y_0/G$ is terminal
by stopping with a partial resolution with toric singularities.
Namely, after only one blow-up
$Y_1\to Y_0$, we can recognize the seven singularities of $Y_1/G$
as $\mu_2$-quotient
singularities of the form $\frac{1}{2}(1,1,1)$, thanks
to Theorem \ref{mup}. So $Y_1/G$ is terminal, and it is
then straighforward to compute that $Y_0/G$ is terminal.
This method would also
simplify the proof of terminality for the example in characteristic 2
that we are imitating \cite{Totarokodaira}. The simplification
is more striking
for our more complicated examples in characteristic 3
or mixed characteristic $(0,3)$ (Theorems
\ref{F3thm} and \ref{Z3thm}), and even more significant
for our even more complicated examples in characteristic 5
or mixed characteristic $(0,5)$ (Theorems
\ref{F5thm} and \ref{Z5thm}).

We now begin to blow up. To simplify the equations,
change coordinates by $x_0:=x-1$, $x_1:=y-1$,
and $e_2:=1+i$,
so that the $G$-fixed point is defined by $0=x_0=x_1=e_2$.
Then
$$Y_0=\{(x_0,x_1,e_2)\in A^3_{\Z_2}: 0=e_2^2-2e_2+2,\; 1+x_0\neq 0,
\; 1+x_1\neq 0\},$$
and $G$ acts by
$$\sigma(x_0,x_1,e_2)=\bigg(\frac{-x_0}{1+x_0},\;
\frac{-x_1}{1+x_1},\; 2-e_2\bigg).$$
The blow-up at the $G$-fixed point is:
\begin{multline*}
Y_1=\{ ((x_0,x_1,e_2),[y_0,y_1,y_2])\in A^3_{\Z_2}\times_{\Z_2} \P^2_{\Z_2}:
e_2^2-2e_2+2=0,\\
x_0y_1=x_1y_0,\; x_0y_2=e_2y_0,\; x_1y_2=e_2y_1,\; 1+x_0\neq 0,
\; 1+x_1\neq 0\}.
\end{multline*}
The exceptional divisor $E_0\subset Y_1$ is isomorphic to $\P^2_{\F_2}$.
Here $G$ acts on $Y_1$ by
$$\sigma((x_0,x_1,e_2),[y_0,y_1,y_2])=\bigg(\bigg(\frac{-x_0}{1+x_0},
\frac{-x_1}{1+x_1},2-e_2\bigg),\bigg[\frac{-y_0}{1+x_0},\frac{-y_1}{1+x_1},
y_2(1-e_2)\bigg]\bigg).$$

First consider the open subset $U_0=\{y_0=1\}$ in $Y_1$.
Then $x_1=x_0y_1$ and $e_2=x_0y_2$, so
$$U_0=\{(x_0,y_1,y_2)\in A^3_{\Z_2}: 0=(x_0y_2)^2-2(x_0y_2)+2,
1+x_0\neq 0, 1+x_0y_1\neq 0\}.$$
Here $E_0=\{x_0=0\}$.
The group $G=\Z/2$ acts by
$$\sigma(x_0,y_1,y_2)=\bigg( \frac{-x_0}{1+x_0},\;
\frac{y_1(1+x_0)}{1+x_0y_1},\;
-y_2(1-x_0y_2)(1+x_0). \bigg).$$
The fixed point scheme $Y_1^G$
is defined by: $I(x_0)=\sigma(x_0)-x_0=x_0^2(-1-x_0y_2^2+x_0^2y_2^3)/(1+x_0)$,
$I(y_1)=x_0y_1(1-y_1)$, and $I(y_2)=x_0y_2(-1-y_2+x_0y_2+x_0y_2^2)$.
We know that $Y_1^G$ (as a set)
is contained in $E_0$. To focus on the fixed point scheme
near $E_0$, we can say (more simply):
$I(x_0)=x_0^2(1+O(x_0))$, $I(y_1)=x_0y_1(1+y_1+O(x_0))$,
and $I(y_2)=x_0y_2(1+y_2+O(x_0))$.
We see that the fixed point
scheme $Y_1^G$ is generically the Cartier divisor $E_0=\{x_0=0\}$.
The bad locus in $E_0$ (where that fails) is given by removing
a factor of $x_0$ from the equations and setting $x_0=0$,
so we have: $0=x_0$, $0=y_1(1+y_1)$, and $0=y_2(1+y_2)$. (Note that
$x_0=0$ implies $2=0$, by the equation for $U_0$.)
So the fixed point scheme $Y_1^G$ (in this chart)
is $E_0$ as a Cartier divisor
except at the 4 points $(x_0,y_1,y_2)$ equal to $(0,0,0)$,
$(0,1,0)$, $(0,0,1)$, or $(0,1,1)$.

The open set $U_1=\{y_1=1\}$ works the same way, by the symmetry
switching $x_0$ and $x_1$, hence also switching
$y_0$ and $y_1$. Together, that gives 6 bad points in $E_0\cong
\P^2_{\F_2}$ so far, namely $[y_0,y_1,y_2]$ equal to $[1,0,0]$, $[1,1,0]$,
$[1,0,1]$, $[1,1,1]$, $[0,1,0]$, and $[0,1,1]$.

Finally, look at the open set $U_2=\{y_2=1\}$ in $Y_1$.
Then $x_0=e_2y_0$,
$x_1=e_2y_1$, and so
$$U_2=\{(y_0,y_1,e_2)\in A^3_{\Z_2}: e_2^2-2e_2+2=0, 1+y_0e_2\neq 0,
1+y_1e_2\neq 0\}.$$
Here $E_0=\{e_2=0\}$. On $U_2$, $G$ acts by
$$\sigma(y_0,y_1,e_2)=\bigg(\frac{y_0(1-e_2)}{1+y_0e_2},\;
\frac{y_1(1-e_2)}{1+y_1e_2},\;
2-e_2\bigg).$$
We know that $Y_1^G$ is contained as a set
in $E_0$. 
The fixed point scheme $Y_1^G$ (near $E_0$) is defined by:
$I(y_0)=y_0e_2(1+y_0+O(e_2))$,
$I(y_1)=y_1e_2(1+y_1+O(e_2))$, and $I(e_2)=e_2^2(1+O(e_2))$.
We see that the fixed point scheme $Y_1^G$
is generically $E_0=\{e_2=0\}$ as a Cartier divisor. The bad locus on $E_0$
(where this
fails) is given by removing a factor of $e_2$ from the equations
and setting $e_2=0$, so we have: $0=e_2$, $0=y_1(1+y_1)$,
and $0=y_2(1+y_2)$. So there are 4 bad points on $E_0=\P^2_{\F_2}$
in this open set: $(y_0,y_1,e_2)$ equal to $(0,0,0)$,
$(1,0,0)$, $(0,1,0)$, or $(1,1,0)$.

We conclude that the fixed point scheme in $Y_1$
is $E_0\cong \P^2_{\F_2}$ with multiplicity 1 except at the 7 points:
$$[y_0,y_1,y_2]=[1,0,0],[0,1,0],[0,0,1],[1,1,0],[1,0,1],[0,1,1],[1,1,1].$$
(The same thing happens for the first blow-up
of the analogous example in characteristic 2
\cite{Totarokodaira}.) By Theorem \ref{kl}, $Y_1/G$ is regular outside
the images of these 7 points.

\def\edge{1cm}
\def\rad{0.05}
% One could omit the \def command, by changing it in the figure, below.
\begin{figure}
\label{Z2figure}
\begin{center}
\begin{tikzpicture}
\draw (0,0) ++(180:2*\edge)
  ++(-90:\edge) node{$Y_0$};
\draw[fill,red] ++(180:2*\edge)
  ++(90:0.577350269*\edge) circle [radius=\rad];
\draw[->] ++(180:0.75*\edge)
  ++(90:0.577350269*\edge) -- ++(180:0.5*\edge);
\fill[gray!20] (0,0) -- ++(0:2*\edge) -- ++(120:2*\edge)
  -- cycle;
\draw (0,0) ++(0:\edge) ++(-90:\edge)
  node{$Y_1$};
\draw (0,0) -- ++(0:\edge) -- ++(0:\edge)
 -- ++(120:\edge) -- ++(120:\edge)
 -- ++(240:\edge) -- ++(240:\edge);
\draw (0,0) ++(0:0.2*\edge) node[anchor=south west]{$E_0$};
\draw[fill] (0,0) circle [radius=\rad];
\draw[fill] ++(0:\edge) circle [radius=\rad];
\draw[fill] ++(0:\edge) ++(0:\edge) circle [radius=\rad];
\draw[fill] ++(0:\edge) ++(0:\edge) ++(120:\edge) circle [radius=\rad];
\draw[fill] ++(0:\edge) ++(0:\edge) ++(120:\edge) 
 ++(120:\edge) circle [radius=\rad];
\draw[fill] ++(0:\edge) ++(0:\edge) ++(120:\edge) 
 ++(120:\edge) ++(240:\edge) circle [radius=\rad];
\draw[fill] ++(0:\edge)
  ++(90:0.577350269*\edge) circle [radius=\rad];
% That is: 1/sqrt(3).
\end{tikzpicture}
\end{center}
\caption{The fixed point schemes in $Y_0$ and in the blow-up $Y_1$.
Here $E_0\cong \P^2$, which we view as a toric variety; so the three
edges of the triangle denote the coordinate lines in $\P^2$.
We consider three coordinate charts on $Y_1$, each containing one
vertex of the triangle.}
\end{figure}
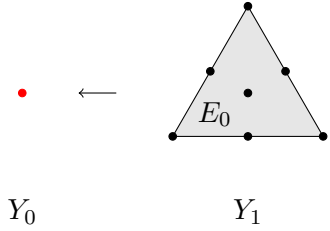

One further $G$-equivariant
blow-up at each of these 7 points suffices to resolve $Y_1/G$,
but the equations for these blow-ups are a bit messy.
Instead, we will use Theorem \ref{mup}
to show that the 7 singular points
of $Y_1/G$ are all mixed-characteristic analogs of the singularity
$A^3/\mu_2$, the simplest terminal singularity whose canonical class
is not Cartier.
More precisely, each of these 7 singular points is of the form
$\frac{1}{2}(1,1,1)$, meaning that it can be written as $Q/\mu_2$
for some regular scheme $Q$ of dimension 3 (at an isolated fixed point
of $\mu_2$). As a result, $Y_1/G$ is terminal.
With one last calculation, we will deduce that $Y_0/G$ is terminal.

We first consider the singularities of $Y_1/G$ in the chart
$U_0=\{y_0=1\}$, as above. Here $U_0$ has coordinates
$(x_0,y_1,y_2)$ with $0=(x_0y_2)^2+2(x_0y_2)-2$, and $E_0=\{x_0=0\}$.
We saw that the fixed point scheme $U_0^G$ is the Cartier
divisor $E_0=\{x_0=0\}$ except at the 4 points $(x_0,y_1,y_2)$
equal to $(0,0,0),(0,0,1),(0,1,0)$, or $(0,1,1)$.
At each of these points $P$, the $G$-action has the form
required for Theorem \ref{mup} with $e=s:=x_0$,
namely that $I(x_0)=x_0^2(1+\m)$
and $I(z_j)=x_0(z_j+\m^2)$ for $j=1,2$, for some coordinates
$x_0,z_1,z_2$ at $P$, with $\m$ the maximal ideal at $P$.
Also, since $x_0^2y_2^2=2(\text{unit})$, 2 is in the ideal $(x_0^2)$,
hence in $x_0\m$, which is another assumption in Theorem \ref{mup}.
So the theorem
gives that these 4 singular points of $Y_1/G$ are of the form
$\frac{1}{2}(1,1,1)$. 

The calculations are identical in the chart $\{y_1=1\}$ in $Y_1$.
They are slightly different in the chart $\{y_2=1\}$,
but the conclusion is the same:
the singularities of $Y_1/G$
in this chart are again of the form $\frac{1}{2}(1,1,1)$.
Namely, this chart has coordinates
$(y_0,y_1,e_2)$ with $0=e_2^2-2e_2+2$, and $E_0=\{e_2=0\}$.
The fixed point scheme $Y_1^G$ is the Cartier
divisor $E_0$ except at the 4 points $(y_0,y_1,e_2)$
equal to $(0,0,0),(1,0,0),(0,1,0)$, or $(1,1,0)$.
At each of these points $P$, the $G$-action has the form
required for Theorem \ref{mup} with $e=s:=e_2$,
namely that $I(e_2)=e_2^2(1+\m)$
and $I(z_j)=e_2(z_j+\m^2)$ for $j=0,1$,
for some coordinates $(z_0,z_1,e_2)$
at $P$, where $\m$ is the maximal ideal at $P$.
Also, the equation for $e_2$ implies that
2 is in the ideal $(e_2^2)$, hence in $e_2\m$, which is another
assumption in Theorem \ref{mup}. So the theorem
gives that these 4 singular points of $Y_1/G$ are again of the form
$\frac{1}{2}(1,1,1)$.

Thus all 7 singular points of $Y_1/G$ are of the form $\frac{1}{2}(1,1,1)$.
By the Reid-Tai criterion (Theorem \ref{reidtai}), they are terminal.
(To check that by hand:
each singular point has a resolution $Z\to U_1/G$ whose exceptional divisor
is $E_j\cong \P^2_{\F_2}$ with
normal bundle $O(-2)$. As a result, the singularities
of $Y_1/G$ are terminal, with
$K_Z=\pi^*(K_{Y_1/G})+\frac{1}{2}E_j$ near each $E_j$.)

Recall that $Y_0=Y$ is the regular scheme of dimension 3
that we started with. (Thus $Y_1$
is the blow-up of $Y_0$ at the $G$-fixed point.)
We now go on to show that $X=Y_0/G$ is terminal.
Write $F_0$ for the image in $Y_1/G$ of
the exceptional divisor $E_0$. Note that although $G$
fixes $E_0$ in $Y_1$, the morphism $E_0\arrow F_0$
is a finite purely inseparable morphism, not necessarily an isomorphism.
(Indeed, $G=\Z/2$ is not linearly reductive over $\Z_2$.
So if $G$ acts on an affine scheme $T$ preserving a closed subscheme $S$,
the morphism $S/G\arrow T/G$ need not be a closed immersion.
Equivalently, the $G$-equivariant surjection $O(T)\arrow O(S)$ need not yield
a surjection $O(T)^G\arrow O(S)^G$.)

Write $K_{Y_0}$ for the canonical sheaf $\omega_{Y_0/\Z_2}$.
Since $Y_0$ is regular, $K_{Y_0}$ is a line bundle, described as follows
\cite[Definition 1.6]{Kollarsings}. First, let $R=\Z_2[i]/(i^2+1)$.
Then we have an embedding $D=\Spec R\subset A^1_{\Z_2}$; write
$I$ for the ideal $(i^2+1)\subset \Z_2[i]$ defining this subscheme.
Then the adjunction formula $K_D=(K_{A^1}+D)|_D$ is made into a definition:
$$\omega_{R/\Z_2}=\Omega^1_{\Z_2[i]/\Z_2}
\otimes_{\Z_2[i]}R\otimes_R (I/I^2)^*.$$
In these terms, one trivializing section of $\omega_{R/\Z_2}$
is $\alpha:=\frac{di}{i}\cdot f$, where $f\colon I/I^2\to R=\Z_2[i]/I$
is the map sending $i^2+1$ to 1. (Formally, one could think of this
section of $\omega_{R/\Z_2}$
as $\frac{1}{i}\frac{di}{d(i^2+1)}$.) Next, since $\pi\colon Y_0\to
\Spec R$ is smooth
of relative dimension 2, we have $K_{Y_0}=\omega_{Y_0/\Z_2}=
\Omega^2_{Y_0/R}\otimes \pi^*\omega_{R/\Z_2}$. So one trivializing
section of $K_{Y_0}$ is $\beta:=\frac{dx}{x}\wedge \frac{dy}{y}\wedge
\frac{di}{i}\cdot f$. I claim that
this section is fixed by $G$. Indeed, if we extend the action
of $G$ on $R$ to $A^1_{\Z_2}$ by $\sigma(i)=-i$, then $\sigma(f)=f$
and $\sigma(di)=-di$, so $\sigma(\frac{di}{i}f)=\frac{di}{i}f$. Also,
$\sigma(dx/x)=-dx/x$ and $\sigma(dy/y)=-dy/y$, from which we see that
$\sigma(\beta)=\beta$ as claimed. It follows that
the divisor
class $K_{Y_0/G}$ is linearly equivalent to zero,
in particular Cartier.
Here $K_{Y_0/G}$ is the canonical sheaf
in the sense of \cite[Definition 1.6]{Kollarsings}; $Y_0/G$ is not
Gorenstein, since (as we have shown) it is not Cohen-Macaulay.

Since $K_X$ is Cartier, we can write
$$K_{Y_1/G}=\pi^*K_X+a_0F_0$$
for an integer $a_0$.
Since $Y_1/G$ is terminal, $X$ is terminal if and only
if the discrepancy $a_0$ is positive.
Here and below, we write $\pi$ for all the relevant contractions,
which in the formula above means $\pi\colon Y_1/G\arrow Y_0/G=X$.

The analogous formula for $Y_1$ is easy.
Since $Y_1$
is the blow-up of the regular 3-dimensional scheme $Y_0$ at a closed point,
$$K_{Y_1}=\pi^*K_{Y_0}+2E_0.$$

Write $f$ for the quotient map $Y_0\arrow Y_0/G$ or $Y_1\arrow Y_1/G$.
The ramification of $f$ along $E_0$ can be computed as follows.

\begin{corollary}
\label{fierce}
Let $U$ and $G$ be as in Theorem \ref{mup}.
So $U$ is a regular scheme with an action of the group
$G=\Z/p=\langle \sigma:\sigma^p=1\rangle$, for a prime number $p$,
and assume that $U^G$ is generically a regular divisor
$E_1=\{x_1=0\}$ and that $I(x_1)=x_1^2(\text{unit})$.
Assume that $U$ is of finite type over a regular base scheme $S$
and that $G$ acts on $U$ over $S$. Write $K_U$ and $K_{U/G}$ for
the canonical classes over $S$.
Then $U\to U/G$ is fiercely ramified along $E_1$.
In particular, the image $F_1$ of $E_1$ in $U/G$ is $\Q$-Cartier,
and $f\colon U\to U/G$ satisfies $f^*F_1=E_1$
and $K_U=f^*K_{U/G}+(p-1)E_1$.
\end{corollary}

\begin{proof}
The norm $N(x_1)$ is a function on $U/G$ that defines a positive
multiple of the divisor $F_1$,
and so $F_1$ is $\Q$-Cartier. Since the fixed point scheme
$U^G$ is generically the Cartier divisor $E_1$, the ramification
divisor of $f$ is $(p-1)E_1$ by Lemma \ref{different}. That is,
$K_U=f^*K_{U/G}+(p-1)E_1$. Also, the fixed point scheme
is generically $E_1=\{ x_1=0\}$ with coefficient 1, whereas
$I(x_1)$ vanishes to order 2 along $E_1$; so section
\ref{ramsection} gives that the ramification of $f$ along $E_1$
is fierce. In particular, the ramification index $e$ is 1,
meaning that $f^*F_1=E_1$.
\end{proof}

In particular, returning to our example with $p=2$,
we have seen that the divisor $E_0$ has multiplicity 1
in the fixed point scheme $(Y_1)^G$. Also,
Corollary \ref{fierce} gives that $f\colon Y_1\to Y_1/G$
is fiercely ramified along $E_0$. So we have
$$K_{Y_1}=f^*K_{Y_1/G}+E_0$$
and $f^*F_0=E_0$.
(The same is true for the example in characteristic 2 that
we are imitating \cite{Totarokodaira}.)

Since $f\colon Y_0\arrow Y_0/G$ is \'etale in codimension 1,
we have $K_{Y_0}=f^*K_{Y_0/G}$. It follows that
\begin{align*}
f^*K_{Y_1/G}&=K_{Y_1}-E_0\\
&= (\pi^*K_{Y_0}+2E_0)-E_0\\
&= \pi^*f^*K_{Y_0/G}+E_0\\
&= f^*(\pi^*K_{Y_0/G}+F_0).
\end{align*}
Therefore, 
$$K_{Y_1/G}=\pi^*K_{Y_0/G}+F_0.$$
Because the coefficient of the exceptional divisor
$F_0$ is positive, and $Y_1/G$ is terminal as shown above,
$X=Y_0/G$ is terminal.
\end{proof}

\section{Characteristic 3}
\label{char3}

\begin{theorem}
\label{F3thm}
Let the group $G=\Z/3$ with generator $\tau$
act on $\P^2$ over $\F_3$ by
$$\tau([u_0,u_1,u_2])=[u_1,u_2,u_0]$$
and on $\P^1$ by
$$\tau([y_0,y_1])=[y_0,y_0+y_1].$$
Then $(\P^2\times \P^1)/G$
is terminal, not Cohen-Macaulay,
and of dimension 3 over $\F_3$.
\end{theorem}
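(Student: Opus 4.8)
The plan is to follow closely the strategy used for Theorem~\ref{Z2thm}, adapting it to the $p=3$ setting. The variety is $V=\P^2\times\P^1$ with a $G=\Z/3$ action; I would first locate the fixed-point scheme, then establish non-Cohen-Macaulayness via group cohomology, and finally prove terminality by a single blow-up that reduces matters to toric singularities recognizable by Theorem~\ref{mup}.

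First I would compute the fixed locus of $G$ on $V=\P^2\times\P^1$. The action on $\P^2$ is the cyclic permutation $\tau[u_0,u_1,u_2]=[u_1,u_2,u_0]$; in characteristic $3$ this is a unipotent-type element (its fixed scheme is degenerate, unlike the three distinct eigenpoints one gets in characteristic $\neq 3$), so I expect a single fixed point $[1,1,1]$ counted with multiplicity, the characteristic-$3$ analogue of Artin's $E_6$ situation. On $\P^1$ the action $\tau[y_0,y_1]=[y_0,y_0+y_1]$ is also unipotent with a unique fixed point $[0,1]$. Thus $G$ should fix a single closed point $P=([1,1,1],[0,1])$ of $V$, with residue field $\F_3$, and act freely elsewhere in codimension $\geq 2$; since $V$ is regular away from nothing and $G$ acts freely off $P$, the quotient $V/G$ is regular outside the image of $P$. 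For non-Cohen-Macaulayness I would run exactly the argument of Theorem~\ref{Z2thm}: it suffices to exhibit a nonzero class in $H^1(G,O(V-P))$, or more simply to find a regular (or rational) function on an affine $G$-chart whose cohomology class restricts nontrivially to $H^1(G,O(P))\cong\F_3$, and then invoke the spectral sequence $H^p(G,H^q(V-P,O))\Rightarrow H^{p+q}((V-P)/G,O)$ together with the depth argument (via the local cohomology exact sequence) to conclude $V/G$ is not Cohen-Macaulay at $P$.

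The terminality proof is where the real work lies, and it is the step I expect to be the main obstacle. The plan is to blow up $V$ at the fixed point $P$ to get $\pi\colon V_1\to V$ with exceptional divisor $E_0\cong\P^2_{\F_3}$, then examine $V_1^G$ in affine charts. Following the pattern of the $p=2$ case, I expect $E_0$ to appear with multiplicity $1$ in the fixed-point scheme $V_1^G$ except at finitely many bad points, and at each bad point the action should satisfy the hypotheses of Theorem~\ref{mup} with a suitable $e=s$ (a local coordinate cutting out $E_0$), giving $I(e)=e^2(\text{unit})$ and $I(x_i)\in(e)$; since $p=0$ on these charts the condition $p\in e^{p-1}\m$ is automatic. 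Theorem~\ref{mup} will then identify each singularity of $V_1/G$ as a $\mu_3$-quotient singularity of some type $\frac{1}{3}(b_1,b_2,b_3)$, which I would check to be terminal by the Reid--Tai criterion (Theorem~\ref{reidtai}). The genuine difficulty, relative to $p=2$, is twofold: the permutation action in characteristic $3$ is more degenerate, so computing the fixed-point scheme in local coordinates after blow-up requires care (the relevant eigenvalue/weight data must be extracted by linearizing $\varphi(y)=I(y)/e$), and verifying the Reid--Tai inequality $\sum_j (ib_j \bmod 3)>3$ for $i=1,2$ depends on pinning down the exact weights $b_j$, which I would obtain from the eigenvalues of $\varphi$ as guaranteed by Theorem~\ref{mup}.

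Finally, I would descend terminality from $V_1/G$ to $V/G$ by the discrepancy computation of Theorem~\ref{Z2thm}. Using Corollary~\ref{fierce}, the map $f\colon V_1\to V_1/G$ is fiercely ramified along $E_0$ with $K_{V_1}=f^*K_{V_1/G}+(p-1)E_0=f^*K_{V_1/G}+2E_0$ and $f^*F_0=E_0$. Combining this with the blow-up formula $K_{V_1}=\pi^*K_V+2E_0$ (for the blow-up of a regular $3$-fold at a point) and with $K_V=f^*K_{V/G}$ (since $f$ is étale in codimension $1$ on $V$), I would solve for the discrepancy of $F_0$ over $V/G$ and check it is positive, exactly as in the chain of equalities ending Theorem~\ref{Z2thm}. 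Provided $K_{V/G}$ is $\Q$-Cartier — which I would verify by exhibiting a $G$-invariant rational section of $K_V$, using that the canonical classes of $\P^2$ and $\P^1$ and the anticanonical nature of the action make the top form semi-invariant — a positive discrepancy for the single exceptional divisor $F_0$, together with terminality of $V_1/G$, yields terminality of $V/G$.
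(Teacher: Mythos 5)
Your overall strategy (blow up, recognize $\mu_3$-quotient singularities via Theorem \ref{mup}, apply Reid--Tai, descend discrepancies) is the right one, and your treatment of the fixed locus and of non-Cohen-Macaulayness is fine (in equal characteristic $p$ one can even just quote Fogarty's result directly, since $P$ is an isolated fixed point of $\Z/p$ on a smooth 3-fold). But there is a concrete gap in the terminality step: a single blow-up does not suffice in characteristic $3$. After blowing up the fixed point, the fixed-point scheme $Y_1^G$ is not generically the exceptional divisor $E_0\cong\P^2$ with finitely many bad points, as in the $p=2$ case; it is a \emph{curve} isomorphic to $\P^1$ inside $E_0$, hence of codimension $2$. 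Consequently Theorem \ref{mup} is not applicable on $Y_1$ (it requires the fixed scheme to be a Cartier divisor $\{e=0\}$ away from a closed subset), and Corollary \ref{fierce} does not apply along $E_0$ either: since $G$ acts nontrivially on $E_0$, the map $Y_1\to Y_1/G$ is \emph{unramified} along $E_0$, so your expected formula $K_{Y_1}=f^*K_{Y_1/G}+2E_0$ is false (the correct statement is $K_{Y_1}=f^*K_{Y_1/G}$ near the generic point of $E_0$). One must blow up a second time, along the reduced fixed curve, producing $Y_2\to Y_1$ with exceptional divisor $E_1$; only then is $Y_2^G$ the Cartier divisor $E_1$ outside six bad points, where Theorem \ref{mup} applies.

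A second, independent problem with your plan is the expectation that every resulting toric singularity will be terminal. After the second blow-up, five of the six singular points of $Y_2/G$ are of type $\tfrac{1}{3}(1,1,2)$ and hence terminal, but one is of type $\tfrac{1}{3}(1,1,1)$, which by Reid--Tai is canonical and \emph{not} terminal. So the argument ``the partial resolution is terminal and all exceptional discrepancies are positive'' cannot be run as stated. The fix is that the non-terminal point lies on the exceptional divisor $F_1$, which occurs with positive coefficient in $K_{Y_2/G}=\pi^*K_{Y_0/G}+2F_0+F_1$; hence any divisor over $Y_0/G$ whose center is that point has discrepancy $\geq 0$ over $Y_2/G$ plus a strictly positive contribution from $F_1$, and terminality of $Y_0/G$ still follows. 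Without these two corrections the proposed proof does not go through.
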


\begin{proof}
We work throughout over $k=\F_3$. Write $G=\Z/3=\langle \sigma:
\sigma^3=1\rangle$, with $\tau:=\sigma^{-1}$. Let $Y_0=\P^2\times \P^1$
and $X=Y_0/G$. The only fixed point of $G$ on $Y_0$
is $P=([1,1,1],[0,1])$. So $X$ is normal of dimension 3,
and $X$ is smooth over $k$ outside the image of $P$, which we also call $P$.
Also, $3K_X$ is Cartier. By Fogarty, since $P$ is an isolated
fixed point of $G=\Z/p$ on a smooth 3-fold in characteristic $p$,
$X$ is not Cohen-Macaulay at $P$ \cite[Proposition 4]{Fogartydepth}.

It remains to show that $X$ is terminal. One can resolve the singularities
of $X$ by performing $G$-equivariant blow-ups of $Y_0$. However,
as in section \ref{Z2}, we will shorten the proof by recognizing that,
after two $G$-equivariant blow-ups $Y_2\to Y_1\to Y_0$, the singularities
of $Y_2/G$ become toric, namely quotients of a regular scheme
by $\mu_3$. That makes it easy
to check that $Y_0/G$ is terminal, without having to continue making
$G$-equivariant blow-ups.

Before this approach, I found a $G$-equivariant blow-up $Y_{18}\to
\cdots \to Y_0$ with $Y_{18}/G$ regular, but the construction
involved 18 blow-ups along points or curves. The approach here,
looking for toric singularities instead of regularity,
saves a lot of work.

To put the fixed point at the origin, we change coordinates on $Y_0$
by: $x_0=(u_0+u_1+u_2)/u_1$, $x_1=(-u_1+u_2)/u_1$,
and $x_2=y_0/y_1$. Then
$G$ acts on the open subset $U$ of $Y_0$ given by
$$U=\{ (x_0,x_1,x_2)\in A^3: 1+x_2\neq 0,\; 1-x_2\neq 0,\;
1+x_1\neq 0,\; 1+x_0-x_1\neq 0\}.$$
The $G$-action on $U$ is given by
$$\tau(x_0,x_1,x_2)=\bigg( \frac{x_0}{1+x_1},\;
\frac{x_1+x_0}{1+x_1},\;
\frac{x_2}{1+x_2}\bigg).$$
As we blow up, we will not need to keep track of the precise
affine open set on which $G$ acts, since we are only concerned with
the action near the fixed point set.

Let $Y_1$ be the blow-up of $Y_0$ at the $G$-fixed point,
which is the origin in these coordinates. Then the open subset of $Y_1$
over $U\subset Y_0$ is
$$\{ ((x_0,x_1,x_2),[y_0,y_1,y_2])\in U\times \P^2:
x_0y_1=x_1y_0,\; x_0y_2=x_2y_0,x_1y_2=x_2y_1\}.$$
Clearly the fixed point set $Y_1^G$ is contained in the exceptional
divisor $E_0\cong \P^2$. It turns out
to be a curve isomorphic to $\P^1$. We need three coordinate charts
to cover $E_0$. First consider
the open subset
$\{y_0=1\}$ in $Y_1$. Here $(x_0,x_1,x_2)=(x_0,x_0y_1,x_0y_2)$,
and $G$ acts by
$$\tau(x_0,y_1,y_2)=\bigg( \frac{x_0}{1+x_0y_1},\;
1+y_1,\;
\frac{y_2(1+x_0y_1)}{1+x_0y_2}\bigg) .$$
By the action on the $y_1$ coordinate,
there are no fixed points in this open set.

Next, work in the open set
$\{y_1=1\}\subset Y_1$. Here $(x_0,x_1,x_2)=(y_0x_1,x_1,y_2x_1)$,
$E_0=\{x_1=0\}$, and $G$ acts by
$$\tau(y_0,x_1,y_2)=\bigg(\frac{y_0}{1+y_0},\;
\frac{x_1(1+y_0)}{1+x_1},\;
\frac{y_2(1+x_1)}{(1+y_0)(1+x_1y_2)}\bigg).$$
The fixed point scheme $Y_1^G$ is defined by:
$I(y_0)=-y_0^2/(1+y_0)$, $I(x_1)=x_1(y_0+O(x_1))$,
and $I(y_2)=y_2(-y_0+O(x_1))/(1+y_0+O(x_1))$. Since
we know that the fixed point set $Y_1^G$ is contained in $E_0
=\{x_1=0\}$, we read off that
the fixed point set is the line $\{0=y_0=x_1\}$ in $E_0$.

Finally, consider the open set $\{y_2=1\}$. Then $(x_0,x_1,x_2)=(x_2y_0,
x_2y_1,x_2)$, and $G$ acts by
$$\tau(y_0,y_1,x_2)=\bigg( \frac{y_0(1+x_2)}{1+y_1x_2},\;
\frac{(y_0+y_1)(1+x_2)}{1+y_1x_2},\;
\frac{x_2}{1+x_2}\bigg).$$
In these coordinates, the exceptional divisor $E_0\cong \P^2$
in $Y_1$ is $\{x_2=0\}$.
The fixed point scheme of $G$ is defined by:
$I(y_0)=y_0x_2(1-y_1+O(x_2))$, $I(y_1)=y_0+O(x_2)$,
and $I(x_2)=x_2^2(-1+O(x_2))$. We know that the fixed point set
is contained in $E_0$, and these equations imply that the fixed
point set is the line $\{0=y_0=x_2\}$ in $E_0$, the same line
as in the previous chart. Thus we have shown that the fixed point set
in all of $Y_1$ is a curve isomorphic to $\P^1$.

Let $Y_2$ be the blow-up of $Y_1$ along the (reduced) $G$-fixed curve,
with exceptional divisor $E_1\subset Y_2$.
It is clear that $Y_2^G$ is contained
in $E_1$ as a set. Write $E_0$ for the strict transform of $E_0\subset Y_1$
in $Y_2$. It turns out that
the fixed point scheme $Y_2^G$ is equal to the Cartier divisor $E_1$
except at six points in $E_1$,
three over the point $[y_0,y_1,y_2]=[0,1,0]$ in $E_0\subset Y_1$
and one each over $[y_0,y_1,y_2]$ equal to $[0,1,1]$, $[0,1,-1]$,
or $[0,0,1]$. Since $E_1$ is a $\P^1$-bundle
over $\P^1$, we will need to look
at four affine charts to see all of it.
(See Figure \ref{F3figure}. Each affine chart we consider
in $Y_2$ contains exactly one of the four vertices of the square.)

First work
over the chart $\{y_1=1\}$ in $Y_1$.
We wrote out the $G$-action
on this chart above, with coordinates $(y_0,x_1,y_2)$. Here
$E_0=\{x_0=0\}$.
We defined $Y_2$ as the blow-up of $Y_1$ along the $G$-fixed
curve $\{0=y_0=x_1\}$ in $E_0$.
So, over this open subset of $Y_1$, $Y_2$ is an open
subset of
$$\{((y_0,x_1,y_2),[w_0,w_1])\in A^3\times \P^1: y_0w_1=x_1w_0\}.$$
\def\edge{1cm}
\def\rad{0.05}
% One could omit the \def command, by changing it in the figure, below.
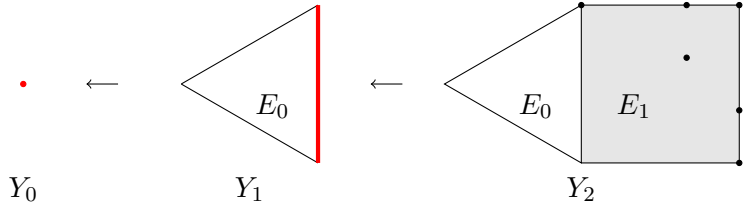
\begin{figure}
\begin{center}
\begin{tikzpicture}[scale=0.7]
\draw (0,0) 
  ++(-90:2*\edge) node{$Y_0$};
\draw[fill,red] (0,0) circle [radius=\rad];
\draw[->] ++(0:1.8*\edge)
  -- ++(180:0.6*\edge);
\begin{scope}[xshift=3*\edge]
\draw (0,0) ++(0:1.2990381057\edge) ++(-90:2*\edge)
  node{$Y_1$};
% That's sqrt(3)*(3/4).
\draw (0,0) ++(0:1.7320508*\edge) node[anchor=north]{$E_0$};
% That's sqrt(3).
\draw (0,0) -- ++(-30:3*\edge);
\draw (0,0) -- ++(30:3*\edge);
\draw[ultra thick,red] (0,0) ++(-30:3*\edge)
  -- ++(90:3*\edge);
\draw[->] ++(0:4.2*\edge)
  -- ++(180:0.6*\edge);
\end{scope}
\begin{scope}[xshift=8*\edge]
\draw (0,0) ++(0:1.5*1.7320508*\edge) ++(-90:2*\edge)
  node{$Y_2$};
% That's sqrt(3).
\fill[gray!20] (0,0) ++(-30:3*\edge) -- ++(0:3*\edge)
  -- ++(90:3*\edge) -- ++(180:3*\edge)
  -- cycle;
\draw (0,0) -- ++(-30:3*\edge) -- ++(90:3*\edge)
  -- cycle;
\draw (0,0)  ++(-30:3*\edge) -- ++(0:3*\edge)
  -- ++(90:3*\edge) -- ++(180:3*\edge);
\draw (0,0) ++(0:1.7320508*\edge) node[anchor=north]{$E_0$};
% That's sqrt(3).
\draw (0,0) ++(0:1.5*1.7320508*\edge) ++(0:\edge)
  node[anchor=north]{$E_1$};
% That's sqrt(3).
\draw[fill] (0,0) ++(-30:3*\edge) ++(0:3*\edge)
  circle [radius=\rad];
\draw[fill] (0,0) ++(30:3*\edge)
  circle [radius=\rad];
\draw[fill] (0,0) ++(-30:3*\edge) ++(0:3*\edge) ++(90:\edge)
  circle [radius=\rad];
\draw[fill] (0,0) ++(-30:3*\edge) ++(0:3*\edge) ++(90:3*\edge)
  circle [radius=\rad];
\draw[fill] (0,0) ++(-30:3*\edge) ++(0:2*\edge) ++(90:2*\edge)
  circle [radius=\rad];
\draw[fill] (0,0) ++(-30:3*\edge) ++(0:2*\edge) ++(90:3*\edge)
  circle [radius=\rad];
\end{scope}
\end{tikzpicture}
\end{center}
\caption{For $j=0,1,2$, $G$ acts freely on $Y_j$ outside the shaded
or marked loci; $Y_j/G$ is regular outside the marked loci;
and $Y_j/G$ has toric singularities outside the red loci.}
\label{F3figure}
\end{figure}

First look at the chart $\{w_0=1\}$ in $Y_2$ over
$\{y_1=1\}$ in $Y_1$. (This chart contains the upper left vertex
of $E_1$, in Figure \ref{F3figure}.) Then
$(y_0,x_1,y_2)=(y_0,y_0w_1,y_2)$,
and $G=\Z/3$ acts by
$$\tau(y_0,w_1,y_2)=\bigg( \frac{y_0}{1+y_0},\;
\frac{w_1(1+y_0)^2}{1+y_0w_1},\;
\frac{y_2(1+y_0w_1)}{(1+y_0w_1y_2)(1+y_0)}\bigg) .$$
Here $E_0=\{w_1=0\}$ and $E_1=\{y_0=0\}$.
The fixed point scheme is defined by:
$I(y_0)=y_0^2(-1+O(y_0))$, $I(w_1)=y_0w_1(-1-w_1+O(y_0))$,
and $I(y_2)=y_0y_2(-1+w_1-w_1y_2+O(y_0))$. So
the fixed point scheme (near $E_1$) is $E_1$ with multiplicity 1
except at points with $0=y_0$, $0=w_1(w_1+1)$,
and $0=y_2(1-w_1+w_1y_2)$.
Thus the bad points are
$(y_0,w_1,y_2)=(0,0,0)$ in $E_0\cap E_1$, $(0,-1,0)$, and $(0,-1,-1)$.
We have found three bad points in $E_1$, with the first one
in $E_0\cap E_1$.

The other chart over $\{y_1=1\}$ in $Y_1$
is $\{w_1=1\}$ in $Y_2$ (the chart containing
the upper right vertex of $E_1$ in Figure \ref{F3figure}). Then
$(y_0,x_1,y_2)=(w_0x_1,x_1,y_2)$,
and $G$ acts by
$$\tau(w_0,x_1,y_2)=\bigg( \frac{w_0(1+x_1)}{(1+w_0x_1)^2},\;
\frac{x_1(1+w_0x_1)}{1+x_1},\;
\frac{y_2(1+x_1)}{(1+x_1y_2)(1+w_0x_1)}\bigg) .$$
Here $E_0$ does not appear, and $E_1=\{x_1=0\}$.
The fixed point scheme is defined (near $E_1$) by:
$I(w_0)=w_0x_1(1+w_0+O(x_1))$, $I(x_1)=x_1^2(-1+w_0+O(x_1))$,
and $I(y_2)=x_1y_2(1-w_0-y_2+O(x_1))$.
So $Y_2^G$ is $E_1$ with multiplicity 1 except at points
where $0=x_1$, $0=w_0(1+w_0)$,
and $0=y_2(1-w_0-y_2)$.
Thus the bad points in this chart are $(w_0,x_1,y_2)$
equal to $(0,0,0)$, $(0,0,1)$, $(-1,0,0)$, and $(-1,0,-1)$.
The points with $w_0\neq 0$ appeared in the previous chart,
$\{w_0=1\}$. So we have two new bad points, $(w_0,x_1,y_2)$
equal to $(0,0,0)$ or $(0,0,1)$, for a total of five so far.

To see all of $E_1$ in $Y_2$, we also need to work over
$\{y_2=1\}\subset Y_1$.
The coordinates
are $(y_0,y_1,x_2)$, and $E_0=\{x_2=0\}$.
The corresponding open subset of $Y_2$
is an open subset of
$$\{ ((y_0,y_1,x_2),[z_0,z_2])\in A^3\times \P^1: y_0z_2=x_2z_0\}.$$
First consider $\{z_0=1\}$ in $Y_2$,
the chart containing the lower left
vertex of $E_1$ in Figure \ref{F3figure}.
Then $(y_0,y_1,x_2)=(y_0,y_1,y_0z_2)$,
and $G$ acts by
$$\tau(y_0,y_1,z_2)=\bigg( \frac{y_0(1+y_0z_2)}{1+y_0y_1z_2},\;
\frac{(y_0+y_1)(1+y_0z_2)}{1+y_0y_1z_2},\;
\frac{z_2(1+y_0y_1z_2)}{(1+y_0z_2)^2}\bigg).$$
Here $E_0=\{z_2=0\}$ and $E_1=\{y_0=0\}$.
The fixed point scheme $Y_2^G$ is defined (near $E_1$) by:
$I(y_0)=y_0^2z_2(1-y_1+O(y_0))$, $I(y_1)=y_0(1+y_1z_2-y_1^2z_2+O(y_0))$,
and $I(z_2)=y_0z_2^2(1+y_1+O(y_0))$. This is equal to $E_1$
with multiplicity 1 except at the point
$(y_0,y_1,z_2)=(0,-1,-1)$. This appeared in the chart $\{w_0=1\}$
over $\{y_1=1\}$ (as the point $(y_0,w_1,y_2)=(0,-1,-1)$).

The other chart is $\{z_2=1\}$ in $Y_2$, which contains
the lower right vertex of $E_1$ in Figure \ref{F3figure}.
Then $(y_0,y_1,x_2)=(z_0x_2,y_1,x_2)$,
and $G$ acts by
$$\tau(z_0,y_1,x_2)=\bigg( \frac{z_0(1+x_2)^2}{1+y_1x_2},\;
\frac{(y_1+z_0x_2)(1+x_2)}{1+y_1x_2},\;
\frac{x_2}{1+x_2}\bigg).$$
In this open set, $E_0$ does not appear, and $E_1=\{x_2=0\}$.
The fixed point scheme $Y_2^G$
is defined by:
$I(z_0)=z_0x_2(-1-y_1+O(x_2))$, $I(y_1)=x_2(z_0+y_1-y_1^2+O(x_2))$,
and $I(x_2)=x_2^2(-1+O(x_2))$.
We know that $Y_2^G$ is contained in $E_1=\{x_2=0\}$ as a set.
We read off that the fixed point scheme $Y_2^G$ is generically
the Cartier divisor $E_1$. The bad locus (where that fails)
is given, in $E_1$, by: $0=x_2$, $0=z_0(1+y_1)$, and $0=z_0+y_1-y_1^2$.
So we see three bad points in $E_1$: 
$(z_0,y_1,x_2)=(0,1,0)$, $(-1,-1,0)$, or $(0,0,0)$. The first two
points appeared in the chart $\{w_1=1\}$ in $Y_2$ over
$\{y_1=1\}$ in $Y_1$ (as $(w_0,x_1,y_2)=(0,0,1)$ or $(-1,0,-1)$,
respectively). On the other hand, the origin in this chart is new;
so we have a total of six bad points.

Thus the fixed point scheme $Y_2^G$
is $E_1$ with multiplicity 1
except at six points on $E_1$.
We will use Theorem \ref{mup} to recognize
the resulting six singularities of $Y_2/G$ as toric;
so we have no further need
for $G$-equivariant blow-ups.

First consider three of the six bad points
of $E_1\subset Y_2$, the ones with $[y_0,y_1,y_2]$
equal to $[0,0,1]$, $[0,1,1]$, or $[0,1,-1]$. In these cases,
our calculation of the action of $G$ shows (by Theorem \ref{mup})
that the singularity of $Y_2/G$ at these points is
of the form $\frac{1}{3}(1,1,2)$. (For example,
for the point $[y_0,y_1,y_2]=[0,0,1]$, work in the chart
$\{z_2=1\}$ in $Y_2$ over $\{y_2=1\}$ in $Y_1$. Here we have coordinates
$(z_0,y_1,x_2)$, $(Y_2)^G_{\red}$ is
$\{ x_2=0\}$, and the point we are considering is the origin.
As above, we have
$I(z_0)=z_0x_2(-1-y_1+O(x_2))$, $I(y_1)=x_2(z_0+y_1-y_1^2+O(x_2))$,
and $I(x_2)=x_2^2(-1+O(x_2))$. So Theorem \ref{mup}
applies, with $e=s:=x_2$. The linear map $\varphi$ in the theorem
is given by: $\varphi(z_0)=-z_0$, $\varphi(y_1)=z_0+y_1$,
and $\varphi(x_2)=-x_2$. This map has eigenvalues $-1,1,-1$ in $\F_3$,
and so Theorem \ref{mup}
gives that the singularity of $Y_2/G$ at this point
is of the form $\frac{1}{3}(-1,1,-1)\cong \frac{1}{3}(1,1,2)$.)
By the Reid-Tai
criterion (Theorem \ref{reidtai}), this singularity is terminal.

Next, consider the chart $\{w_1=1\}$ in $Y_2$ over $\{y_1=1\}$
in $Y_1$.
As shown above, in coordinates $(w_0,x_1,y_2)$,
the points $(0,0,0)$ and $(-1,0,0)$
are bad for the action of $G$ on $Y_2$, and $(Y_2)_{\red}=\{x_1=0\}$.
Here again,
the singularity of $Y_2/G$ at these points is
of the form $\frac{1}{3}(1,1,2)$. (For example,
for the point $(w_0,x_1,y_2)=(0,0,0)$, we saw that
$I(w_0)=w_0x_1(1+w_0+O(x_1))$, $I(x_1)=x_1^2(-1+w_0+O(x_1))$,
and $I(y_2)=x_1y_2(1-w_0-y_2+O(x_1))$. So Theorem \ref{mup}
applies with $e=s:=x_1$. The linear map $\varphi$ in the theorem
is given by $\varphi(w_0)=w_0$, $\varphi(x_1)=-x_1$,
and $\varphi(y_2)=y_2$. This has eigenvalues $1,-1,1$,
and so the theorem gives that
the singularity
of $Y_2/G$ at this point is of the form $\frac{1}{3}(1,-1,1)
\cong \frac{1}{3}(1,1,2)$.)
In particular, these two points
are terminal. Thus $Y_2/G$ is terminal
at five of its six singular points.

Finally, we consider the last singular point of $Y_2/G$.
In the chart $\{w_0=1\}$ in $Y_2$ over $\{y_1=1\}$ in $Y_1$,
the point $P$ is $(y_0,w_1,y_2)=(0,0,0)$, and $(Y_2)^G_{\red}=\{y_0=0\}$.
In this case,
our calculation of the action of $G$ shows (by Theorem
\ref{mup})
that the singularity of $Y_2/G$ is
of the form $\frac{1}{3}(1,1,1)$. (As above, we have
$I(y_0)=y_0^2(-1+O(y_0))$, $I(w_1)=y_0w_1(-1-w_1+O(y_0))$,
and $I(y_2)=y_0y_2(-1+w_1-w_1y_2+O(y_0))$. So
Theorem \ref{mup} applies with $e=s:=y_0$. The linear map $\varphi$
in the theorem is given by $\varphi(y_0)=-y_0$,
$\varphi(w_1)=-w_1$, and $\varphi(y_2)=-y_2$.
So Theorem \ref{mup} gives that the singularity
of $Y_2/G$ at this point is of the form $\frac{1}{3}(-1,-1,-1)
\cong \frac{1}{3}(1,1,1)$.)
By the Reid-Tai criterion (Theorem \ref{reidtai}),
$Y_2/G$ is canonical at this point, but not
terminal.

We can now begin the proof that $Y_0/G$ is terminal.
Write $f$ for any of the quotient maps $Y_j\to Y_j/G$.
The fixed point scheme $Y_2^G$ is the Cartier divisor
$E_1$ except at six points on $E_1$. Clearly $3K_{Y_2/G}$ is Cartier.
Write $E_0\subset Y_2$ for the strict transform of $E_0\subset Y_1$.
For $j=0,1$ in $Y_2$, let $F_j$ be the
image (as an irreducible divisor) of $E_j$ in $Y_2/G$. Since $G$ acts
nontrivially on $E_0$, we have $f^*F_0=E_0$. The divisor
$E_1$ is fixed by $G$.
By Corollary \ref{fierce}, $f$ is fiercely ramified along $E_1$. So
the ramification divisor
is $p-1$ times $E_1$, meaning that
$K_{Y_2}=f^*(K_{Y_2/G})+2E_1$, and we have $f^*F_1=E_1$.

Write $\pi_{ij}$ for the birational morphism $Y_i\to Y_j$
or $Y_i/G\to Y_j/G$ (with $i>j$).
Since $\pi_{20}\colon Y_2\to Y_0$ is defined by blowing up points
and smooth curves on a smooth 3-fold, we have:
$$K_{Y_1}=\pi_{10}^*(K_{Y_0})+2E_0$$
and
\begin{align*}
K_{Y_2}&=\pi_{21}^*(K_{Y_1})+E_1\\
&=\pi_{20}^*(K_{Y_0})+2(E_0+E_1)+E_1\\
&=\pi_{20}^*(K_{Y_0})+2E_0+3E_1.
\end{align*}

Therefore,
\begin{align*}
f^*\pi_{20}^*K_{Y_0/G}={}&\pi_{20}^*f^*K_{Y_0/G}\\
={}&\pi_{20}^*K_{Y_0}\\
={}&K_{Y_2}-2E_0-3E_1\\
={}&f^*(K_{Y_2/G})+2E_1-2E_0-3E_1\\
={}&f^*(K_{Y_2/G})-2E_0-E_1\\
={}&f^*(K_{Y_2/G}-2F_0-F_1).
\end{align*}
So
$$K_{Y_2/G}=\pi_{20}^*(K_{Y_0/G})+2F_0+F_1.$$

Here every exceptional divisor of the birational morphism $Y_2/G\to Y_0/G$
has positive coefficient. Also, we have shown
that $Y_2/G$ is terminal outside one point which is canonical,
and that point lies on $F_1$. Therefore, $Y_0/G$ is terminal.
We showed earlier that it is not Cohen-Macaulay.
Theorem \ref{F3thm} is proved.
\end{proof}

\begin{remark}
\label{F3cartier}
The divisor class $\pi^*K_{Y_0/G}=K_{Y_2/G}-2F_0-F_1$
has some non-integer discrepancies, for example over the origin
in the chart $\{z_2=1\}$ in $Y_2$ over $\{y_2=1\}$
in $Y_1$. As a result, $K_{Y_0/G}$ is not Cartier
(as one can also check directly), in contrast
to our examples in residue characteristic 2
(\cite[Theorem 5.1]{Totarokodaira} and Theorem \ref{Z2thm}).
I expect that there is also a 3-fold $X$
over $\F_3$ that is terminal and non-Cohen-Macaulay
with $K_X$ Cartier. Namely, one should replace $\P^1$
in Theorem \ref{F3thm} by the ``Harbater-Katz-Gabber'' curve
$C=\{ 0=y^p-yz^{p-1}-x^{p-1}z\}$ in $\P^2_{\F_p}$ \cite{Bleher},
here with $p=3$,
which has a $\Z/p$-action by $\tau([x,y,z])=[x+z,y,z]$ that preserves
a nonzero 1-form near the fixed point $[1,0,0]$. For $p=3$,
$C$ is a supersingular elliptic curve.
\end{remark}

\section{The example over the 3-adic integers}
\label{Z3}

\begin{theorem}
\label{Z3thm}
Let $G$ be the group $G=\Z/3$ with generator $\tau$.
Let $R=\Z_3[e]/(e^3-3e^2+3)$, which is the ring of integers
in a Galois extension of $\Q_3$
with group $G=\Z/3$. Let $G$
act on the scheme $\P^2_R$ by
$$\tau([u_0,u_1,u_2],e)=([u_1,u_2,u_0],3+e-e^2).$$
Then the scheme $\P^2_R/G$
is terminal, not Cohen-Macaulay, of dimension 3,
and flat over $\Z_3$.
\end{theorem}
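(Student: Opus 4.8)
The plan is to run the argument in close parallel with the two model proofs already in the paper: the mixed-characteristic example over $\Z_2$ (Theorem \ref{Z2thm}) for the basic structure (flatness, the single fixed point, and non-Cohen-Macaulayness), and the characteristic-$3$ example (Theorem \ref{F3thm}) for the blow-up analysis that proves terminality. What makes this feasible is that the uniformizer $e$ of $R$ is a mixed-characteristic analog of the coordinate $x_2$ in Theorem \ref{F3thm}. Indeed $e^3-3e^2+3=0$ is Eisenstein, so $e$ is a uniformizer of the (totally ramified cyclic cubic) ring $R$ with residue field $\F_3$ and $3=e^3\cdot(\text{unit})$; hence modulo $e^3$ (so in particular modulo $3$) one computes $\tau(e)\equiv e-e^2$, which matches $\tau(x_2)=x_2/(1+x_2)\equiv x_2-x_2^2$ to the orders that matter. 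Thus near the fixed point the $G$-action on $\P^2_R$ looks exactly like the action of Theorem \ref{F3thm} with $x_2$ replaced by $e$, and the $+3$ correction is a harmless higher-order term.

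First I would record the elementary structure. Since $R$ is a discrete valuation ring, $\P^2_R$ is regular of dimension $3$, and $X:=\P^2_R/G$ is normal, integral, of dimension $3$. Flatness over $\Z_3$ follows as in Theorem \ref{Z2thm}: $O_X=(O_{\P^2_R})^G$ is a subsheaf of the $\Z_3$-torsion-free sheaf $O_{\P^2_R}$, hence torsion-free, hence flat over the PID $\Z_3$. The only fixed point of $G$ is the closed point $P=([1,1,1],\m_R)$ with residue field $\F_3$: on the special fiber $\P^2_{\F_3}$ the cyclic permutation has the single fixed point $[1,1,1]$ (cube roots of unity collapse in characteristic $3$) and Galois acts trivially on $\F_3$, while on the generic fiber $\tau$ acts nontrivially on the fraction field $K$ of $R$, leaving no fixed closed point. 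So $G$ acts freely off $P$, $X$ is regular away from the image of $P$, and $3K_X$ is Cartier (so $K_X$ is $\Q$-Cartier), exactly as in the char-$3$ case. For non-Cohen-Macaulayness I cannot invoke Fogarty directly, since his mixed-characteristic statement needs dimension $\geq 4$; instead I repeat the local-cohomology argument of Theorem \ref{Z2thm}. Choosing a $G$-invariant affine neighborhood $U$ of $P$, Cohen-Macaulayness at $P$ would force $H^1(U/G-P,O)=0$, while the spectral sequence $H^p(G,H^q(U-P,O))\imp H^{p+q}(U/G-P,O)$ embeds $H^1(G,O(U))=\ker(\tr)/\im(\sigma-1)$ (with $\tr=1+\sigma+\sigma^2$) into $H^1(U/G-P,O)$. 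Here the element $e-1$ gives a nonzero class: its trace is $\tr(e)-3=3-3=0$ (the sum of the conjugates of $e$ equals the coefficient $3$), so $e-1\in\ker(\tr)$, while $(e-1)|_P=-1\neq 0$ in $\F_3$, giving a nonzero image under $H^1(G,O(U))\to H^1(G,\F_3)=\F_3$. Hence $X$ is not Cohen-Macaulay.

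The substantial part is terminality, which I would establish by mimicking Theorem \ref{F3thm}. After translating the fixed point to the origin (coordinates $x_0=(u_0+u_1+u_2)/u_1$, $x_1=(-u_1+u_2)/u_1$, and $e$), I would make two $G$-equivariant blow-ups $Y_2\to Y_1\to Y_0=\P^2_R$: first at $P$, producing $E_0\cong\P^2_{\F_3}$ whose fixed locus is a curve $\cong\P^1$, then along that curve, producing $E_1$. As in the char-$3$ computation, the fixed point scheme of $Y_2$ should be the divisor $E_1$ with multiplicity $1$ away from six points, and applying Theorem \ref{mup} chart by chart should exhibit five singularities of type $\frac13(1,1,2)$ and one of type $\frac13(1,1,1)$; by the Reid-Tai criterion (Theorem \ref{reidtai}) the first five are terminal and the last is canonical but not terminal. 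The discrepancy bookkeeping is then identical to Theorem \ref{F3thm}: from $K_{Y_1}=\pi^*K_{Y_0}+2E_0$, $K_{Y_2}=\pi^*K_{Y_1}+E_1$, fierce ramification along $E_1$ (Corollary \ref{fierce}, giving $K_{Y_2}=f^*K_{Y_2/G}+2E_1$ and $f^*F_1=E_1$) together with $f^*F_0=E_0$, one obtains $K_{Y_2/G}=\pi^*K_{Y_0/G}+2F_0+F_1$ with positive discrepancies; since the one canonical point of $Y_2/G$ lies on $F_1$, this forces $X=Y_0/G$ to be terminal.

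The main obstacle is precisely the chart-by-chart verification in that last step. The calculations are genuinely more delicate than in characteristic $3$ because of the mixed-characteristic corrections: I must confirm that in each affine chart on $Y_2$ the $G$-action meets the hypotheses of Theorem \ref{mup} with the correct local parameter playing the role of $e=s$, that the linear map $\varphi(y)=I(y)/e$ has the same eigenvalues in $\F_3$ as in the char-$3$ model, and crucially that the hypothesis $3\in e^2\m$ of Theorem \ref{mup} holds at each of the six bad points. I expect the last point to follow, as in Theorem \ref{Z2thm}'s check that ``$2\in x_0\m$'', from the fact that $3=e_{\text{orig}}^3\cdot(\text{unit})$ survives as a high power of the local special-fiber parameter under the two blow-ups. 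Once these local checks go through, the reduction of $e$ to the char-$3$ coordinate $x_2$ modulo higher-order terms guarantees that the combinatorial output — the six singularity types and the resulting discrepancies $2F_0+F_1$ — matches Theorem \ref{F3thm} exactly.
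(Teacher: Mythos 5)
Your proposal follows the paper's own proof essentially step for step: the same flatness and fixed-point analysis, the same Fogarty-style local-cohomology argument using the trace-zero element $e-1$ (the paper uses $1-e$, which is equivalent), and the same terminality strategy via two $G$-equivariant blow-ups, Theorem \ref{mup} at six bad points yielding five singularities of type $\frac{1}{3}(1,1,2)$ and one of type $\frac{1}{3}(1,1,1)$, and the identical discrepancy computation $K_{Y_2/G}=\pi^*K_{Y_0/G}+2F_0+F_1$. The chart-by-chart verification you defer is exactly what the paper carries out, including the check that $3=e^3(\text{unit})$ gives $3\in e^2\m$ at each bad point, so your plan is correct and matches the paper's route.
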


This example behaves much like the example over $\F_3$,
Theorem \ref{F3thm}. 
Theorem \ref{F5thm}. {\it In particular, Figure \ref{F3figure}
accurately depicts the blow-ups we make in mixed characteristic $(0,3)$,
just as in characteristic 3. }We can view $R$
as the subring $\Z_3[\zeta_9]^+$ of the cyclotomic ring $\Z_3[\zeta_9]$
fixed by $\zeta_9\mapsto \zeta_9^{-1}$, with $e=\zeta_9+1+\zeta_9^{-1}$.
Informally, $R$ is the simplest ramified $\Z/3$-extension of $\Z_3$.
More broadly,
this action of $G$ on $\P^2_R$ was chosen as possibly the simplest
action of $\Z/3$ on a 3-fold in mixed characteristic $(0,3)$
with an isolated fixed point. The simplicity helps to ensure
that the quotient scheme is terminal.

\begin{proof}
We write $G=\Z/3=\langle \sigma:
\sigma^3=1\rangle$, with $\tau:=\sigma^{-1}$.
Let $Y_0=\P^2_R$ with $G$ acting diagonally on $\P^2$ and on $R$,
and let $X=Y_0/G$. Write $e_2$ for the generator $e$ of $R$,
to fit better with our numbering
of coordinates on $Y_0$; so we have
$$0=e_2^3-3e_2^2+3.$$
The only fixed point of $G$ on $Y_0$
is the closed point $P\cong \Spec \F_3$
given by $([u_0,u_1,u_2],e_2)=([1,1,1],0)$.
So $X$ is normal of dimension 3,
and $X$ is regular outside the image of $P$, which we also
call $P$. Clearly $3K_X$ is Cartier.

It is not automatic from Fogarty's results \cite{Fogartydepth},
but we can use his methods to show that
$X$ is not Cohen-Macaulay at $P$. As in the proof of Theorem
\ref{Z2}, using that $G$ has an isolated fixed point on the 3-fold $Y_0$,
it suffices to show that $H^1(G,O(Y_0))$ is not zero.
This cohomology group is $\ker(\tr)/\Im(1-\sigma)$
on $O(Y_0)$, where the trace is $1+\sigma+\sigma^2$. The equation
$0=e_2^3-3e_2^2+3$ (specifically, the coefficient of $e_2^2$)
implies that $e_2$ has trace 3. So $\tr(1-e_2)=0$, and hence
$1-e_2$ defines an element of $H^1(G,O(Y_0))$. Note that $1-e_2$ restricts
to $1\in O(P)=\F_3$ on the fixed point $P$. Therefore, $1-e_2$ has nonzero
image under the restriction map $H^1(G,O(Y_0))\to H^1(G,O(P))\cong \F_3$.
So $H^1(G,O(Y_0))$ is not zero, and hence
$Y_0/G$ is not Cohen-Macaulay.

It remains to show that $X$ is terminal. One can resolve the singularities
of $X$ by performing $G$-equivariant blow-ups of $Y_0$. However,
as in sections \ref{Z2} and \ref{char3},
we will shorten the proof by recognizing that,
after two $G$-equivariant blow-ups $Y_2\to Y_1\to Y_0$, the singularities
of $Y_2/G$ become toric (in Kato's mixed-characteristic sense),
namely quotients of a regular scheme
by $\mu_3$. That makes it easy
to check that $Y_0/G$ is terminal, without having to continue making
$G$-equivariant blow-ups.

To put the fixed point at the origin, we change coordinates on $Y_0$
by: $x_0=(u_0+u_1+u_2-3)/u_1$ and $x_1=(-u_1+u_2)/u_1$. Then $G$ acts
on 
$$U:=\{(x_0,x_1,e_2)\in A^3_{\Z_3}: 0=3-3e_2^2+e_2^3, 1+x_1\neq 0,
1+x_0-x_1\neq 0\}$$
by
$$\tau(x_0,x_1,e_2)=\bigg( \frac{x_0-3x_1}{1+x_1},\;
\frac{x_0-2x_1}{1+x_1},\;
3+e_2-e_2^2\bigg).$$
In what follows, we will often not need to keep track
of the precise open set on which $G$ acts, because we are
only concerned with the $G$-fixed point scheme.

The blow-up $Y_1\to Y_0$ at the $G$-fixed point is,
over the open set $U\subset Y_0$:
$$\{ ((x_0,x_1,e_2),[y_0,y_1,y_2])\in U\times_{\Z_3}
\P^2_{\Z_3}: x_0y_1=x_1y_0,\; x_0y_2=e_2y_0,\; x_1y_2=e_2y_1\}.$$

Clearly the fixed point set $Y_1^G$ is contained in the exceptional
divisor $E_0\cong \P^2_{\F_3}$. It turns out
to be a curve isomorphic to $\P^1_{\F_3}$. We need three coordinate charts
to cover $E_0$.
First look at the open set $U_0=\{y_0=1\}$ in $Y_1$. Then
$(x_0,x_1,e_2)=(x_0,x_0y_1,x_0y_2)$, and so
$$U_0=\{(x_0,y_1,y_2)\in A^3_{\Z_3}:
0=x_0^3y_2^3-3x_0^2y_2^2+3,\; 1+x_0y_1\neq 0,
\; 1+x_0-x_0y_1\neq 0\}.$$
The exceptional divisor $E_0$ is $\{x_0=0\}$.
Here $G$ acts by
$$\tau(x_0,y_1,y_2)=\bigg( \frac{x_0(1-3y_1)}{1+x_0y_1},\;
\frac{1-2y_1}{1-3y_1},\;
\frac{y_2(1+2x_0y_2-x_0^2y_2^2)(1+x_0y_1)}{1-3y_1}\bigg).$$
We know that the fixed point scheme $Y_1^G$ is contained
in $E_0=\{x_0=0\}$ as a set, and that $3=0$ on $E_0$.  The fixed point
scheme is defined by: $I(x_0)=x_0^2y_1(-1+O(x_0))$,
$I(y_1)=1+O(x_0)$, and $I(y_2)=x_0y_2(y_1-y_2+O(x_0))$.
By the second equation, $Y_1^G$ is empty
in this open set.

Next, consider the open set $U_1=\{y_1=1\}$ in $Y_1$. Then
$(x_0,x_1,e_2)=(x_1y_0,x_1,x_1y_2)$, and $G$ acts on
$$U_1=\{(y_0,x_1,y_2)\in A^3_{\Z_3}:0=3-3x_1^2y_2^2+x_1^3y_2^3,
\; 1+x_1\neq 0,\; 1-x_1+x_1y_2\neq 0\}.$$
Namely, $G$ acts by
$$\tau(y_0,x_1,y_2)=\bigg( \frac{y_0-3}{y_0-2},\;
\frac{x_1(y_0-2)}{1+x_1},\;
\frac{y_2(1+x_1)(1+2x_1y_2-x_1^2y_2^2)}{y_0-2}\bigg).$$
Here $E_0=\{x_1=0\}$. We know that $Y_1^G$ is contained in $E_0=\{x_1=0\}$,
as a set (and hence $3=0$ on $(Y_1)^G_{\red}$). More precisely,
the fixed point scheme $Y_1^G$ is defined by:
$I(y_0)=(-y_0^2+O(x_1))/(1+y_0+O(x_1))$,
$I(x_1)=x_1(y_0+O(x_1))$, and $I(y_2)=y_2(-y_0+O(x_1))/(1+y_0+O(x_1))$.
So $Y_1^G$ is the line $\{y_0=x_1=0\}$, as a set.

Finally, consider the chart $\{y_2=1\}$ in $Y_1$.
We have coordinates $(y_0,y_1,e_2)$, with
$(x_0,x_1,e_2)=(e_2y_0,e_2y_1,e_2)$, and $E_0=\{e_2=0\}$.
Here $G$ acts on the open set
$$U_2=\{(y_0,y_1,e_2)\in A^3_{\Z_3}:0=3-3e_2^2+e_2^3,\;
1+e_2y_0\neq 0,\; 1+e_2y_0-e_2y_1\neq 0\}.$$
Namely, $G$ acts by
$$\tau(y_0,y_1,e_2)=\bigg( \frac{(y_0-3y_1)(1+e_2-e_2^2)}{1+e_2y_1},\;
\frac{(y_0-2y_1)(1+e_2-e_2^2)}{1+e_2y_1},\;
3+e_2-e_2^2\bigg).$$
The fixed point scheme $Y_1^G$ is defined by:
$I(y_0)=e_2(y_0-y_0y_1+O(e_2))$, $I(y_1)=y_0+O(e_2)$,
and $I(e_2)=e_2^2(-1+O(e_2))$. We know that the fixed point
set is contained in $E_0=\{e_2=0\}$. We read off that the fixed point set
is the line $0=y_0=e_2$, which is the same line seen in the previous
chart. Thus we have shown that the fixed point set in all of $Y_1$
is a curve isomorphic to $\P^1_{\F_3}$ in $E_0$.

Seeking to make the fixed point set a divisor,
we let $Y_2$ be the blow-up of $Y_1$ along the (reduced)
$G$-fixed curve, with exceptional divisor $E_1\subset Y_2$.
It is clear that $Y_2^G$ is contained in $E_1$ as a set.
We will see that the fixed point scheme $Y_2^G$ is equal
to the Cartier divisor $E_1$
except at six points on $E_1$. These correspond
exactly to the six bad points that occur
in the example over $\F_3$ (Figure \ref{F3figure}).
Since $E_1$ is a $\P^1$-bundle
over $\P^1_{\F_3}$, we will need to look
at four affine charts to see all of it.

First work over the open subset $\{y_1=1\}$ in $Y_1$, with
coordinates $(y_0,x_1,y_2)$, where $E_0=\{x_1=0\}$.
Since $Y_2$ is the blow-up
of $Y_1$ along the $G$-fixed curve $\{0=y_0=x_1\}$, this part
of $Y_2$ is given by
$$\{ ((y_0,x_1,y_2),[w_0,w_1])\in A^3_{\Z_3}\times \P^1_{\Z_3}:
0=3-3x_1^2y_2^2+x_1^3y_2^3,\; y_0w_1=x_1w_0\}.$$

First consider the open set $\{w_0=1\}\subset Y_2$
over $\{y_1=1\}\subset Y_1$. Then $(y_0,x_1,y_2)=(y_0,y_0w_1,
y_2)$, $E_0=\{w_1=0\}$, and $E_1=\{y_0=0\}$. Also, $e_2=y_0w_1y_2$,
and so $3=3y_0^2w_1^2y_2^2-y_0^3w_1^3y_2^3$.
Here $G$ acts by
\begin{multline*}
\tau(y_0,w_1,y_2)=\bigg(\frac{y_0-3}{y_0-2},\;
\frac{w_1(y_0-2)^2}{(1+y_0w_1)(1-3y_0w_1^2y_2^2+y_0^2w_1^3y_2^3)},\\
\frac{y_2(1+y_0w_1)(1+2y_0w_1y_2-y_0^2w_1^2y_2^2)}{y_0-2}\bigg).
\end{multline*}
The fixed point scheme $Y_2^G$ (near $E_1$) is defined by:
$I(y_0)=y_0^2(-1+O(y_0))$, $I(w_1)=y_0w_1(-1-w_1+O(y_0))$,
and $I(y_2)=y_0y_2(-1+w_1-w_1y_2+O(y_0))$.
So $Y_2^G$
is the Cartier divisor $E_1$ except where $0=y_2(1-w_1+w_1y_2)$
and $0=w_1(1+w_1)$. So we have found three bad points,
$(y_0,w_1,y_2)=(0,0,0)$ in $E_0\cap E_1$ and
$(0,-1,0)$, and $(0,-1,-1)$ in $E_1$.

The other chart over $\{y_1=1\}$ in $Y_1$
is $\{w_1=1\}$ in $Y_2$.
Then $(y_0,x_1,y_2)=(w_0x_1,x_1,y_2)$,
$E_0$ does not appear, and
$E_1=\{x_1=0\}$. Also, $e_2=x_1y_2$, and so
$3=3x_1^2y_2^2-x_1^3y_2^3$. Here $G$ acts by
\begin{multline*}
\tau(w_0,x_1,y_2)=\bigg(
\frac{(w_0-3x_1y_2^2+x_1^2y_2^3)(1+x_1)}{(w_0x_1-2)^2},\;
\frac{x_1(w_0x_1-2)}{1+x_1},\\
\frac{y_2(1+x_1)(1+2x_1y_2-x_1^2y_2^2)}{w_0x_1-2}\bigg).
\end{multline*}
So the fixed point scheme $Y_2^G$ (near $E_1$) is defined by:
$I(w_0)=x_1(w_0+w_0^2+O(x_1))$, $I(x_1)=x_1^2(-1+w_0+O(x_1))$,
and $I(y_2)=x_1y_2(1-w_0-y_2+O(x_1))$.
So $Y_2^G$ is the Cartier divisor
$E_1$ except where $x_1=0$ (so $3=0$), $0=w_0(1+w_0)$,
and $0=y_2(1-w_0-y_2)$. So the bad points are $(w_0,x_1,y_2)=(0,0,0)$,
$(0,0,1)$, $(-1,0,0)$, and $(-1,0,-1)$. 
The points with $w_0\neq 0$ appeared in the previous
chart, $\{w_0=1\}$. So we have two new bad points, $(w_0,x_1,y_2)$
equal to $(0,0,0)$ or $(0,0,1)$, for a total of five so far.

To see all of $E_1$ in $Y_2$, we also have to
work over the open set $\{y_2=1\}\subset Y_1$,
with coordinates $(y_0,y_1,x_2)$, where $E_0=\{x_2=0\}$.
The corresponding open subset of $Y_2$ is an open subset
of
$$\{ ((y_0,y_1,e_2),[z_0,z_2])\in A^3_{\Z_3}\times_{\Z_3}
\P^1_{\Z_3}: 0=3-3e_2^2+e_2^3,\; y_0z_2=e_2z_0\}.$$
First consider the chart $\{z_0=1\}\subset Y_2$.
Then $(y_0,y_1,e_2)=(y_0,y_1,y_0z_2)$, $E_0=\{z_2=0\}$,
and $E_1=\{y_1=0\}$. Also,
$e_2=y_0z_2$, and so $0=3-3y_0^2z_2^2+y_0^3z_2^3$.
Here $G$ acts by
\begin{multline*}
\tau(y_0,y_1,z_2)=\bigg(
\frac{(y_0-3y_1)(1+y_0z_2-y_0^2z_2^2)}{1+y_0y_1z_2},\\
\frac{(y_0-2y_1)(1+y_0z_2-y_0^2z_2^2)}{1+y_0y_1z_2},\;
\frac{z_2(4+y_0z_2-y_0^2z_2^2)(1+y_0y_1z_2)}{1-3y_0y_1z_2^2+y_0^2y_1z_2^3}
\bigg).
\end{multline*}
The fixed point scheme (near $E_1$)
is defined by: $I(y_0)=y_0^2z_2(1-y_1+O(y_0))$,
$I(y_1)=y_0(1+y_1z_2-y_1^2z_2+O(y_0))$,
and $I(z_2)=y_0z_2^2(1+y_1+O(y_0))$.
So $Y_2^G$ is the Cartier divisor
$E_1$ except where $y_0=0$ (so $3=0$), $0=1+y_1z_2-y_1^2z_2$,
and $0=z_2^2(1+y_1)$. So we have one bad point in this open set,
$(y_0,y_1,z_2)=(0,-1,-1)$. This already appeared in the chart $\{w_0=1\}$
over $\{y_1=1\}$ (as the point $(y_0,w_1,y_2)=(0,-1,-1)$).

The other chart is $\{z_2=1\}\subset Y_2$ over $\{y_2=1\}$ in $Y_1$,
with coordinates $(z_0,y_1,e_2)$. Here $y_0=e_2z_0$,
$E_0$ does not appear,
and $E_1=\{e_2=0\}$. Here $G$ acts by
\begin{multline*}
\tau(z_0,y_1,e_2)=\bigg( \frac{(z_0-3e_2y_1+e_2^2y_1)
(-2-e_2+2e_2^2)}{1+e_2y_1},\\
\frac{(e_2z_0-2y_1)(1+e_2-e_2^2)}{1+e_2y_1},\;
3+e_2-e_2^2\bigg).
\end{multline*}
We know that the fixed point scheme $Y_2^G$ 
is contained in $E_1$ as a set. (Also, $3=O(e_2)$ by the equation
for $Y_2$.) Explicitly, the fixed point scheme (near $E_2$)
is defined by:
$I(z_0)=e_2(-z_0-z_0y_1+O(e_2))$, $I(y_1)=e_2(z_0+y_1-y_1^2+O(e_2))$,
and $I(e_2)=e_2^2(-1+O(e_2))$.
So $Y_2^G$ is the Cartier divisor $E_1$ except
where $e_2=0$ (so $3=0$), $0=z_0(1+y_1)$, and $0=z_0+y_1-y_1^2$.
So we see three bad points in $E_1$: 
$(z_0,y_1,x_2)=(0,1,0)$, $(-1,-1,0)$, or $(0,0,0)$. The first two
points appeared in the chart $\{w_1=1\}$ in $Y_2$ over
$\{y_1=1\}$ in $Y_1$ (as $(w_0,x_1,y_2)=(0,0,1)$ or $(-1,0,-1)$,
respectively). On the other hand, the origin in this chart is new;
so we have a total of six bad points.

Thus the fixed point scheme $Y_2^G$
is $E_1$ with multiplicity 1
except at six points on $E_1$.
As in Theorem \ref{F3thm},
Theorem \ref{mup} shows that five of the singular points
of $Y_2/G$ are toric singularities of the form
$\frac{1}{3}(1,1,2)$ (hence terminal),
while the sixth is a toric singularity
of the form $\frac{1}{3}(1,1,1)$ (hence canonical).
In fact, our calculations of $I=\sigma-1$ on the coordinates
in this section are identical to those in section
\ref{char3}, to the accuracy we state. We also need
to check the assumption in Theorem \ref{mup}
that $p\in e^{p-1}\m$,
that is, that $3\in e^2\m$. This is true because
$3=e_2^3(\text{unit})$ on $Y_2$, and $e_2$ is a multiple
of the function $e$ defining $E_1$ in each coordinate chart;
so 3 is in the ideal $(e^3)$, hence in $e^2\m$ at each of the bad
points. As a result, Theorem \ref{mup} gives the
conclusions above about the 6 singular points of $Y_2/G$.

The calculation of the discrepancies of $Y_0/G$ is likewise
identical to the calculation in section \ref{char3}.
Therefore, $Y_0/G$ is terminal.
We showed earlier that it is not Cohen-Macaulay.
Theorem \ref{Z3thm} is proved.
\end{proof}

\begin{remark}
\label{Z3cartier}
In Theorem \ref{Z3thm}, the canonical class
of $Y_0/G$ is not Cartier.
I expect that there is also a 3-dimensional scheme $X$,
flat over $\Z_3$, that is terminal and non-Cohen-Macaulay
with $K_X$ Cartier. Namely, 
one should replace the $p$-adic integer ring $R=\Z_3[\zeta_9]^{\Z/2}$
in Theorem \ref{Z3thm} by $S=\Z_3[\zeta_9]$, with the action
of $G=\Z/3\subset (\Z/9)^*$. The point is that the canonical sheaf
of $S$ over $\Z_3$ has a $G$-equivariant trivialization.
\end{remark}

\section{Characteristic 5}
\label{char5}

\begin{theorem}
\label{F5thm}
Let the group $G=\Z/5$ with generator $\tau$
act on the quintic del Pezzo surface $S_5$ over $\F_5$ by an embedding
of $G$ into the symmetric group $\Sigma_5=\Aut(S_5)$, and let $G$ act
on $\P^1$ by
$$\tau([y_0,y_1])=[y_0,y_0+y_1].$$
Then $(S_5\times \P^1)/G$
is terminal, not Cohen-Macaulay,
and of dimension 3 over $\F_5$.
\end{theorem}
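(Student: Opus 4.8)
The plan is to mirror the proof of Theorem \ref{F3thm} as closely as the geometry allows, with $\P^2$ replaced by the quintic del Pezzo surface $S_5$ and the cyclic-permutation action replaced by Artin's wild $\Z/5$-action, whose surface quotient is an $E_8$ singularity. Write $G=\Z/5=\langle\sigma:\sigma^5=1\rangle$ with $\tau=\sigma^{-1}$, and set $Y_0=S_5\times\P^1$, $X=Y_0/G$. The action $\tau([y_0,y_1])=[y_0,y_0+y_1]$ is unipotent of order $5$ over $\F_5$, so its only fixed point on $\P^1$ is $[0,1]$; combined with the single fixed point $Q$ of the $5$-cycle on $S_5$ in characteristic $5$ (the point lying under the $E_8$ singularity of $S_5/G$, Artin's example), the only fixed point of $G$ on $Y_0$ is the closed point $P=(Q,[0,1])$, with residue field $\F_5$. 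Hence $X$ is normal of dimension $3$ over $\F_5$, is smooth away from the image of $P$, and $5K_X$ is Cartier. Since $P$ is an isolated fixed point of $G=\Z/p$ acting on a smooth $3$-fold in characteristic $p=5$, Fogarty's result \cite[Proposition 4]{Fogartydepth} shows at once that $X$ is not Cohen-Macaulay at $P$, just as in Theorem \ref{F3thm}.

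The substance of the proof is terminality of $X$. First I would record Artin's local action at $P$: coordinates $(x_1,x_2)$ for $S_5$ at $Q$ in which $\sigma$ takes an explicit form, together with $x_3=y_0/y_1$ on $\P^1$, for which $\tau(x_3)=x_3/(1+x_3)$. I would then make a sequence of $G$-equivariant blow-ups $Y_N\to\cdots\to Y_1\to Y_0$ along the successive reduced fixed loci, stopping as soon as the quotient has only toric singularities rather than continuing to a full resolution. Because the fixed point comes from an $E_8$ singularity rather than the milder singularity of the characteristic-$3$ case, I expect to need more than the two blow-ups of Theorem \ref{F3thm} (this is the ``even more complicated'' case announced in the introduction). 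At each stage I would compute the fixed point scheme chart by chart through $I(\cdot)=\sigma(\cdot)-\cdot$ applied to the coordinates, watching the fixed locus evolve from the point $P$ to curves and finally to a divisor $E$ on $Y_N$. The aim is a $Y_N$ whose fixed point scheme $Y_N^G$ equals the Cartier divisor $E$ with multiplicity $1$ except at finitely many bad points, and at each bad point the hypotheses of Theorem \ref{mup} should hold with $e=s$ a local equation of $E$, so that $I(s)=s^2(\text{unit})$ and $I(x_i)\in(s)$; the remaining hypothesis $p\in e^{p-1}\m$ is automatic since $5=0$ on $Y_N$. Theorem \ref{mup} then presents each bad point as a $\mu_5$-quotient singularity $\tfrac15(b_1,b_2,b_3)$, the weights $b_i$ being the $\F_5$-eigenvalues of the map $\varphi(y)=I(y)/e$ on $\m/\m^2$.

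With the singularities recognized as toric, terminality is settled by the Reid--Tai criterion (Theorem \ref{reidtai}): each $\tfrac15(b_1,b_2,b_3)$ is terminal or canonical according to whether $\sum_j(ib_j\bmod 5)$ exceeds or merely reaches $5$ for all $i=1,\dots,4$, and I expect the pattern of Theorem \ref{F3thm} to recur, with most bad points terminal and at worst a few merely canonical. To descend from $Y_N/G$ to $Y_0/G$, I would track discrepancies exactly as there. The blow-ups of the smooth $3$-fold $Y_0$ along points and smooth curves give $K_{Y_N}=\pi^*K_{Y_0}+\sum_i c_iE_i$ with explicit positive integers $c_i$; the quotient map $f\colon Y_N\to Y_N/G$ is unramified along each exceptional divisor on which $G$ acts nontrivially (so $f^*F_i=E_i$ there) and, by Corollary \ref{fierce}, fiercely ramified along the divisorial part of the fixed locus, contributing the factor $p-1=4$ to the ramification divisor. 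Combining these relations as in Theorem \ref{F3thm}, together with $K_{Y_0}=f^*K_{Y_0/G}$ (since $f$ is \'etale in codimension $1$ on $Y_0$), yields $K_{Y_N/G}=\pi^*K_{Y_0/G}+\sum_i a_iF_i$ with every $a_i>0$. Since $Y_N/G$ is terminal away from the possibly-canonical bad points, and those points lie on exceptional divisors $F_i$ of positive discrepancy over $Y_0/G$, it follows that $X=Y_0/G$ is terminal.

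The main obstacle is the explicit local computation, which is heavier here than in any earlier case. In contrast to the transparent permutation action of Theorem \ref{F3thm}, Artin's $\Z/5$-action on $S_5$ must first be written in coordinates at $Q$ in a form from which $I(x_i)$ and the data $e,s$ of Theorem \ref{mup} can be read off; then the longer blow-up tower must be carried through several affine charts at each stage, recomputing $Y_j^G$ each time, and the resulting list of bad points and weights $\tfrac15(b_1,b_2,b_3)$ must be assembled correctly. This bookkeeping is where essentially all the work---and the risk of arithmetic error---lies. Everything else (non-Cohen-Macaulayness via Fogarty, the toric recognition via Theorem \ref{mup}, the Reid--Tai check, and the discrepancy descent) is structurally identical to the proof of Theorem \ref{F3thm}.
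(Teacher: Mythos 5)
Your overall strategy is the paper's: Fogarty for the failure of Cohen--Macaulayness, a tower of $G$-equivariant blow-ups stopping at toric singularities recognized by Theorem \ref{mup}, and a discrepancy computation to descend terminality to $Y_0/G$. (The paper uses seven blow-ups $Y_7\to\cdots\to Y_0$, and at one stage deliberately blows up a point rather than the whole reduced fixed curve to keep the computation manageable; it also takes the explicit action on $S_5$ from de Fernex's normal form of the order-$5$ Cremona action rather than directly from Artin's local form. Those are tactical choices within your framework.)

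There is, however, a genuine gap in your final inference. You expect the partial resolution $Y_N/G$ to be terminal away from finitely many merely canonical points, and you conclude as in Theorem \ref{F3thm} that positivity of the discrepancies $a_i$ then forces $Y_0/G$ to be terminal. In the actual example this expectation fails: $Y_7/G$ has toric singularities along entire curves of the form $\frac{1}{5}(0,1,2)$, $\frac{1}{5}(-2,1,0)$, $\frac{1}{5}(2,2,0)$, etc., and a singularity such as $\frac{1}{5}(0,1,2)$ is not even canonical (the Reid--Tai sum for $i=1$ is $0+1+2=3<5$). So the clean dichotomy ``terminal outside a canonical point lying on a positive-discrepancy divisor'' is not available, and the argument you propose for the last step does not go through. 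What the paper does instead is show that the \emph{pair} $\bigl(Y_7/G,\,D\bigr)$ is terminal, where $D=-2F_0-3F_1-2F_2-F_3-4F_4-4F_5-4F_6$ is the negative of the discrepancy divisor; the negative coefficients compensate for the non-canonical toric singularities. Carrying this out requires two ingredients absent from your proposal: Theorem \ref{mup-divisor}, which identifies the relevant exceptional divisors $F_j$ as toric divisors on the $\mu_5$-quotient charts and computes their weights, and Theorem \ref{reidtaipair}, the Reid--Tai criterion for toric pairs, which is then checked case by case at the six non-terminal loci. Without this pair-version bookkeeping, terminality of $Y_0/G$ is not established even granting all your local computations.
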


We define the quintic del Pezzo surface (over any field)
as the moduli space $\overline{M_{0,5}}$ of 5-pointed stable curves
of genus 0. That makes it clear that the symmetric group $\Sigma_5$
acts on this surface.

\begin{proof}
We work throughout over $k=\F_5$. Write $G=\Z/5=\langle \sigma:
\sigma^5=1\rangle$, with $\tau:=\sigma^{-1}$. Let $Y_0=S_5\times \P^1$
and $X=Y_0/G$. In characteristic 5, $G$ has only one fixed point
in $S_5$, and so $G$ has only one fixed point $P$ in $Y_0$.
So $X$ is normal of dimension 3,
and $X$ is smooth over $k$ outside the image of $P$ (which we also
call $P$). Also, $5K_X$ is Cartier. By Fogarty, since $P$ is an isolated
fixed point of $G=\Z/p$ on a smooth 3-fold in characteristic $p$,
$X$ is not Cohen-Macaulay at $P$ \cite[Proposition 4]{Fogartydepth}.

It remains to show that $X$ is terminal. This example is more complicated
than those in characteristics 2 and 3, and it may be impossible
to resolve the singularities
of $X$ by performing $G$-equivariant blow-ups of $Y_0$. (Indeed,
in the simpler situation of actions of $G=\Z/p$ in characteristic zero,
one cannot always resolve the singularities of a quotient
$Y/G$ via $G$-equivariant
blow-ups of $Y$ when $p\geq 5$ \cite[Claim 2.29.2]{Kollarres}.)
Fortunately,
as in earlier sections, we can reach toric singularities
after some $G$-equivariant blow-ups. It will then be easy
to check that $Y_0/G$ is terminal.

The $G$-action on $S_5$ over $k$ is given
on an open subset isomorphic to an open subset of $A^2$ by:
$$\tau(s_0,s_1)=\bigg( \frac{s_0-s_1+s_0^2+s_0s_1}{(1-2s_0)(1-s_0-s_1)},\;
\frac{s_1-2s_0^2-2s_0s_1}{(1-2s_0)(1-s_0-s_1)}\bigg).$$
Here the fixed point is at the origin.
(Section \ref{Z5} explains where this formula comes from.)
So the $G$-action on an open subset $U\subset Y_0$ is given
(on an open neighborhood of the origin in $A^3$) by:
$$\tau(s_0,s_1,s_2)=\bigg( \frac{s_0-s_1+s_0^2+s_0s_1}{(1-2s_0)(1-s_0-s_1)},\;
\frac{s_1-2s_0^2-2s_0s_1}{(1-2s_0)(1-s_0-s_1)},
\frac{s_2}{1+s_2}\bigg).$$
As we blow up, we will not need to keep track of the precise
affine open set on which $G$ acts, since we are only concerned with
the action near the fixed point set.

Let $Y_1$ be the blow-up of $Y_0$ at the $G$-fixed point,
which is the origin in these coordinates. Then the open subset of $Y_1$
over $U\subset Y_0$ is
$$\{ ((s_0,s_1,s_2),[y_0,y_1,y_2])\in U\times \P^2:
s_0y_1=s_1y_0,\; s_0y_2=s_2y_0,s_1y_2=s_2y_1\}.$$
The exceptional divisor $E_0$ is isomorphic to $\P^2$.
It turns out that the fixed point set of $G$ on $Y_1$
is a curve isomorphic to $\P^1$ in $E_0$. To check that,
first work in the open subset
$\{y_0=1\}$ in $Y_1$. Here $(s_0,s_1,s_2)=(s_0,s_0y_1,s_0y_2)$,
and $G$ acts by
\begin{multline*}
\tau(s_0,y_1,y_2)=\bigg(
\frac{s_0(1+s_0-y_1+s_0y_1)}{(1-2s_0)(1-s_0-s_0y_1)},\;
\frac{-2s_0+y_1-2s_0y_1}{1+s_0-y_1+s_0y_1},\;\\
\frac{y_2(1-2s_0)(1-s_0-s_0y_1)}{(1+s_0y_2)(1+s_0-y_1+s_0y_1)}\bigg) .
\end{multline*}
Here $E_0=\{s_0=0\}$.
The fixed point scheme $Y_1^G$ is defined by the vanishing of:
$I(s_0)=s_0(-y_1+O(s_0))$, $I(y_1)=(y_1^2+O(s_0))/(1-y_1+O(s_0))$,
and $I(y_2)=y_2(y_1+O(s_0))/(1-y_1+O(s_0))$. We know that $Y_1^G$
is contained (as a set) in $E_0$ (since $Y_0^G$ is only the origin).
So the fixed point set is the line
$\{0=s_0=y_1\}$, in this chart.

In the chart $\{y_1=1\}$ in $Y_1$, we have $s_0=s_1y_0$
and $s_2=s_1y_2$, so we have coordinates $(y_0,s_1,y_2)$. Here
$E_0=\{s_1=0\}$. We can write the action of $G$ in these coordinates
(for example using Magma). We find that the fixed point scheme $Y_1^G$
is defined by:
$I(y_0)=-1+O(s_1)$, $I(s_1)=s_1^2(1+y_0-2y_0^2)$,
and $I(y_2)=s_1y_2(-1-y_0-y_2+2y_0^2)$. Since $Y_1^G$ is contained
(as a set) in $E_0$, the first equation shows that $Y_1^G$ is empty,
in this chart. In the last chart $\{y_2=1\}$ in $Y_1$,
we have coordinates $(y_0,y_1,s_2)$, and $E_0=\{ s_2=0\}$.
The fixed point scheme is defined by:
$I(y_0)=-y_1+O(s_2)$, $I(y_1)=s_2(y_1+y_1^2+y_0y_1-2y_0^2+O(s_2))$,
and $I(s_2)=s_2^2(-1+O(s_2))$. Since $Y_1^G$ is contained 
(as a set) in $E_0$, the fixed point set is the line
$\{0=y_1=s_2\}$, the same line seen in an earlier chart.

Thus $(Y_1^G)_{\red}$ is isomorphic to $\P^1$.
Our criterion for a quotient by $G$
to have toric singularities (Theorem \ref{mup}) requires
the $G$-fixed locus to have codimension 1; so let $Y_2$ be the blow-up
of $Y_1$ along this $\P^1$. Clearly $G$ continues to act on $Y_2$.
The exceptional divisor $E_1$ in $Y_2$ is a $\P^1$-bundle over
$\P^1$, and so the natural way to cover $E_1$ by affine charts
involves 4 charts, as follows.

\def\edge{1cm}
\def\rad{0.05}
% One could omit the \def command, by changing it in the figure, below.
\begin{figure}
\begin{center}
\begin{tikzpicture}[scale=0.7]
\draw (0,0) 
  ++(-90:2*\edge) node{$Y_0$};
\draw[fill,red] (0,0) circle [radius=\rad];
\draw[->] ++(0:1.8*\edge)
  -- ++(180:0.6*\edge);
\begin{scope}[xshift=3*\edge]
\draw (0,0) ++(0:1.2990381057\edge) ++(-90:2*\edge)
  node{$Y_1$};
% That's sqrt(3)*(3/4).
\draw (0,0) ++(0:1.7320508*\edge) node[anchor=north]{$E_0$};
% That's sqrt(3).
\draw (0,0) -- ++(-30:3*\edge);
\draw (0,0) -- ++(30:3*\edge);
\draw[ultra thick,red] (0,0) ++(-30:3*\edge)
  -- ++(90:3*\edge);
\draw[->] ++(0:4.2*\edge)
  -- ++(180:0.6*\edge);
\end{scope}
\begin{scope}[xshift=8*\edge]
\draw (0,0) ++(0:1.5*1.7320508*\edge) ++(-90:2*\edge)
  node{$Y_2$};
% That's sqrt(3).
\draw (0,0) ++(30:3*\edge) -- ++(-150:3*\edge)
  -- ++(-30:3*\edge);
\draw (0,0)  ++(-30:3*\edge) -- ++(0:3*\edge)
  -- ++(90:3*\edge);
\draw[ultra thick,red] (0,0)  ++(-30:3*\edge) ++(0:3*\edge)
  ++(90:3*\edge) -- ++(180:3*\edge) -- ++(-90:3*\edge);
\draw (0,0) ++(0:1.7320508*\edge) node[anchor=north]{$E_0$};
% That's sqrt(3).
\draw (0,0) ++(0:1.5*1.7320508*\edge) ++(0:\edge)
  node[anchor=north]{$E_1$};
% That's sqrt(3).
\end{scope}
\end{tikzpicture}
\end{center}
\caption{For $j=0,1,2$, $G$ acts freely on $Y_j$ outside the red loci.}
\label{F5figure012}
\end{figure}
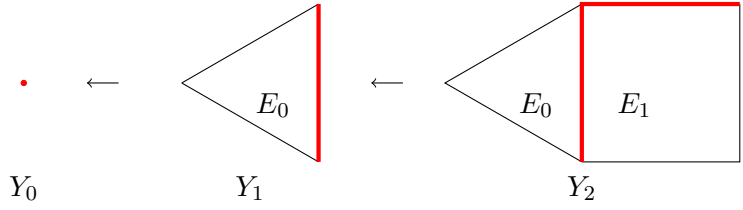

Over the open set $\{y_0=1\}$ in $Y_1$, $Y_2$ is the blow-up
along the $G$-fixed curve $\{0=s_0=y_1\}$, so $Y_2$ has coordinates
$((s_0,y_1,y_2),[w_0,w_1])$. First take $\{w_0=1\}$,
so $y_1=s_0w_1$, and we have coordinates $(s_0,w_1,y_2)$.
Here $E_0$ does not appear, and $E_1=\{s_0=0\}$. The fixed point
scheme $Y_2^G$ is defined by: $I(s_0)=s_0^2(-1-w_1+O(s_0))$,
$I(w_1)=-2+O(s_0)$, and $I(y_2)=s_0y_2(1+w_1-y_2+O(s_0))$.
We know that the fixed point set is contained in $E_1$, and so the formula
for $I(w_1)$ implies that $Y_2^G$ is empty, in this chart.
In the other chart $\{w_1=1\}$ in $Y_2$ over the same open set in $Y_1$,
we have $s_0=y_1w_0$, and so $Y_2$ has coordinates $(w_0,y_1,y_2)$.
Here $E_0=\{w_0=0\}$ and $E_1=\{y_1=0\}$. The fixed point scheme
is defined by $I(w_0)=w_0(2w_0+O(y_1))/(1-2w_0+O(y_1))$,
$I(y_1)=y_1(-2w_0+O(y_1))$, and $I(y_2)=y_1y_2(1+w_0
-w_0y_2+O(y_1))$. So $Y_2^G$ is the line $\{0=w_0=y_1\}=E_0\cap E_1$,
in this chart.

To see the rest of $E_1\subset Y_2$, work over the open set
$\{y_2=1\}$ in $Y_1$. Here $Y_2$ is the blow-up along
the $G$-fixed curve $\{0=y_1=s_2\}$, so $Y_2$ has coordinates
$((y_0,y_1,s_2),[r_1,r_2])$. First take $\{r_1=1\}$ in $Y_2$,
so $s_2=y_1r_2$, and we have coordinates $(y_0,y_1,r_2)$.
Here $E_0=\{r_2=0\}$ and $E_1=\{y_1=0\}$. Here $Y_2^G$ is given by
$I(y_0)=y_1(-1+y_0r_2-y_0^2r_2+O(y_1))$, $I(y_1)
=y_1r_2(-2y_0^2+O(y_1))$,
and $I(r_2)=r_2^2(2y_0^2+O(y_1))/(1-2y_0^2r_2+O(y_1))$.
We know that the fixed point set is contained in $E_1$,
and we read off that it is the union of the two lines
$\{0=y_1=r_2\}=E_0\cap E_1$ and $\{0=y_0=y_1\}$ in $E_1$. The first curve
appeared in an earlier chart, and the second is new.
Finally, the other open set is $\{r_2=1\}$ in $Y_2$,
so $y_1=s_2r_1$, and we have coordinates $(y_0,r_1,s_2)$.
Here $E_0$ does not appear, and $E_1=\{s_2=0\}$. Here $Y_2^G$
is given by $I(y_0)=s_2(y_0-r_1-y_0^2+O(s_2))$,
$I(r_1)=-2y_0^2+O(s_2)$, and $I(s_2)=s_2^2(-1+O(s_2))$.
We read off that the fixed point set is the curve $\{0=y_0=s_2\}$,
which is the second curve in the previous chart.

Thus $(Y_2)^G$ as a set is the union of two $\P^1$'s meeting
at a point. We are trying to make the fixed locus have codimension 1,
and so our next step is to blow up one of those curves.
Namely, let $Y_3$ be the blow-up of $Y_2$ along the $G$-fixed curve
$E_0\cap E_1$. The exceptional divisor $E_2$ in $Y_3$ is a $\P^1$-bundle
over $\P^1$, and so we need to look at four affine charts to see all of it.
\begin{figure}
\includegraphics[width=\textwidth]{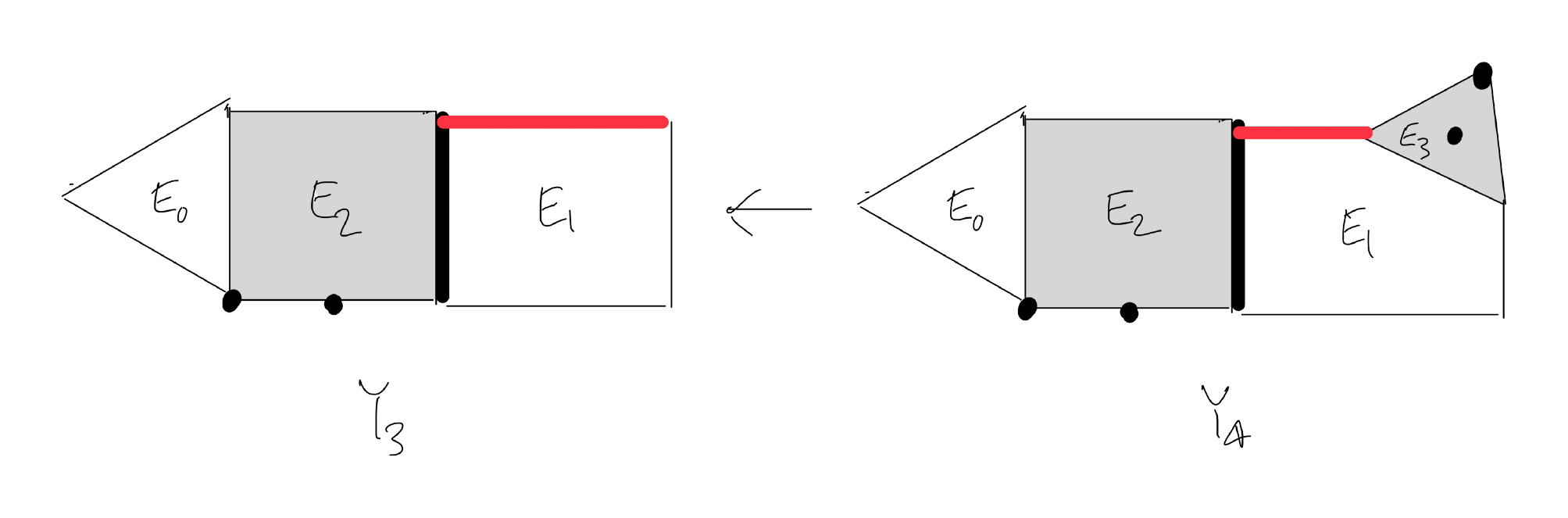}
\caption{For $j=3,4$, $G$ acts freely on $Y_j$ outside the shaded
or marked loci; $Y_j/G$ is regular outside the marked loci;
and $Y_j/G$ has toric singularities outside the red loci.}
\label{F5figure34}
\end{figure}

First, work over the open set $\{r_1=1\}$ in $Y_2$
over $\{y_2=1\}$ in $Y_1$. Then $Y_3$ is the blow-up
along the curve $\{0=y_1=r_2\}=E_0\cap E_1$, and so $Y_3$ has coordinates
$(y_0,y_1,r_2),[z_1,z_2]$. First take $\{z_1=1\}$, so $r_2=y_1z_2$,
and we have coordinates $(y_0,y_1,z_2)$. Here $E_0=\{z_2=0\}$,
$E_1$ does not appear, and $E_2=\{y_1=0\}$. The fixed point scheme $Y_3^G$
is defined by: $I(y_0)=y_1(-1+O(y_1))$, $I(y_1)=y_1^2z_2(-2y_0^2+O(y_1))$,
and $I(z_2)=y_1z_2^2(-y_0^2+O(y_1))$. These equations are equivalent
to $y_1=0$, near $E_2$; so the fixed point scheme $Y_2^G$ is the Cartier
divisor $E_2$, in this chart. (Thus, by Theorem \ref{kl},
$Y_2/G$ is smooth over $k=\F_5$, in this open set.)

The other chart
is $\{z_2=1\}$ in $Y_3$, so $y_1=r_2z_1$, and we have coordinates
$(y_0,z_1,r_2)$. Here $E_0$ does not appear, $E_1=\{z_1=0\}$,
and $E_2=\{r_2=0\}$. The fixed point scheme $Y_3^G$ is given by
$I(y_0)=z_1r_2(-1+O(r_2))$, $I(z_1)=z_1r_2(y_0^2+O(r_2))$,
and $I(r_2)=r_2^2(2y_0^2+O(r_2))$. The fixed point scheme is generically
$E_2$ with multiplicity 1, together with the other fixed curve we knew
from $Y_2$, here given by $\{0=y_0=z_1\}\subset E_1$.
In more detail, the ``bad locus'' where the scheme $Y_3^G$ is not just
$E_2$ as a Cartier divisor is given by removing a factor of $r_2$
from these equations, yielding: $0=z_1(-1+O(r_2))$, $0=z_1(y_0^2+O(r_2))$,
and $0=r_2(2y_0^2+O(r_2))$. We know the fixed locus away from $E_2$,
so assume that $r_2=0$; then these equations show that the bad locus
inside $E_2$ is the curve $\{0=z_1=r_2\}=E_1\cap E_2$.

Fortunately, Theorem \ref{mup} implies that $Y_3/G$ has toric singularities
at points of $E_1\cap E_2$ outside the origin. Namely, let $e=r_2$
and $s=z_1$; then $I(s)=es(\text{unit})$ near $E_1\cap E_2=\{0=z_1=r_2\}$
outside the origin. The theorem gives that $Y_3/G$ has singularity
$\frac{1}{5}(0,1,2)$ at points of $E_1\cap E_2$ outside the origin.

To see all of $E_2$, we also have to work
over $\{w_1=1\}$ in $Y_2$, with coordinates $(w_0,y_1,y_2)$,
over $\{y_0=1\}$ in $Y_1$.
Here $Y_3$ is the blow-up along the $G$-fixed curve
$\{0=w_0=y_1\}=E_0\cap E_1$, so $Y_3$ has coordinates
$(w_0,y_1,y_2),[v_0,v_1]$. First take $\{v_0=1\}$,
so $y_1=w_0v_1$, and we have coordinates $(w_0,v_1,y_2)$ on $Y_3$.
Here $E_0$ does not appear, $E_1=\{v_1=0\}$,
and $E_2=\{w_0=0\}$. The fixed point scheme is defined by:
$I(w_0)=w_0^2(2-2v_1+O(w_0))$, $I(v_1)=w_0v_1(1-2v_1+O(w_0))$,
and $I(y_2)=w_0v_1y_2(1+O(w_0))$. In the chart we are working
over in $Y_2$, the fixed set
$Y_2^G$ is only the curve $E_0\cap E_1$ we are blowing up, and so
$Y_3^G$ (in this chart) is contained in $E_2$ as a set. By the equations,
$Y_3^G$ is generically the Cartier divisor $E_2$, and the bad locus
(where that fails) is given by $0=w_0$, $0=v_1(1-2v_1)$,
and $0=v_1y_2$. So the bad locus is the union of the curve
$\{0=w_0=v_1\}=E_1\cap E_2$ and the point $(w_0,v_1,y_2)=(0,
-2,0)$ in $E_2$. By Theorem \ref{mup} (using $e=s=w_0$),
$Y_3/G$ has singularity $\frac{1}{5}(2,1,0)$ everywhere
on the curve $E_1\cap E_2$ (in this chart), in agreement
with an earlier calculation.

To analyze the bad point above, change coordinates temporarily
by $t_1=v_1+2$; then the bad point becomes the origin in coordinates
$(w_0,t_1,y_2)$. In these coordinates, we have
$I(w_0)=w_0^2(1-2t_1+O(w_0))$, $I(t_1)=I(v_1)=(-t_1-2t_1^2+O(w_0))$,
and $I(y_2)=w_0y_2(-2+O(w_0))$. Theorem \ref{mup} applies,
with $s=e=w_0$, and we read off that $Y_3/G$ has singularity
$\frac{1}{5}(1,-1,-2)$ at this point. That is terminal, by the Reid-Tai
criterion (Theorem \ref{reidtai}).

The last chart we need to consider in $Y_3$ is the other open set
$\{v_1=1\}$ over the open set above in $Y_2$,
$\{w_1=1\}\subset Y_2$ over $\{y_0=1\}\subset Y_1$.
So $w_0=y_1v_0$, and we have coordinates $(v_0,y_1,y_2)$.
Here $E_0=\{v_0=0\}$, $E_1$ does not appear,
and $E_2=\{y_1=0\}$. Here $Y_3^G$ is defined by:
$I(v_0)=v_0y_1(2-v_0+O(y_1))$, $I(y_1)=y_1^2(1-2v_0+O(y_1))$,
and $I(y_2)=y_1y_2(1+O(y_1))$. As in the previous chart,
we know that $Y_3^G$ is contained in $E_2$ as a set.
By the equations,
$Y_3^G$ is generically the Cartier divisor $E_2$, and the bad locus
(where that fails) is given by $0=y_1$, $0=v_0(2-v_0)$,
and $0=y_2$. Thus there are two bad points in this chart,
$(v_0,y_1,y_2)$ equal to $(2,0,0)\in E_2$ or $(0,0,0)\in E_0\cap E_2$.
The first is the bad point from the previous chart, but the second one
is new. Theorem \ref{mup} works to analyze the second point (the origin),
with $e=s=y_1$. We read off that $Y_3/G$ has singularity
$\frac{1}{5}(2,1,1)$ at this point.

That finishes the analysis of $Y_3$. In particular, as a set, $Y_3^G$
is the union of the divisor $E_2$ and a curve in $E_1$. It is tempting
to blow up the $G$-fixed curve next,
but that leads to a large number of blow-ups over one point of the curve,
where the fixed point scheme is especially complicated. We therefore
define $Y_4$ as the blow-up at that point,
and only later
blow up the whole curve. This leads
more efficiently to toric singularities.

Namely, let $Y_4$ be the blow-up of $Y_3$ at the origin
in the chart $\{r_2=1\}$ in $Y_2$ (unchanged in $Y_3$),
with coordinates $(y_0,r_1,s_2)$. So $Y_4$ has coordinates
$(y_0,r_1,s_2),[q_0,q_1,q_2]$. The exceptional divisor $E_3$
is isomorphic to $\P^2$, and so it is covered by 3 affine charts.
First take $\{q_0=1\}$ in $Y_4$, so $r_1=y_0q_1$
and $s_2=y_0q_2$, and we have coordinates $(y_0,q_1,q_2)$.
Here $E_1=\{q_2=0\}$ and $E_3=\{y_0=0\}$. The fixed point scheme
$Y_4^G$ is defined by: $I(y_0)=y_0^2q_2(1-q_1+O(y_0))$,
$I(q_1)=y_0(-2+q_1q_2+q_1^2q_2+O(y_0))$,
and $I(q_2)=y_0q_2^2(-2+q_1+O(y_0))$. So $Y_4^G$ is generically
the Cartier divisor $E_3$; the $G$-fixed curve in $E_1$ does not appear
in this chart. The bad locus (where the scheme $Y_4^G$ is not
just $E_3$) is given by $0=y_0$, $0=-2+q_1q_2+q_1^2q_2$,
and $0=q_2^2(-2+q_1)$. By the second equation, $q_2\neq 0$,
and so the third equation gives that $q_1=2$. Then the second equation
gives that $0=-2+2q_2-q_2=-2+q_2$, so $q_2=2$. That is,
there is only one bad point in this chart,
$(y_0,q_1,q_2)=(0,2,2)\in E_3$. To analyze that point,
change coordinates temporarily by $s_1=q_1-2$ and $s_2=q_2-2$.
In these coordinates, $I(y_0)=y_0^2(-2-s_1-s_2-s_1s_2+O(y_0))$,
$I(s_1)=I(q_1)=y_0(s_2+2s_1^2+s_1^2s_2+O(y_0))$,
and $I(s_2)=I(q_2)=y_0(-s_1-s_1s_2+s_1s_2^2+O(y_0))$.
By Theorem \ref{mup}, with $e=s=y_0$, $Y_4/G$
has a $\mu_5$-quotient singularity. Explicitly, the linear map
$\varphi$ over $k$ in the theorem is $\varphi(y_0)=-2y_0$,
$\varphi(s_1)=s_2$, and $\varphi(s_2)=-s_1$,
which has eigenvalues $-2,2,-2$. So $Y_4/G$ has singularity
$\frac{1}{5}(-2,2,-2)$ at this point. This is terminal,
by the Reid-Tai criterion.

Next, take the open set $\{q_1=1\}$ in $Y_4$, so $y_0=r_1q_0$
and $s_2=r_1q_2$, and $Y_4$ has coordinates
$(q_0,r_1,q_2)$. Here $E_1=\{q_2=0\}$ and $E_3=\{r_1=0\}$.
The fixed point scheme $Y_4^G$ is defined by:
$I(q_0)=r_1(-q_2-q_0q_2+2q_0^3+O(r_1))$,
$I(r_1)=r_1^2(2q_2-2q_0^2+O(r_1))$, and
$I(q_2)=r_1q_2(2q_2+2q_0^2+O(r_1))$. So $Y_4^G$ is generically
the Cartier divisor $E_3$, together with the $G$-fixed curve
$\{0=q_0=q_2\}$ in $E_1$. The bad locus in $E_3$
is given by $0=r_1$, $0=-q_2-q_0q_2+2q_0^3$,
and $0=q_2(2q_2+2q_0^2)$. This yields two bad points,
$(q_0,r_1,q_2)$ equal to $(-2,0,1)$ or $(0,0,0)$. The first one
is the bad point
from the previous chart, and the second is not surprising,
as it is the intersection point of $E_3$ with the $G$-fixed curve.

Finally, take the open set $\{q_2=1\}$ in $Y_4$,
so $y_0=s_2q_0$ and $r_1=s_2q_1$, and we have coordinates
$(q_0,q_1,s_2)$. Here $E_1$ does not appear, and $E_3=\{s_2=0\}$.
The fixed point scheme $Y_4^G$ is defined by:
$I(q_0)=s_2(2q_0-q_1+O(s_2))$, $I(q_1)=s_2(-2q_1-2q_0^2+O(s_2))$,
and $I(s_2)=s_2^2(-1+O(s_2))$. So $Y_4^G$ is generically $E_3$.
The bad locus in $E_3$ is given by: $0=s_2$,
$0=2q_0-q_1$, and $0=-2q_1-2q_0^2$. This yields two bad points,
$(q_0,q_1,s_2)$ equal to $(-2,1,0)$ (seen in the previous two charts)
or $(0,0,0)$, which is new. Theorem \ref{mup} applies
at this new point, with $e=s=s_2$. The $k$-linear map $\varphi$
is given by $\varphi(q_0)=2q_0-q_1$, $\varphi(q_1)=-2q_1$,
and $\varphi(s_2)=-s_2$. So $\varphi$ has eigenvalues $(2,-2,-1)$,
and hence $Y_4/G$ has singularity
$\frac{1}{5}(2,-2,-1)$ at this point. This is terminal,
by the Reid-Tai criterion.

That completes our description of $Y_4$. Next, let $Y_5$ be the blow-up
of $Y_4$ along the $G$-fixed curve in $E_1$. The exceptional
divisor $E_4$ in $Y_5$ is a $\P^1$-bundle over $\P^1$,
and so it is covered by four affine charts. First work over the open set
$\{z_2=1\}$ in $Y_3$ (unchanged in $Y_4$), with coordinates
$(y_0,z_1,r_2)$; this contains the point where the $G$-fixed curve
in $E_1=\{z_1=0\}$ meets $E_2=\{r_2=0\}$.
Here $Y_5$ is the blow-up along the $G$-fixed curve
$\{0=y_0=z_1\}$, and so $Y_5$ has coordinates $(y_0,z_1,r_2),[n_0,n_1]$.
First take the open set $\{n_0=1\}$ in $Y_5$, so $z_1=y_0n_1$, and we have
coordinates $(y_0,n_1,r_2)$. Here $E_1=\{n_1=0\}$, $E_2=\{r_2=0\}$,
and $E_4=\{y_0=0\}$. The fixed point scheme $Y_5^G$ is defined by:
$I(y_0)=y_0n_1r_2(-1+O(y_0))$, $I(n_1)=n_1r_2(n_1+O(y_0))/(1-n_1r_2+O(y_0))$,
and $I(r_2)=y_0r_2^2(-2n_1r_2+O(y_0))$. So $Y_5^G$, as a set, is the union
of the divisor $E_2$ and the curve $\{0=y_0=n_1\}=E_1\cap E_4$. (In particular,
the fixed point set is still not all of codimension 1.) We have analyzed
the bad locus of $E_2$ in previous steps, but we have to add here
that the bad locus of $E_2$ is disjoint from $E_2\cap E_4$ except
for the point where $E_2$ meets the $G$-fixed curve, by the formula
for $I(n_1)$.
\begin{figure}
\includegraphics[width=\textwidth]{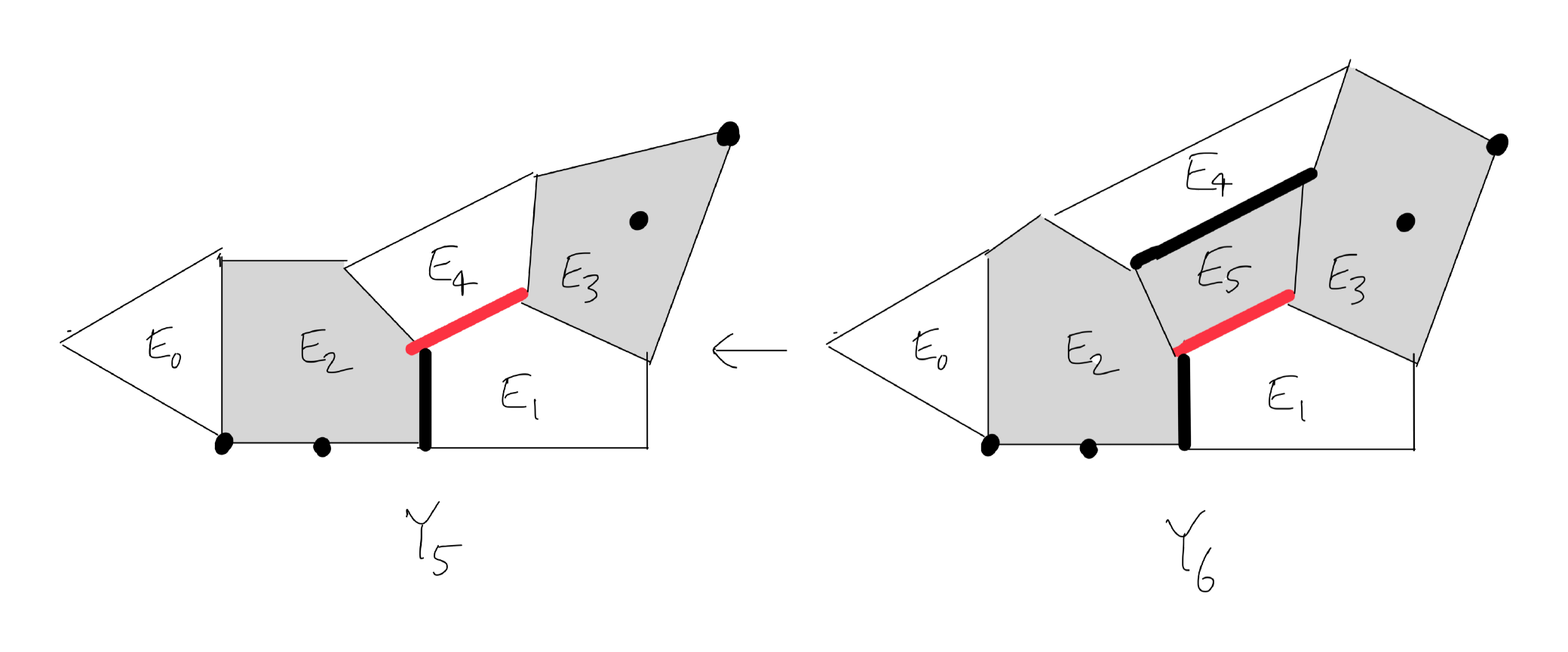}
\caption{For $j=5,6$, $G$ acts freely on $Y_j$ outside the shaded
or marked loci; $Y_j/G$ is regular outside the marked loci;
and $Y_j/G$ has toric singularities outside the red loci.}
\label{F5figure56}
\end{figure}

The other chart is $\{n_1=1\}$ in $Y_5$, so $y_0=z_1n_0$,
and we have coordinates $(n_0,z_1,r_2)$. Here $E_1$ does not appear,
$E_2=\{r_2=0\}$, and $E_4=\{z_1=0\}$. The fixed point scheme $Y_5^G$
is defined by: $I(n_0)=r_2(-1+O(z_1))$, $I(z_1)=z_1^2r_2(-2r_2+O(z_1))$,
and $I(r_2)=z_1r_2^2(-2r_2+O(z_1))$. These equations reduce to $r_2=0$
near $E_4$, and so $Y_5^G$ is the Cartier divisor $E_2$,
in this chart.

To finish our description of $E_4$ in $Y_5$, we work
over the open set where the $G$-fixed curve in $Y_4$ meets $E_3$,
namely $\{q_1=1\}$ in $Y_4$. Here $Y_4$ has coordinates
$(q_0,r_1,q_2)$, $E_1=\{q_2=0\}$, $E_3=\{r_1=0\}$,
and the $G$-fixed curve is $\{0=q_0=q_2\}$ in $E_1$.
So the blow-up $Y_5$ along the $G$-fixed curve has coordinates
$(q_0,r_1,q_2),[u_0,u_2]$. First take $\{u_0=1\}$ in $Y_5$,
so $q_2=q_0u_2$, and we have coordinates $(q_0,r_1,u_2)$.
Here $E_1=\{u_2=0\}$, $E_3=\{r_1=0\}$, and $E_4=\{q_0=0\}$.
The fixed point scheme $Y_5^G$ is defined by:
$I(q_0)=q_0r_1(-u_2+O(q_0))$, $I(r_1)=q_0r_1^2(2u_2+O(q_0))$,
and $I(u_2)=r_1u_2^2(1+O(q_0))/(1-r_1u_2+O(q_0))$. We know the fixed
set outside $E_4$, and so we read off that the fixed set is the divisor $E_3$
together with the $G$-fixed curve $E_1\cap E_4$ found earlier.
We have analyzed the bad set of $E_3$ away from $E_4$ in earlier blow-ups,
and we see from the formula for $I(u_2)$
that the bad set of $E_3$ near $E_3\cap E_4$ is only
the point $E_1\cap E_3\cap E_4$ where the $G$-fixed curve meets $E_3$.

The other chart is $\{u_2=1\}$ in $Y_5$. Here $q_0=q_2u_0$, and so we have
coordinates $(u_0,r_1,q_2)$. Here $E_1$ does not appear,
$E_3=\{r_1=0\}$, and $E_4=\{q_2=0\}$. The fixed point scheme
$Y_5^G$ is defined by: $I(u_0)=r_1(-1+O(q_2))$,
$I(r_1)=r_1^2q_2(2+O(q_2))$, and $I(q_2)=r_1q_2^2(2+O(q_2))$. These
equations reduce to $r_1=0$ near $E_4$, and so the fixed point scheme
$Y_5^G$ is the Cartier divisor $E_3$, in this chart.

That completes our description of $Y_5$. Let $Y_6$ be the blow-up
of $Y_5$ along the $G$-fixed curve $E_1\cap E_4$. The exceptional
divisor $E_5$ in $Y_6$ is a $\P^1$-bundle over $\P^1$, covered by
four affine charts. First take the open set $\{n_0=1\}$ in $Y_5$, 
which contains the point where the $G$-fixed curve meets $E_2$.
Here $Y_5$ has coordinates $(y_0,n_1,r_2)$,
with $E_1=\{n_1=0\}$, $E_2=\{r_2=0\}$,
and $E_4=\{y_0=0\}$. Since $Y_6$ is the blow-up along
the $G$-fixed curve $\{0=y_0=n_1\} =E_1\cap E_4$, $Y_6$
has coordinates $(y_0,n_1,r_2),[m_0,m_1]$. First take $\{m_0=1\}$
in $Y_6$, so $n_1=y_0m_1$, and we have coordinates
$(y_0,m_1,r_2)$. Here $E_1=\{m_1=0\}$, $E_2=\{r_2=0\}$,
$E_4$ does not appear, and $E_5=\{y_0=0\}$. The fixed point scheme
$Y_6^G$ is defined by: $I(y_0)=y_0^2m_1r_2(-1+O(y_0))$,
$I(m_1)=y_0m_1r_2(2m_1+O(y_0))$,
and $I(r_2)=y_0^2r_2^2(2-2m_1r_2+O(y_0))$. We know the fixed point
set away from $E_5$, and so we read off that the fixed point scheme
is generically the Cartier divisor $E_2+E_5$. (Since $E_5$ is fixed
by $G$, we have finally made the fixed point set of codimension 1.)
Let $e=y_0r_2$. The bad locus
(where the scheme $Y_6^G$ is more than the Cartier divisor $E_2+E_5$),
on $E_5$, is given by factoring out $e$ from the equations
and setting $y_0=0$, so we get: $0=y_0$ and $0=2m_1^2$. So,
as a set, the bad locus is the curve $\{0=y_0=m_1\}=E_1\cap E_5$.
Theorem \ref{mup} does not seem to apply to this curve,
and so $Y_6/G$ might not
have toric singularities there; we will have to blow up one more time.

For now, look at the other open set, $\{m_1=1\}$ in $Y_6$.
So $s_0=n_1m_0$, and we have coordinates $(m_0,n_1,r_2)$. Here
$E_1$ does not appear, $E_2=\{r_2=0\}$, $E_4=\{m_0=0\}$,
and $E_5=\{n_1=0\}$. The fixed point scheme $Y_6^G$ is defined by:
$I(m_0)=m_0n_1r_2(-2+O(n_1))$, $I(n_1)=n_1^2r_2(1+O(n_1))$,
and $I(r_2)=m_0n_1^2r_2^2(2m_0-2r_2+O(n_1))$. Since we know the fixed point
set outside $E_5$, we read off that the fixed point scheme
is generically the Cartier divisor $E_2+E_5$. Let $e=n_1r_2$.
The bad locus (where the scheme $Y_6^G$ is more than the Cartier
divisor $E_2+E_5$) is the curve $\{0=m_0=n_1\}=E_4\cap E_5$.
Fortunately, Theorem \ref{mup} applies, with $s=m_0$. We read
off that $Y_6/G$ has singularity $\frac{1}{5}(-2,1,0)$ along the whole
curve $E_4\cap E_5$, in this chart.

To finish describing $E_5\subset Y_6$, we have to work over the open
set $\{u_0=1\}$ in $Y_5$, where the $G$-fixed curve $E_1\cap E_4$
in $Y_5$ meets $E_3$. Here $Y_5$ has coordinates
$(q_0,r_1,u_2)$, with $E_1=\{u_2=0\}$, $E_3=\{r_1=0\}$, and $E_4=\{q_0=0\}$.
Then $Y_6$ is the blow-up along the $G$-fixed curve
$\{0=q_0=u_2\}=E_1\cap E_4$, so $Y_6$ has coordinates
$(q_0,r_1,u_2),[t_0,t_2]$. First take $\{t_0=1\}$ in $Y_6$,
so $u_2=q_0u_2$ and we have coordinates $(q_0,r_1,t_2)$.
Here $E_1=\{t_2=0\}$, $E_3=\{r_1=0\}$,
$E_4$ does not appear, and $E_5=\{q_0=0\}$. The fixed point scheme
is defined by: $I(q_0)=q_0^2r_1(-t_2+O(q_0))$,
$I(r_1)=q_0^2r_1^2(-2+2t_2+O(q_0))$, and $I(t_2)=q_0r_1t_2(2t_2+O(q_0))$.
So the fixed point scheme is generically $E_3+E_5$. Let $e=q_0r_1$.
The bad locus (where the scheme $Y_6^G$ is more than the Cartier
divisor $E_3+E_5$), in $E_5$, is given by $0=q_0$
and $0=2t_2^2$, so (as a set) it is the curve $\{0=q_0=t_2\}
=E_1\cap E_5$, which we met in an earlier chart.

The other chart is $\{t_2=1\}$ in $Y_6$, so $q_0=u_2t_0$,
and we have coordinates $(t_0,r_1,u_2)$. Here $E_1$ does not appear,
$E_3=\{r_1=0\}$, $E_4=\{t_0=0\}$, and $E_5=\{u_2=0\}$.
The fixed point scheme $Y_6^G$ is defined by: $I(t_0)=t_0r_1u_2(-2+O(u_2))$,
$I(r_1)=t_0r_1^2u_2^2(2-2t_0+O(u_2))$, and
$I(u_2)=r_1u_2^2(1+O(u_2))$. So $Y_6^G$ is generically $E_3+E_5$.
Let $e=r_1u_2$.
The bad locus (where the scheme $Y_6^G$ is more than the Cartier
divisor $E_3+E_5$), in $E_5$, is the curve $\{0=t_0=u_2\}
=E_4\cap E_5$, which we met in an earlier chart. Theorem \ref{mup}
applies, with $s=t_0$. Namely, $Y_6/G$ has singularity
$\frac{1}{5}(-2,0,1)$ everywhere on the curve $E_4\cap E_5$ in this chart
(including the origin, which did not appear in the earlier chart).

That completes our description of $Y_6$. In particular, the $G$-fixed locus
has codimension 1 in $Y_6$, and $Y_6/G$ has toric singularities
outside the image of the curve $E_1\cap E_5$. Let $Y_7$ be the blow-up
of $Y_6$ along that curve. The exceptional divisor $E_6$ in $Y_7$
is a $\P^1$-bundle over $\P^1$, and so we will cover $E_6$
with four affine charts. First, work over the open set $\{m_0=1\}$
in $Y_6$, where the bad curve $E_1\cap E_5$ meets $E_2$.
Here $Y_6$ has coordinates
$(y_0,m_1,r_2)$, with $E_1=\{m_1=0\}$, $E_2=\{r_2=0\}$,
and $E_5=\{y_0=0\}$. Since $Y_7$ is the blow-up along the curve
$\{0=y_0=m_1\}=E_1\cap E_5$, $Y_7$ has coordinates
$(y_0,m_1,r_2),[j_0,j_1]$. 
\begin{figure}
\includegraphics[width=0.5\textwidth]{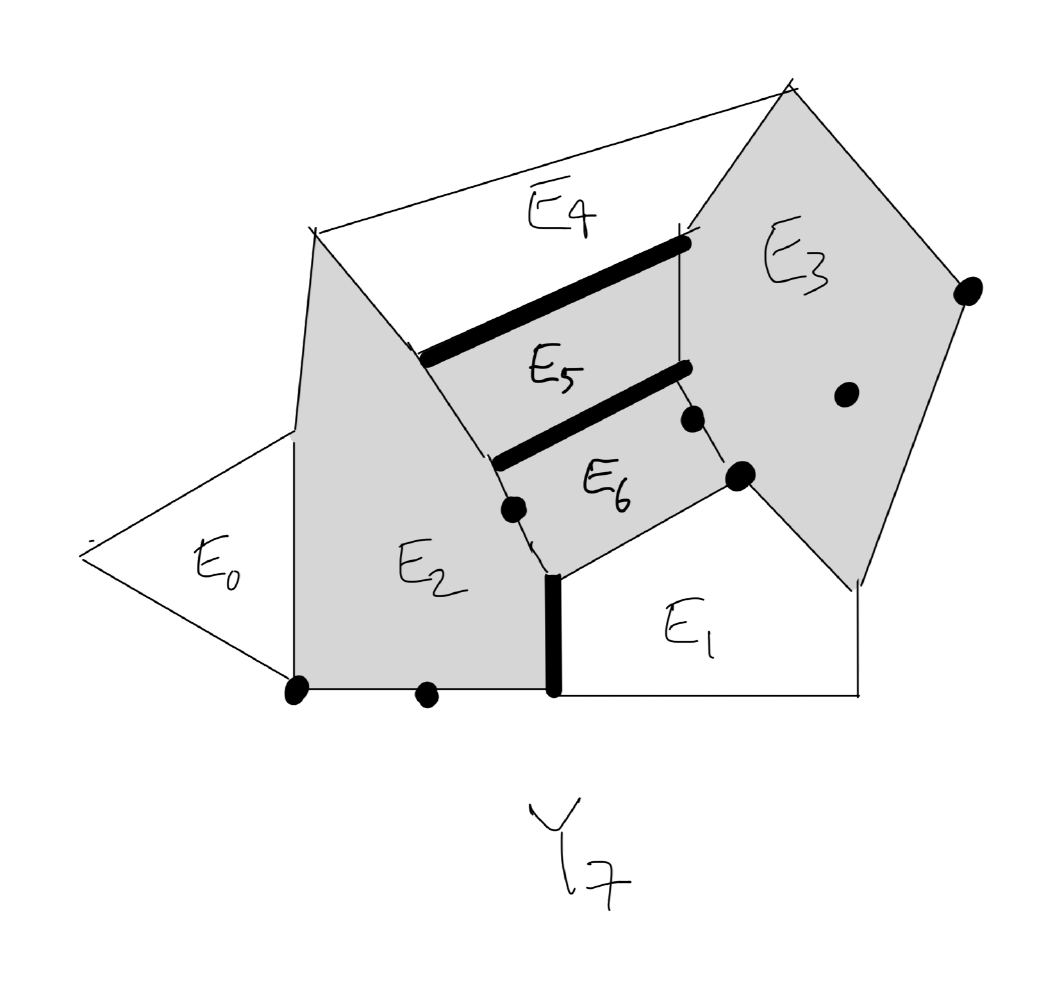}
\caption{$G$ acts freely on $Y_7$ outside the shaded
or marked loci, and $Y_7/G$ is regular outside the marked loci.
Here $Y_7/G$ has toric singularities.}
\label{F5figure7}
\end{figure}

First take $\{j_0=1\}$ in $Y_7$,
so $m_1=y_0j_1$, and we have coordinates $(y_0,j_1,r_2)$. Here
$E_1=\{j_1=0\}$, $E_2=\{r_2=0\}$, $E_5$ does not appear,
and $E_6=\{y_0=0\}$. The fixed point scheme $Y_7^G$
is defined by: $I(y_0)=y_0^3j_1r_2(-1+O(y_0))$,
$I(j_1)=y_0^2j_1r_2(1-2j_1+O(y_0))$,
and $I(r_2)=y_0^2r_2^2(2+O(y_0))$. So $Y_7^G$ is generically
the Cartier divisor $E_2+2E_6$. Let $e=y_0^2r_2$. The bad locus
(where the scheme $Y_7^G$ is more than the Cartier divisor $E_2+2E_6$),
in $E_6$, is given by $0=y_0$, $0=j_1(1-2j_1)$, and $0=r_2$,
so it consists of the two points $(y_0,j_1,r_2)$ equal to $(0,0,0)
=E_1\cap E_2\cap E_6$
or $(0,-2,0)\in E_2\cap E_6$. At the first point, Theorem \ref{mup} applies,
with $s=r_2$. We read off that $Y_7/G$ has singularity
$(0,1,2)$ everywhere on the curve $E_1\cap E_2$ (including the origin,
which did not appear when we saw $E_1\cap E_2$ in an earlier chart).
To analyze the second point, change coordinates temporarily
by $s_1=j_1+2$; then that point becomes the origin
in coordinates $(y_0,s_1,r_2)$. We have $I(y_0)=y_0^3r_2(2-s_1+O(y_0))$,
$I(s_1)=I(j_1)=y_0^2r_2(-s_1-2s_1^2+O(y_0))$,
and $I(r_2)=y_0^2r_2^2(2+O(y_0))$. Theorem \ref{mup} applies,
with $s=r_2$. We read off that $Y_7/G$ has singularity
$\frac{1}{5}(2,-1,2)$ at this point.

The other open set is $\{j_1=1\}$ in $Y_7$, so $y_0=m_1j_0$,
and we have coordinates $(j_0,m_1,r_2)$. Here $E_1$ does not appear,
$E_2=\{r_2=0\}$, $E_5=\{j_0=0\}$, and $E_6=\{m_1=0\}$.
The fixed point scheme $Y_7^G$ is defined by:
$I(j_0)=j_0^2m_1^2r_2(2-j_0+O(m_1))$,
$I(m_1)=j_0m_1^3r_2(2+j_0+O(m_1))$, and
$I(r_2)=j_0^2m_1^2r_2^2(2+O(m_1))$. So $Y_7^G$
is generically $E_2+E_5+2E_6$. Let $e=j_0m_1^2r_2$.
The bad locus (where the scheme $Y_7^G$ is more than the Cartier
divisor $E_2+E_5+2E_6$), on $E_6$, is given by:
$0=m_1$, $0=j_0(2-j_0)$, and $0=j_0r_2$. So the bad locus
is the union of the curve $\{0=j_0=m_1\}=E_5\cap E_6$
and the point $(j_0,m_1,r_2)=(2,0,0)$ in $E_2\cap E_6$.
That point is the one we analyzed in the previous chart.
For the curve, Theorem \ref{mup} applies, using
$s=j_0$. We read off that $Y_7/G$ has singularity
$\frac{1}{5}(2,2,0)$ everywhere on the curve $E_5\cap E_6$,
in this chart.

Last, work over the open set
$\{t_0=1\}$ in $Y_6$,
where the bad curve $E_1\cap E_5$ meets $E_3$. 
Here $Y_6$ has coordinates $(q_0,r_1,t_2)$,
with $E_1=\{t_2=0\}$, $E_3=\{r_1=0\}$,
and $E_5=\{q_0=0\}$. We obtain $Y_7$ by blowing up along the curve
$\{0=q_0=t_2\}=E_1\cap E_5$, so $Y_7$ has coordinates
$(q_0,r_1,t_2),[x_0,x_2]$. First take $\{x_0=1\}$,
so $t_2=q_0x_2$, and we have coordinates $(q_0,r_1,x_2)$.
Here $E_1 = \{x_2=0\}$, $E_3 = \{r_1=0\}$,
$E_5$ does not appear, and $E_6 = \{q_0=0\}$. The fixed point
scheme $Y_7^G$ is defined by:
$I(q_0)=q_0^3r_1(2-x_2+O(q_0))$, $I(r_1)=q_0^2r_1^2(-2+O(q_0))$,
and $I(x_2)=q_0^2r_1x_2(1-2x_2+O(q_0))$. So $Y_7^G$ is generically
$E_3+2E_6$. Let $e=q_0^2r_1$.
The bad locus (where the scheme $Y_7^G$ is more than
the Cartier divisor $E_3+2E_6$), in $E_6$, is given by:
$0=q_0$, $0=r_1$, and $0=x_2(1-2x_2)$, so it consists
of the two points $(q_0,r_1,x_2)$ equal to $(0,0,0)=E_1\cap E_3\cap E_6$
or $(0,0,-2)$ in $E_3\cap E_6$. Since $I(r_1)=er_1(\text{unit})$,
Theorem \ref{mup} applies at both points. At the origin,
the theorem gives that $Y_7/G$ has singularity
$\frac{1}{5}(2,-2,1)$, which is terminal.
For the other point, change coordinates
temporarily by $y_2=x_2+2$, so that the point becomes the origin
in coordinates $(q_0,r_1,y_2)$. We have $I(q_0)
=q_0^3r_1(-1-y_2+O(q_0))$, $I(r_1)=q_0^2r_1^2(-2+O(q_0))$,
and $I(y_2)=I(x_2)=q_0^2r_1(-y_2-2y_2^2+O(q_0))$.
So Theorem \ref{mup} gives that $Y_7/G$ has singularity
$\frac{1}{5}(-1,-2,-1)$ at this point.

The other chart is $\{x_2=1\}$ in $Y_7$, so $q_0=t_2x_0$,
and we have coordinates $(x_0,r_1,t_2)$. Here $E_1$ does not appear,
$E_3=\{r_1=0\}$, $E_5=\{x_0=0\}$, and $E_6=\{t_2=0\}$. The fixed
point scheme $Y_7^G$ is defined by:
$I(x_0)=x_0^2r_1t_2^2(2-x_0+O(t_2))$,
$I(r_1)=x_0^2r_1^2t_2^2(-2+O(t_2))$,
and $I(t_2)=x_0r_1t_2^3(2-2x_0+O(t_2))$. So $Y_7^G$
is generically $E_3+E_5+2E_6$. Let $e=x_0r_1t_2^2$.
The bad locus (where $Y_7^G$
is more than the Cartier divisor $E_3+E_5+2E_6$),
in $E_6$, is given by: $0=t_2$, $0=x_0(2-x_0)$, and $0=x_0r_1$,
which is the union of the curve $\{0=x_0=t_2\}=E_5\cap E_6$
and the point $(x_0,r_1,t_2)=(2,0,0)$ in $E_3\cap E_6$.
We analyzed that point in the previous chart. Theorem \ref{mup}
applies to the curve, using $s=x_0$. We read off that $Y_7/G$
has singularity $\frac{1}{5}(2,0,2)$ everywhere on the curve
$E_5\cap E_6$ (including the origin, which did not appear
in the earlier chart where we met this curve).

That completes our analysis of $Y_7$; we have shown
that $Y_7/G$ has toric singularities. It will now
be straightforward to show that $Y_0/G$ is terminal.

First, we can compute the canonical class of $Y_7$,
since $Y_7$ is obtained from $Y_0$ by a sequence
of blow-ups along points and smooth curves. Write $E_j$
for the strict transform of the exceptional divisor in $Y_{j+1}$
to any higher model. Write $\pi_{ij}$
for the morphism $Y_i\to Y_j$ (with $i>j$), and also for the
resulting morphism $Y_i/G\to Y_j/G$. First, $K_{Y_1}=\pi_{10}^*K_{Y_0}
+2E_0$, since $Y_1\to Y_0$ is the blow-up of a smooth 3-fold
at a point. Next, $K_{Y_2}=\pi_{21}^*K_{Y_1}+E_1$, since $Y_2\to Y_1$
is the blow-up along a smooth curve, and we have $\pi_{21}^*E_0=E_0+E_1$
because the curve being blown up is contained in $E_0$.
Likewise, we have:
\begin{align*}
K_{Y_3}=\pi_{32}^*K_{Y_2}+E_2,\; \pi_{32}^*E_0=E_0+E_2,\; &
\pi_{32}^*E_1=E_1+E_2,\\
K_{Y_4}=\pi_{43}^*K_{Y_3}+2E_3,\; \pi_{43}^*E_0=E_0,\; &
\pi_{43}^*E_1=E_1+E_3,\; \pi_{43}^*E_2=E_2,\\
K_{Y_5}=\pi_{54}^*K_{Y_4}+E_4,\; \pi_{54}^*E_0=E_0,&
\pi_{54}^*E_1=E_1+E_4,\; \pi_{54}^*E_2=E_2,\;
\pi_{54}^*E_3=E_3,\\
K_{Y_6}=\pi_{65}^*K_{Y_5}+E_5,\; \pi_{65}^*E_j=E_j&
\text{ for }j\in\{0,\ldots,4\}-\{1,4\},\\
\; \pi_{65}^*E_1=E_1+E_5,\; & \pi_{65}^*E_4=E_4+E_5,\\
K_{Y_7}=\pi_{76}^*K_{Y_6}+E_6, \pi_{76}^*E_j=E_j&
\text{ for }j\in\{0,\ldots,5\}-\{1,5\},\\
\; \pi_{76}^*E_1=E_1+E_6,\; & \pi_{76}^*E_5=E_5+E_6.
\end{align*}
Combining these equations gives that
$$K_{Y_7}=\pi_{70}^*K_{Y_0}+2E_0+3E_1+6E_2+5E_3+4E_4
+8E_5+12E_6.$$

Write $f$ for any of the quotient maps $Y_j\to Y_j/G$.
First, $K_{Y_0/G}$ is $\Q$-Cartier since $Y_0$ is smooth,
and $K_{Y_0}=f^*K_{Y_0/G}$ because $f\colon Y_0\to Y_0/G$
is \'etale in codimension 1. Next,
we computed that the fixed point scheme $Y_7^G$ is the Cartier
divisor $E_2+E_3+E_5+2E_6$ outside a codimension-2 subset of $Y_7$.
By section \ref{ramsection}, it follows that
\begin{align*}
K_{Y_7}&=f^*K_{Y_7/G}+(p-1)(E_2+E_3+E_5+2E_6)\\
&=f^*K_{Y_7/G}+4E_2+4E_3+4E_5+8E_6.
\end{align*}
For each $j\in \{0,\ldots,7\}$, let $F_j$ be the image of $E_j$
in $Y_7/G$, as an irreducible divisor. For $j\in \{0,1,4\}$,
$G$ acts nontrivially on $E_j$ (so $f$ is unramified
along $E_j$), and hence $E_j=f^*F_j$. For the other $j$'s,
in $\{2,3,5,6\}$, section \ref{ramsection} and our calculations
imply that $f$ is fiercely ramified along $E_j$, and so again
we have $E_j=f^*F_j$. (For example, for $E_2$, use the first
chart where $E_2$ appeared, $\{z_1=1\}$ in $Y_3$. There
$E_2$ is the divisor $\{y_1=0\}$, and $Y_3^G$ has multiplicity
1 along $E_2$, but $I(y_1)=y_1^2z_2(-2y_0^2+O(y_1))$ vanishes
to order $2>1$ along $E_2$; so section \ref{ramsection} gives that
$f$ is fiercely ramified along $E_2$.)

We can combine these results to compute the discrepancies
of the morphism $Y_7/G\to Y_0/G$. Namely, we have
\begin{align*}
f^*(K_{Y_7/G}-\pi_{70}^*K_{Y_0/G})&=
f^*K_{Y_7/G}-\pi_{70}^*f^*K_{Y_0/G}\\
&=K_{Y_7}-4E_2-4E_3-4E_5-8E_6-\pi_{70}^*K_{Y_0}\\
&=2E_0+3E_1+2E_2+E_3+4E_4+4E_5+4E_6\\
&=f^*(2F_0+3F_1+2F_2+F_3+4F_4+4F_5+4F_6).
\end{align*}
Therefore, $K_{Y_7/G}=\pi_{70}^*(K_{Y_0/G})+2F_0+3F_1+2F_2+F_3+4F_4
+4F_5+4F_6$. In particular, these coefficients are all positive,
which is part of showing that $Y_0/G$ is terminal. (That would be all
we need if $Y_7/G$ were smooth.)

To show that $Y_0/G$ is terminal, it now suffices
to show that the pair $(Y_7/G,D)$ is terminal, where
$D:=-2F_0-3F_1-2F_2-F_3-4F_4-4F_5-4F_6$.
Because the coefficients of $D$ are negative (which works
to our advantage), this is clear
at points where $Y_7/G$ is terminal. There are 6 subvarieties (points
or curves) where $Y_7/G$ is not terminal, as we now address.

(1) Along the curve $E_1\cap E_2$, $Y_7/G$ has singularity
$\frac{1}{5}(0,1,2)$, with $F_1$ a toric divisor of weight 1
and $F_2$ a toric divisor of weight 2, using Theorem \ref{mup-divisor}.
To show that $(Y_7/G,D)=(Y_7/G,-3F_1-2F_2-\cdots)$
is terminal along this curve, we need to show that
$4(i \bmod 5)+3(2i \bmod 5)>5$ for $i=1,\ldots 4$,
by Theorem \ref{reidtaipair}. This is clear, since the left side
is $\geq 4+3=7>5$.

(2) At a point in $E_0\cap E_2$, $Y_7/G$ has singularity
$\frac{1}{5}(2,1,1)$, with $F_0$ of weight 2
and $F_2$ of weight 1. To show that $(Y_7/G,D)=(Y_7/G,
-2F_0-2F_2-\cdots)$ is terminal, we need that
$3(2i \bmod 5)+3(i \bmod 5)+(i\bmod 5)>5$ for $i=1,\ldots 4$.
Indeed, the left side is $\geq 3+3+1=7>5$.

(3) Along the curve $E_4\cap E_5$, $Y_7/G$ has singularity
$\frac{1}{5}(-2,1,0)$, with $E_4$ of weight $-2$
and $E_5$ of weight 1. To show that $(Y_7/G,D)
=(Y_7/G,-4F_4-4F_5-\cdots)$ is terminal, we need that
$5(-2i\bmod 5)+5(i\bmod 5)>5$ for $i=1,\ldots 4$.
Indeed, the left side is $\geq 5+5=10>5$.

(4) At a point in $E_2\cap E_6$, $Y_7/G$ has singularity
$\frac{1}{5}(2,-1,2)$, with $E_2$ and $E_6$ both of weight 2.
To show that $(Y_7/G,D)
=(Y_7/G,-2F_2-4F_6-\cdots)$ is terminal, we need that
$3(2i\bmod 5)+5(2i\bmod 5)+(-i\bmod 5)>5$ for $i=1,\ldots 4$.
Indeed, the left side is $\geq 3+5+1=9>5$.

(5) Along the curve $E_5\cap E_6$, $Y_7/G$ has singularity
$\frac{1}{5}(2,2,0)$, with $E_5$ and $E_6$ both of weight 2.
To show that $(Y_7/G,D)
=(Y_7/G,-4F_5-4F_6-\cdots)$ is terminal, we need that
$5(2i\bmod 5)+5(2i\bmod 5)>5$ for $i=1,\ldots 4$.
Indeed, the left side is $\geq 5+5=10>5$.

(6) At a point in $E_3\cap E_6$, $Y_7/G$ has singularity
$\frac{1}{5}(-1,-2,-1)$, with $E_3$ of weight $-2$
and $E_6$ of weight $-1$.
To show that $(Y_7/G,D)
=(Y_7/G,-F_3-4F_6-\cdots)$ is terminal, we need that
$2(-2i\bmod 5)+5(-i\bmod 5)+(-i\bmod 5)>5$ for $i=1,\ldots 4$.
Indeed, the left side is $\geq 2+5+1=8>5$.

That completes the proof that $Y_0/G$ is terminal.
Theorem \ref{char5} is proved.
\end{proof}

\begin{remark}
The divisor class $\pi^*K_{Y_0/G}=K_{Y_7/G}+D$
happens to be Cartier on the loci (1)--(6),
above. However, it is not Cartier at the terminal singularity
$\frac{1}{5}(2,-2,-1)$ in $E_3$; one can compute that
some discrepancies at divisors
over that point are not integers. As a result, $K_{Y_0/G}$ is not Cartier
(as one can also check directly).
I expect that there is also a 3-fold $X$
over $\F_5$ that is terminal and non-Cohen-Macaulay
with $K_X$ Cartier. Namely, 
one should replace $\P^1$
in Theorem \ref{F5thm} by the Harbater-Katz-Gabber curve
of Remark \ref{F3cartier}, now with $p=5$.
\end{remark}

\section{The example over the 5-adic integers}
\label{Z5}

\begin{theorem}
\label{Z5thm}
Let the group $G=\Z/5$
act on the quintic del Pezzo surface $S_5$ over $\Z_5$ by an embedding
of $G$ into the symmetric group $\Sigma_5=\Aut(S_5)$.
Let $R=\Z_5[e]/(e^5-5e^4+25e^2-25e+5)$, which is the ring of integers
in a Galois extension of $\Q_5$
with group $G=\Z/5$. Let $G$
act on the scheme $(S_5)_R$ by the diagonal action on $S_5$ and on $R$.
Then the scheme $(S_5)_R/G$
is terminal, not Cohen-Macaulay, of dimension 3,
and flat over $\Z_5$.
\end{theorem}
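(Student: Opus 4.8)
The plan is to treat this as the mixed-characteristic $(0,5)$ companion of Theorem \ref{F5thm}, following the template by which Theorem \ref{Z3thm} shadows Theorem \ref{F3thm}: the role played in characteristic $5$ by the $\P^1$ with its additive $\Z/5$-action is now played by $\Spec R$ with its Galois action. First I would record the structural facts. Since $R$ is a totally ramified $\Z/5$-extension of $\Z_5$, the group $G$ acts freely on $\Spec R$ away from the closed point, so the only fixed point of the diagonal action on $Y_0=(S_5)_R$ is $P=(\text{the unique fixed point of }G\text{ on }(S_5)_{\F_5})$ sitting over the closed point of $\Spec R$, a closed point with residue field $\F_5$. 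Hence $X=(S_5)_R/G$ is normal of dimension $3$, regular away from $P$, and flat over $\Z_5$ (its structure sheaf is, locally, a ring of invariants inside the torsion-free $O_{Y_0}$); and $5K_X$ is Cartier.

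For non-Cohen-Macaulayness I would reuse Fogarty's method as in the proofs of Theorems \ref{Z2thm} and \ref{Z3thm}. Because $P$ is an isolated fixed point of $G=\Z/5$ on the regular $3$-fold $Y_0$, non-Cohen-Macaulayness of $X$ at $P$ reduces to the nonvanishing of $H^1(G,O_{Y_0,P})$, for which it suffices to exhibit a trace-zero element of $O(Y_0)$ whose restriction to the residue field $\F_5$ at $P$ is nonzero. The defining polynomial $e^5-5e^4+25e^2-25e+5$ has $e^4$-coefficient $-5$, so $\tr_G(e)=5=\tr_G(1)$ and $1-e$ has trace zero; since $e\equiv 0$ at $P$, the element $1-e$ restricts to $1\neq 0$ in $\F_5$. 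Thus $[1-e]$ maps nontrivially to $H^1(G,\F_5)\cong\F_5$, hence is nonzero, and $X$ is not Cohen-Macaulay.

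The substance is terminality, and the claim is that the seven $G$-equivariant blow-ups $Y_7\to\cdots\to Y_0$ of section \ref{char5}, depicted in Figures \ref{F5figure012}, \ref{F5figure34}, \ref{F5figure56}, and \ref{F5figure7}, have an exact mixed-characteristic analog: the blow-up centers and fixed loci are combinatorially identical, and in every coordinate chart the values of $I=\sigma-1$ on the coordinates agree with those of section \ref{char5} to the stated accuracy, the extra arithmetic terms from $R$ (and the fact that $5\neq 0$) being absorbed into the error terms already present. Granting this, Theorem \ref{mup} produces the identical list of $\mu_5$-quotient singularities on $Y_7/G$; section \ref{ramsection} and Corollary \ref{fierce} give the identical fixed divisor $E_2+E_3+E_5+2E_6$ with fierce ramification; and the discrepancy formula $K_{Y_7/G}=\pi_{70}^*K_{Y_0/G}+2F_0+3F_1+2F_2+F_3+4F_4+4F_5+4F_6$ holds. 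Terminality of $Y_0/G$ then follows exactly as in section \ref{char5}, by checking that the pair $(Y_7/G,\,-2F_0-3F_1-2F_2-F_3-4F_4-4F_5-4F_6)$ is terminal along the six non-terminal loci of $Y_7/G$ via the Reid--Tai pair criterion (Theorem \ref{reidtaipair}).

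To invoke Theorem \ref{mup} at each singular point, its hypothesis $5\in\delta^{4}\m$ must be verified, where $\delta$ denotes the local function serving as ``$e$'' in that theorem; this is automatic in characteristic $5$ but genuine here, and is the reason for the Eisenstein choice of $R$. Since the defining polynomial is Eisenstein at $5$, the uniformizer $e$ generates $\m_R$ and $5=e^5(\text{unit})$ on $Y_7$. Every blow-up center lies in the $G$-fixed locus, hence in the central fiber $\{e=0\}$, so each exceptional divisor $E_j$ lies in $\{e=0\}$ with $\ord_{E_j}(e)\geq 1$; these orders add under successive blow-ups (for instance $\ord_{E_2}(e)=2$ and $\ord_{E_6}(e)\geq 3$). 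As the monomial $\delta$ at each bad point is a product of such $E_j$-equations with exponents at most $2$ — the exponent $2$ always falling on the last divisor $E_6$ — comparing orders against $5=e^5(\text{unit})$ yields $5\in\delta^{4}\m$ at every singular point. The hard part will be the bookkeeping behind the parallel with section \ref{char5}: one must confirm, chart by chart across all seven blow-ups, that the mixed-characteristic corrections to $I$ are genuinely of higher order — so that Theorem \ref{mup} and the ramification computation return the same answers — together with the order estimates just described. As with Theorem \ref{Z3thm} this mirrors the equal-characteristic calculation, but the seven-fold tower for $p=5$ makes the verification considerably more laborious.
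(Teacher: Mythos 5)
Your proposal is correct and follows essentially the same route as the paper: Fogarty's method with the trace-zero element $1-e$ for non-Cohen-Macaulayness, the seven-fold tower of $G$-equivariant blow-ups mirroring the characteristic-$5$ case with the mixed-characteristic corrections absorbed into the error terms, the Eisenstein relation $5=e^5(\text{unit})$ to verify the hypothesis $p\in e^{p-1}\m$ of Theorem \ref{mup}, and the identical discrepancy and Reid--Tai pair computations at the end. The paper's treatment of the $p\in e^{p-1}\m$ check is marginally simpler than your order-counting (it just observes that $e_2$ is a multiple of the local function $\delta$ in each chart, so $5\in(\delta^5)\subset \delta^4\m$), and of course the paper actually carries out the chart-by-chart verification that you correctly flag as the remaining labor.
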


We define the quintic del Pezzo surface $S_5$ (over any commutative ring)
as the moduli space $\overline{M_{0,5}}$ of 5-pointed stable curves
of genus 0. That makes it clear that the symmetric group $\Sigma_5$
acts on $S_5$.

This example behaves much like the example over $\F_5$,
Theorem \ref{F5thm}. {\it In particular, the figures
in section \ref{char5}
accurately depict the blow-ups we make in mixed characteristic $(0,5)$,
just as in characteristic 5.}
We can view $R$ as the subring
of the cyclotomic ring $\Z_5[\zeta_{25}]$
fixed by the automorphism $\zeta_{25}\mapsto \zeta_{25}^7$ of order 4,
with $e=1+\zeta_{25}+\zeta_{25}^{-1}+\zeta_{25}^7+\zeta_{25}^{-7}$.
Informally, $R$ is the simplest ramified $\Z/5$-extension of $\Z_5$.
More broadly,
this action of $G$ on $(S_2)_R$ was chosen as possibly the simplest
action of $\Z/5$ on a 5-fold in mixed characteristic $(0,5)$
with an isolated fixed point.
The simplicity helps to ensure that the quotient scheme is terminal.

\begin{proof}
We work throughout over $\Z_5$. Write $G=\Z/5=\langle \sigma:
\sigma^5=1\rangle$, with $\tau:=\sigma^{-1}$.
By de Fernex \cite{dF},
the action of $G$ on $S_5$
is conjugate to the birational action of $G$ on $\P^2$ by
$$\tau([x,y,z])=[x(z-y),z(x-y),xz].$$
The fixed point over $\F_5$ is $[-2,1,-1]$. Let us change variables
over $\Z_5$ to move that point to $[0,0,1]$ (although it is only
fixed over $\F_5$). Namely, let $v_0=x+2y$, 
$v_1=z-x-y$, and $v_2=y$. In these coordinates, the action of $G$ becomes
\begin{multline*}
\tau[v_0,v_1,v_2]=[3v_0^2+3v_0v_1-12v_0v_2-8v_1v_2+10v_2^2,\\
-v_0^2-v_0v_1+5v_0v_2+3v_1v_2-5v_2^2,(v_0+v_1-v_2)(v_0-3v_2)].
\end{multline*}
Therefore, in affine coordinates $(s_0,s_1):=(v_0/v_2,v_1/v_2)$,
$G$ acts by
$$\tau(s_0,s_1)=\bigg( \frac{10-12s_0-8s_1+3s_0^2+3s_0s_1}{(3-s_0)(1-s_0-s_1)},\;
\frac{-5+5s_0+3s_1-s_0^2-s_0s_1}{(3-s_0)(1-s_0-s_1)}\bigg).$$
This reduces modulo 5 to the formula for the action of $G$
on $S_5$ over $\F_5$ in section \ref{char5}.

Let $Y_0=(S_5)_R$, with the diagonal action of $G$ on $S_5$ and on $R$.
Write $e_2$ for the generator $e$ of $R$, to fit with our numbering
of coordinates on $Y_0$; so we have
$$0=e_2^5-5e_2^4+25e_2^2-25e_2+5.$$
Then $G$ acts on an affine neighborhood $U$ of the origin by:
\begin{multline*}
\tau(s_0,s_1,e_2)=\bigg( \frac{10-12s_0-8s_1+3s_0^2+3s_0s_1}{(3-s_0)(1-s_0-s_1)},\;
\frac{-5+5s_0+3s_1-s_0^2-s_0s_1}{(3-s_0)(1-s_0-s_1)},\\
\frac{1}{7}(20-53e_2+8e_2^2+9e_2^3-2e_2^4)\bigg).
\end{multline*}
(The last expression is a generator of the Galois group of $R$ over $\Z_5$,
as one can check via Magma. The denominator 7 occurs because the
ring of integers of $\Q(\zeta_{25})^{\Z/4}$ is not monogenic, hence not
generated over $\Z$ by $e_2$.
This causes no difficulties, because we are working
over $\Z_5$.) Note that $U$ is written with
three variables over $\Z_5$, but this is a regular scheme of dimension 3
because of the equation satisfied by $e_2$:
$$0=e_2^5-5e_2^4+25e_2^2-25e_2+5.$$

We will apply Theorem \ref{mup} repeatedly to recognize
the singularities of $Y_j/G$, for various blow-ups $Y_j$
of $Y_0$. We remark now
that the assumption in Theorem \ref{mup}
that $p\in e^{p-1}\m$ will be valid in each case,
that is, that $5\in e^4\m$. Indeed, we have
$5=e_2^5(\text{unit})$ on $Y_0$, hence on each blow-up $Y_j$,
and $e_2$ is a multiple
of the function $e$ defining the Weil divisor $[Y_j^G]$
in each case, that being the function $e$ we will use for Theorem \ref{mup}.
So 5 is in the ideal $(e^5)$, hence in $e^4\m$ at each of the bad
points.

Let $X=Y_0/G$. Since $G$ acts freely on $\Spec R$ outside
its closed point, the only fixed point of $G$ on $Y_0$
is the closed point $P\cong \Spec \F_5$
given by $(s_0,s_1,e_2)=(0,0,0)$.
So $X$ is normal of dimension 3,
and $X$ is regular outside the image of $P$, which we also call $P$.
Also, $5K_X$ is Cartier.

It is not automatic from Fogarty's results \cite{Fogartydepth},
but we can use his methods to show that
$X$ is not Cohen-Macaulay at $P$. As in the proof of Theorem
\ref{Z2}, using that $G$ has an isolated fixed point on the 3-fold $Y_0$,
it suffices to show that $H^1(G,O(Y_0))$ is not zero.
This cohomology group is $\ker(\tr)/\Im(1-\sigma)$
on $O(Y_0)$, where the trace is $1+\sigma+\cdots+\sigma^4$.
The equation $0=e_2^5-5e_2^4+25e_2^2-25e_2+5$ (specifically,
the coefficient of $e_2^4$) implies that $e_2$ has trace 5.
So $\tr(1-e_2)=0$, and hence
$1-e_2$ defines an element of $H^1(G,O(Y_0))$. Note that $1-e_2$ restricts
to $1\in O(P)=\F_5$ on the fixed point $P$. Therefore, $1-e_2$ has nonzero
image under the restriction map $H^1(G,O(Y_0))\to H^1(G,O(P)\cong \F_5$.
So $H^1(G,O(Y_0))$ is not zero, and hence
$Y_0/G$ is not Cohen-Macaulay.

It remains to show that $Y_0/G$ is terminal. 
This example is complicated, and it may be impossible
to resolve the singularities
of $X$ by performing $G$-equivariant blow-ups of $Y_0$. Fortunately,
as in earlier sections, we can make $Y_7/G$ have toric singularities 
after some $G$-equivariant blow-ups $Y_7\to\cdots\to Y_0$,
exactly parallel
to those in the characteristic 5 example (section \ref{char5}).
In fact, all the formulas we write for the fixed point loci
will look {\it identical }to those in the characteristic 5 example,
because we only need to write those formulas modulo suitable
error terms.
It will then be easy
to check that $Y_0/G$ is terminal.

The blow-up $Y_1\to Y_0$ at the $G$-fixed point is,
over the open set $U\subset Y_0$:
$$\{ ((x_0,x_1,e_2),[y_0,y_1,y_2])\in U\times_{\Z_5}
\P^2_{\Z_5}: x_0y_1=x_1y_0,\; x_0y_2=e_2y_0,\; x_1y_2=e_2y_1\}.$$
We will see that the fixed point set in $Y_1$
is a curve isomorphic to $\P^1_{\F_5}$.
To check that, first work in the open subset
$\{y_0=1\}$ in $Y_1$, with coordinates $(s_0,y_1,y_2)$;
here $(s_0,s_1,e_2)=(s_0,s_0y_1,s_0y_2)$. This is an open neighborhood
of the origin in 
$$\Spec \Z_5[s_0,y_1,y_2]/((s_0y_2)^5-5(s_0y_2)^4
+25(s_0y_2)^2-25(s_0y_2)+5),$$
by the equation for $e_2$.
Since $e_2=s_0y_2$, $e_2$ is in the ideal $(s_0)$, and hence 5 is also
in $(s_0)$ (which lets us simplify formulas written
modulo $(s_0)$). It is straightforward to compute how $G$ acts
in this chart, but we do not write it out, for brevity.
The exceptional divisor $E_0$ is $\{s_0=0\}$, in this chart
(and so $E_0$ is isomorphic to $\P^2$ over $\F_5$).
The fixed point scheme $Y_1^G$ is defined by the vanishing of:
$I(s_0)=s_0(-y_1+O(s_0))$, $I(y_1)=(y_1^2+O(s_0))/(1-y_1+O(s_0))$,
and $I(y_2)=y_2(y_1+O(s_0))/(1-y_1+O(s_0))$. We know that $Y_1^G$
is contained (as a set) in $E_0$ (since $Y_0^G$ is only the origin
in characteristic $5$).
So the fixed point set is the line
$\{0=s_0=y_1\}$, in this chart.

In the chart $\{y_1=1\}$ in $Y_1$, we have $s_0=s_1y_0$
and $e_2=s_1y_2$, so we have coordinates $(y_0,s_1,y_2)$. Here
$E_0=\{s_1=0\}$. We can write the action of $G$ in these coordinates
(for example using Magma). We find that the fixed point scheme $Y_1^G$
is defined by the vanishing of:
$I(y_0)=-1+O(s_1)$, $I(s_1)=s_1^2(1+y_0-2y_0^2+O(s_1))$,
and $I(y_2)=s_1y_2(-1-y_0-y_2+2y_0^2+O(s_1))$. Since $Y_1^G$ is contained
(as a set) in $E_0$, the first equation shows that $Y_1^G$ is empty,
in this chart. In the last chart $\{y_2=1\}$ in $Y_1$,
we have coordinates $(y_0,y_1,e_2)$, and $E_0=\{ e_2=0\}$.
The fixed point scheme is defined by:
$I(y_0)=-y_1+O(e_2)$, $I(y_1)=e(y_1+y_1^2+y_0y_1-2y_0^2+O(e_2))$,
and $I(e_2)=e_2^2(-1+O(e_2))$. Since $Y_1^G$ is contained 
(as a set) in $E_0$, the fixed point set is the line
$\{0=y_1=e_2\}$, the same line seen in an earlier chart.

Thus $(Y_1^G)_{\red}$ is isomorphic to $\P^1_{\F_5}$.
Our criterion for a quotient by $G$
to have toric singularities (Theorem \ref{mup}) requires
the $G$-fixed locus to have codimension 1; so let $Y_2$ be the blow-up
of $Y_1$ along this $\P^1$. Clearly $G$ continues to act on $Y_2$.
The exceptional divisor $E_1$ in $Y_2$ is a $\P^1$-bundle over
$\P^1_{\F_5}$, and so the natural way to cover $E_1$ by affine charts
involves 4 charts, as follows. (See Figure \ref{F5figure012},
which applies to the current example as well.)

Over the open set $\{y_0=1\}$ in $Y_1$, $Y_2$ is the blow-up
along the $G$-fixed curve $\{0=s_0=y_1\}$, so $Y_2$ has coordinates
$((s_0,y_1,y_2),[w_0,w_1])$. 
First take $\{w_0=1\}$,
so $y_1=s_0w_1$, and we have coordinates $(s_0,w_1,y_2)$.
As in every other chart, there are
three variables over $\Z_5$, but this is a regular scheme of dimension 3
because of the equation satisfied by $e_2$.
In this case, we have $e_2=s_0y_2$,
and so
$$0=(s_0y_2)^5-5(s_0y_2)^4+25(s_0y_2)^2-25(s_0y_2)+5.$$
In this chart, $E_0$ does not appear, and $E_1=\{s_0=0\}$. The fixed point
scheme $Y_2^G$ is defined by: $I(s_0)=s_0^2(-1-w_1+O(s_0))$,
$I(w_1)=-2+O(s_0)$, and $I(y_2)=s_0y_2(1+w_1-y_2+O(s_0))$.
We know that the fixed point set is contained in $E_1$, and so the formula
for $I(w_1)$ implies that $Y_2^G$ is empty, in this chart.

In the other chart $\{w_1=1\}$ in $Y_2$ over the same open set in $Y_1$,
we have $s_0=y_1w_0$, and so $Y_2$ has coordinates $(w_0,y_1,y_2)$.
Here $E_0=\{w_0=0\}$, $E_1=\{y_1=0\}$. Also, $e_2=w_0y_1y_2$.
The fixed point scheme
is defined by $I(w_0)=w_0(2w_0+O(y_1))/(1-2w_0+O(y_1))$,
$I(y_1)=y_1(-2w_0+O(y_1))$, and $I(y_2)=y_1y_2(1+w_0
-w_0y_2+O(y_1))$. So $Y_2^G$ is the line $\{0=w_0=y_1\}=E_0\cap E_1$
over $\F_5$, in this chart.

To see the rest of $E_1\subset Y_2$, work over the open set
$\{y_2=1\}$ in $Y_1$. Here $Y_2$ is the blow-up along
the $G$-fixed curve $\{0=y_1=e_2\}$, so $Y_2$ has coordinates
$((y_0,y_1,e_2),[r_1,r_2])$. First take $\{r_1=1\}$ in $Y_2$,
so $e_2=y_1r_2$, and we have coordinates $(y_0,y_1,r_2)$.
Here $E_0=\{r_2=0\}$ and $E_1=\{y_1=0\}$. Here $Y_2^G$ is given by
$I(y_0)=y_1(-1+y_0r_2-y_0^2r_2+O(y_1))$, $I(y_1)
=y_1r_2(-2y_0^2+O(y_1))$,
and $I(r_2)=r_2^2(2y_0^2+O(y_1))/(1-2y_0^2r_2+O(y_1))$.
We know that the fixed point set is contained in $E_1$,
and we read off that it is the union of the two lines
$\{0=y_1=r_2\}=E_0\cap E_1$ and $\{0=y_0=y_1\}$ in $E_1$. The first curve
appeared in an earlier chart, and the second is new.
Finally, the other open set is $\{r_2=1\}$ in $Y_2$,
so $y_1=e_2r_1$, and we have coordinates $(y_0,r_1,e_2)$.
Here $E_0$ does not appear, and $E_1=\{e_2=0\}$. Here $Y_2^G$
is given by $I(y_0)=e_2(y_0-r_1-y_0^2+O(e_2))$,
$I(r_1)=-2y_0^2+O(e_2)$, and $I(e_2)=e_2^2(-1+O(e_2))$.
We read off that the fixed point set is the curve $\{0=y_0=e_2\}$,
which is the second curve in the previous chart.

Thus $(Y_2)^G$ as a set is the union of two $\P^1$'s over $\F_5$ meeting
at a point. We are trying to make the fixed locus have codimension 1,
and so our next step is to blow up one of those curves.
Namely, let $Y_3$ be the blow-up of $Y_2$ along the $G$-fixed curve
$E_0\cap E_1$. The exceptional divisor $E_2$ in $Y_3$ is a $\P^1$-bundle
over $\P^1_{\F_5}$, and so we need to look
at four affine charts to see all of it.
(See Figure \ref{F5figure34},
which applies to the current example as well.)

First, work over the open set $\{r_1=1\}$ in $Y_2$
over $\{y_2=1\}$ in $Y_1$. Then $Y_3$ is the blow-up
along the curve $\{0=y_1=r_2\}=E_0\cap E_1$, and so $Y_3$ has coordinates
$(y_0,y_1,r_2),[z_1,z_2]$. First take $\{z_1=1\}$, so $r_2=y_1z_2$,
and we have coordinates $(y_0,y_1,z_2)$. 
As in every other chart, there are
three variables over $\Z_5$, but this is a regular scheme of dimension 3
because of the equation satisfied by $e_2$.
In this case, we have $e_2=y_1^2z_2$,
and so
$$0=(y_1^2z_2)^5-5(y_1^2z_2)^4+25(y_1^2z_2)^2-25(y_1^2z_2)+5.$$

In this chart, $E_0=\{z_2=0\}$,
$E_1$ does not appear, and $E_2=\{y_1=0\}$. The fixed point scheme $Y_3^G$
is defined by: $I(y_0)=y_1(-1+O(y_1))$, $I(y_1)=y_1^2z_2(-2y_0^2+O(y_1))$,
and $I(z_2)=y_1z_2^2(-y_0^2+O(y_1))$. These equations are equivalent
to $y_1=0$, near $E_2$; so the fixed point scheme $Y_2^G$ is the Cartier
divisor $E_2$, in this chart. (Thus, by Theorem \ref{kl},
$Y_2/G$ is regular, in this open set.)

The other chart
is $\{z_2=1\}$ in $Y_3$, so $y_1=r_2z_1$, and we have coordinates
$(y_0,z_1,r_2)$. Here $E_0$ does not appear, $E_1=\{z_1=0\}$,
and $E_2=\{r_2=0\}$. Also, $e_2=z_1r_2^2$.
The fixed point scheme $Y_3^G$ is given by
$I(y_0)=z_1r_2(-1+O(r_2))$, $I(z_1)=z_1r_2(y_0^2+O(r_2))$,
and $I(r_2)=r_2^2(2y_0^2+O(r_2))$. The fixed point scheme is generically
$E_2$ with multiplicity 1, together with the other fixed curve we knew
from $Y_2$, here given by $\{0=y_0=z_1\}\subset E_1$.
In more detail, the ``bad locus'' where the scheme $Y_3^G$ is not just
$E_2$ as a Cartier divisor is given by removing a factor of $r_2$
from these equations, yielding: $0=z_1(-1+O(r_2))$, $0=z_1(y_0^2+O(r_2))$,
and $0=r_2(2y_0^2+O(r_2))$. We know the fixed locus away from $E_2$,
so assume that $r_2=0$; then these equations show that the bad locus
inside $E_2$ is the curve $\{0=z_1=r_2\}=E_1\cap E_2$.

Fortunately, Theorem \ref{mup} implies that $Y_3/G$ has toric singularities
at points of $E_1\cap E_2$ outside the origin. Namely, let $e=r_2$
and $s=z_1$; then $I(s)=es(\text{unit})$ near $E_1\cap E_2=\{0=z_1=r_2\}$
outside the origin. The theorem gives that $Y_3/G$ has singularity
$\frac{1}{5}(0,1,2)$ at points of $E_1\cap E_2$ outside the origin.

To see all of $E_2\subset Y_3$, we also have to work
over $\{w_1=1\}$ in $Y_2$, with coordinates $(w_0,y_1,y_2)$,
over $\{y_0=1\}$ in $Y_1$.
Here $Y_3$ is the blow-up along the $G$-fixed curve
$\{0=w_0=y_1\}=E_0\cap E_1$, so $Y_3$ has coordinates
$(w_0,y_1,y_2),[v_0,v_1]$. First take $\{v_0=1\}$,
so $y_1=w_0v_1$, and we have coordinates $(w_0,v_1,y_2)$ on $Y_3$.
Here $E_0$ does not appear, $E_1=\{v_1=0\}$,
and $E_2=\{w_0=0\}$. Also, $e_2=w_0^2v_1y_2$.
The fixed point scheme is defined by:
$I(w_0)=w_0^2(2-2v_1+O(w_0))$, $I(v_1)=w_0v_1(1-2v_1+O(w_0))$,
and $I(y_2)=w_0v_1y_2(1+O(w_0))$. In the chart we are working
over in $Y_2$, the fixed set
$Y_2^G$ is only the curve $E_0\cap E_1$ we are blowing up, and so
$Y_3^G$ (in this chart) is contained in $E_2$ as a set. By the equations,
$Y_3^G$ is generically the Cartier divisor $E_2$, and the bad locus
(where that fails) is given by $0=w_0$, $0=v_1(1-2v_1)$,
and $0=v_1y_2$. So the bad locus is the union of the curve
$\{0=w_0=v_1\}=E_1\cap E_2$ and the point $(w_0,v_1,y_2)=(0,
-2,0)$ in $E_2$. By Theorem \ref{mup} (using $e=s=w_0$),
$Y_3/G$ has singularity $\frac{1}{5}(2,1,0)$ everywhere
on the curve $E_1\cap E_2$ (in this chart), in agreement
with an earlier calculation.

To analyze the bad point above, change coordinates temporarily
by $t_1=v_1+2$; then the bad point becomes the origin in coordinates
$(w_0,t_1,y_2)$. In these coordinates, we have
$I(w_0)=w_0^2(1-2t_1+O(w_0))$, $I(t_1)=I(v_1)=(-t_1-2t_1^2+O(w_0))$,
and $I(y_2)=w_0y_2(-2+O(w_0))$. Theorem \ref{mup} applies,
with $s=e=w_0$, and we read off that $Y_3/G$ has singularity
$\frac{1}{5}(1,-1,-2)$ at this point. That is terminal, by the Reid-Tai
criterion (Theorem \ref{reidtai}).

The last chart we need to consider in $Y_3$ is the other open set
$\{v_1=1\}$ over the open set above in $Y_2$,
$\{w_1=1\}\subset Y_2$ over $\{y_0=1\}\subset Y_1$.
So $w_0=y_1v_0$, and we have coordinates $(v_0,y_1,y_2)$.
Here $E_0=\{v_0=0\}$, $E_1$ does not appear,
and $E_2=\{y_1=0\}$. Also, $e_2=v_0y_1^2y_2$.
Here $Y_3^G$ is defined by:
$I(v_0)=v_0y_1(2-v_0+O(y_1))$, $I(y_1)=y_1^2(1-2v_0+O(y_1))$,
and $I(y_2)=y_1y_2(1+O(y_1))$. As in the previous chart,
we know that $Y_3^G$ is contained in $E_2$ as a set.
By the equations,
$Y_3^G$ is generically the Cartier divisor $E_2$, and the bad locus
(where that fails) is given by $0=y_1$, $0=v_0(2-v_0)$,
and $0=y_2$. Thus there are two bad points in this chart,
$(v_0,y_1,y_2)$ equal to $(2,0,0)\in E_2$ or $(0,0,0)\in E_0\cap E_2$.
The first is the bad point from the previous chart, but the second one
is new. Theorem \ref{mup} works to analyze the second point (the origin),
with $e=s=y_1$. We read off that $Y_3/G$ has singularity
$\frac{1}{5}(2,1,1)$ at this point.

That finishes the analysis of $Y_3$. In particular, as a set, $Y_3^G$
is the union of the divisor $E_2$ and a curve in $E_1$. It is tempting
to blow up the $G$-fixed curve next,
but that leads to a large number of blow-ups over one point of the curve,
where the fixed point scheme is especially complicated. We therefore
define $Y_4$ as the blow-up at that point,
and only later
blow up the whole curve. This leads
more efficiently to toric singularities.

Namely, let $Y_4$ be the blow-up of $Y_3$ at the origin
in the chart $\{r_2=1\}$ in $Y_2$ (unchanged in $Y_3$),
with coordinates $(y_0,r_1,e_2)$. So $Y_4$ has coordinates
$(y_0,r_1,e_2),[q_0,q_1,q_2]$. The exceptional divisor $E_3$
is isomorphic to $\P^2_{\F_5}$, and so it is covered by 3 affine charts.
First take $\{q_0=1\}$ in $Y_4$, so $r_1=y_0q_1$
and $e_2=y_0q_2$, and we have coordinates $(y_0,q_1,q_2)$.
As in every other chart, there are
three variables over $\Z_5$, but this is a regular scheme of dimension 3
because of the equation satisfied by $e_2$.
In this case, we have $e_2=y_0q_2$,
and so
$$0=(y_0q_2)^5-5(y_0q_2)^4+25(y_0q_2)^2-25(y_0q_2)+5.$$

Here $E_1=\{q_2=0\}$ and $E_3=\{y_0=0\}$. The fixed point scheme
$Y_4^G$ is defined by: $I(y_0)=y_0^2q_2(1-q_1+O(y_0))$,
$I(q_1)=y_0(-2+q_1q_2+q_1^2q_2+O(y_0))$,
and $I(q_2)=y_0q_2^2(-2+q_1+O(y_0))$.
So $Y_4^G$ is generically
the Cartier divisor $E_3$; the $G$-fixed curve in $E_1$ does not appear
in this chart. The bad locus (where the scheme $Y_4^G$ is not
just $E_3$) is given by $0=y_0$, $0=-2+q_1q_2+q_1^2q_2$,
and $0=q_2^2(-2+q_1)$. By the second equation, $q_2\neq 0$,
and so the third equation gives that $q_1=2$. Then the second equation
gives that $0=-2+2q_2-q_2=-2+q_2$, so $q_2=2$. That is,
there is only one bad point in this chart,
$(y_0,q_1,q_2)=(0,2,2)\in E_3$. To analyze that point,
change coordinates temporarily by $s_1=q_1-2$ and $s_2=q_2-2$.
In these coordinates, $I(y_0)=y_0^2(-2-s_1-s_2-s_1s_2+O(y_0))$,
$I(s_1)=I(q_1)=y_0(s_2+2s_1^2+s_1^2s_2+O(y_0))$,
and $I(s_2)=I(q_2)=y_0(-s_1-s_1s_2+s_1s_2^2+O(y_0))$.
By Theorem \ref{mup}, with $e=s=y_0$, $Y_4/G$
has a $\mu_5$-quotient singularity. Explicitly, the linear map
$\varphi$ over $\F_5$ in the theorem is $\varphi(y_0)=-2y_0$,
$\varphi(s_1)=s_2$, and $\varphi(s_2)=-s_1$,
which has eigenvalues $-2,2,-2$. So $Y_4/G$ has singularity
$\frac{1}{5}(-2,2,-2)$ at this point. This is terminal,
by the Reid-Tai criterion.

Next, take the open set $\{q_1=1\}$ in $Y_4$, so $y_0=r_1q_0$
and $e_2=r_1q_2$, and $Y_4$ has coordinates
$(q_0,r_1,q_2)$. Here $E_1=\{q_2=0\}$ and $E_3=\{r_1=0\}$.
The fixed point scheme $Y_4^G$ is defined by:
$I(q_0)=r_1(-q_2-q_0q_2+2q_0^3+O(r_1))$,
$I(r_1)=r_1^2(2q_2-2q_0^2+O(r_1))$, and
$I(q_2)=r_1q_2(2q_2+2q_0^2+O(r_1))$. So $Y_4^G$ is generically
the Cartier divisor $E_3$, together with the $G$-fixed curve
$\{0=q_0=q_2\}$ in $E_1$. The bad locus in $E_3$
is given by $0=r_1$, $0=-q_2-q_0q_2+2q_0^3$,
and $0=q_2(2q_2+2q_0^2)$. This yields two bad points,
$(q_0,r_1,q_2)$ equal to $(-2,0,1)$ or $(0,0,0)$. The first one
is the bad point
from the previous chart, and the second is not surprising,
as it is the intersection point of $E_3$ with the $G$-fixed curve.

Finally, take the open set $\{q_2=1\}$ in $Y_4$,
so $y_0=e_2q_0$ and $r_1=e_2q_1$, and we have coordinates
$(q_0,q_1,e_2)$. Here $E_1$ does not appear, and $E_3=\{e_2=0\}$.
The fixed point scheme $Y_4^G$ is defined by:
$I(q_0)=e(2q_0-q_1+O(e_2))$, $I(q_1)=e(-2q_1-2q_0^2+O(e_2))$,
and $I(e_2)=e_2^2(-1+O(e_2))$. So $Y_4^G$ is generically $E_3$.
The bad locus in $E_3$ is given by: $0=e_2$,
$0=2q_0-q_1$, and $0=-2q_1-2q_0^2$. This yields two bad points,
$(q_0,q_1,e_2)$ equal to $(-2,1,0)$ (seen in the previous two charts)
or $(0,0,0)$, which is new. Theorem \ref{mup} applies
at this new point, with $e=s:=e_2$. The $\F_5$-linear map $\varphi$
is given by $\varphi(q_0)=2q_0-q_1$, $\varphi(q_1)=-2q_1$,
and $\varphi(e_2)=-e_2$. So $\varphi$ has eigenvalues $(2,-2,-1)$,
and hence $Y_4/G$ has singularity
$\frac{1}{5}(2,-2,-1)$ at this point. This is terminal,
by the Reid-Tai criterion.

That completes our description of $Y_4$. Next, let $Y_5$ be the blow-up
of $Y_4$ along the $G$-fixed curve in $E_1$. The exceptional
divisor $E_4$ in $Y_5$ is a $\P^1$-bundle over $\P^1_{\F_5}$,
and so it is covered by four affine charts. First work over the open set
$\{z_2=1\}$ in $Y_3$ (unchanged in $Y_4$), with coordinates
$(y_0,z_1,r_2)$; this contains the point where the $G$-fixed curve
in $E_1=\{z_1=0\}$ meets $E_2=\{r_2=0\}$.
Here $Y_5$ is the blow-up along the $G$-fixed curve
$\{0=y_0=z_1\}$, and so $Y_5$ has coordinates $(y_0,z_1,r_2),[n_0,n_1]$.
First take the open set $\{n_0=1\}$ in $Y_5$, so $z_1=y_0n_1$, and we have
coordinates $(y_0,n_1,r_2)$.
As in every other chart, there are
three variables over $\Z_5$, but this is a regular scheme of dimension 3
because of the equation satisfied by $e_2$.
In this case, we have $e_2=y_0n_1r_2^2$,
and so
$$0=(y_0n_1r_2^2)^5-5(y_0n_1r_2^2)^4+25(y_0n_1r_2^2)^2-25(y_0n_1r_2^2)+5.$$

In this chart, $E_1=\{n_1=0\}$, $E_2=\{r_2=0\}$,
and $E_4=\{y_0=0\}$. The fixed point scheme $Y_5^G$ is defined by:
$I(y_0)=y_0n_1r_2(-1+O(y_0))$, $I(n_1)=n_1r_2(n_1+O(y_0))/(1-n_1r_2+O(y_0))$,
and $I(r_2)=y_0r_2^2(-2n_1r_2+O(y_0))$.
So $Y_5^G$, as a set, is the union
of the divisor $E_2$ and the curve $\{0=y_0=n_1\}=E_1\cap E_4$. (In particular,
the fixed point set is still not all of codimension 1.) We have analyzed
the bad locus of $E_2$ in previous steps, but we have to add here
that the bad locus of $E_2$ is disjoint from $E_2\cap E_4$ except
for the point where $E_2$ meets the $G$-fixed curve, by the formula
for $I(n_1)$.
(See Figure \ref{F5figure56},
which applies to the current example as well.)

The other chart is $\{n_1=1\}$ in $Y_5$, so $y_0=z_1n_0$,
and we have coordinates $(n_0,z_1,r_2)$. Here $E_1$ does not appear,
$E_2=\{r_2=0\}$, and $E_4=\{z_1=0\}$. Also, $e_2=z_1r_2^2$.
The fixed point scheme $Y_5^G$
is defined by: $I(n_0)=r_2(-1+O(z_1))$, $I(z_1)=z_1^2r_2(-2r_2+O(z_1))$,
and $I(r_2)=z_1r_2^2(-2r_2+O(z_1))$. These equations reduce to $r_2=0$
near $E_4$, and so $Y_5^G$ is the Cartier divisor $E_2$,
in this chart.

To finish our description of $E_4$ in $Y_5$, we work
over the open set where the $G$-fixed curve in $Y_4$ meets $E_3$,
namely $\{q_1=1\}$ in $Y_4$. Here $Y_4$ has coordinates
$(q_0,r_1,q_2)$, $E_1=\{q_2=0\}$, $E_3=\{r_1=0\}$,
and the $G$-fixed curve is $\{0=q_0=q_2\}$ in $E_1$.
So the blow-up $Y_5$ along the $G$-fixed curve has coordinates
$(q_0,r_1,q_2),[u_0,u_2]$. First take $\{u_0=1\}$ in $Y_5$,
so $q_2=q_0u_2$, and we have coordinates $(q_0,r_1,u_2)$.
Here $E_1=\{u_2=0\}$, $E_3=\{r_1=0\}$, and $E_4=\{q_0=0\}$.
Also, $e_2=q_0r_1u_2$.
The fixed point scheme $Y_5^G$ is defined by:
$I(q_0)=q_0r_1(-u_2+O(q_0))$, $I(r_1)=q_0r_1^2(2u_2+O(q_0))$,
and $I(u_2)=r_1u_2^2(1+O(q_0))/(1-r_1u_2+O(q_0))$. We know the fixed
set outside $E_4$, and so we read off that the fixed set is the divisor $E_3$
together with the $G$-fixed curve $E_1\cap E_4$ found earlier.
We have analyzed the bad set of $E_3$ away from $E_4$ in earlier blow-ups,
and we see from the formula for $I(u_2)$
that the bad set of $E_3$ near $E_3\cap E_4$ is only
the point $E_1\cap E_3\cap E_4$ where the $G$-fixed curve meets $E_3$.

The other chart is $\{u_2=1\}$ in $Y_5$. Here $q_0=q_2u_0$, and so we have
coordinates $(u_0,r_1,q_2)$. Here $E_1$ does not appear,
$E_3=\{r_1=0\}$, and $E_4=\{q_2=0\}$.
Also, $e_2=r_1q_2$.
The fixed point scheme
$Y_5^G$ is defined by: $I(u_0)=r_1(-1+O(q_2))$,
$I(r_1)=r_1^2q_2(2+O(q_2))$, and $I(q_2)=r_1q_2^2(2+O(q_2))$. These
equations reduce to $r_1=0$ near $E_4$, and so the fixed point scheme
$Y_5^G$ is the Cartier divisor $E_3$, in this chart.

That completes our description of $Y_5$. Let $Y_6$ be the blow-up
of $Y_5$ along the $G$-fixed curve $E_1\cap E_4$. The exceptional
divisor $E_5$ in $Y_6$ is a $\P^1$-bundle over $\P^1_{\F_5}$, covered by
four affine charts. First take the open set $\{n_0=1\}$ in $Y_5$, 
which contains the point where the $G$-fixed curve meets $E_2$.
Here $Y_5$ has coordinates $(y_0,n_1,r_2)$,
with $E_1=\{n_1=0\}$, $E_2=\{r_2=0\}$,
and $E_4=\{y_0=0\}$. Since $Y_6$ is the blow-up along
the $G$-fixed curve $\{0=y_0=n_1\} =E_1\cap E_4$, $Y_6$
has coordinates $(y_0,n_1,r_2),[m_0,m_1]$. First take $\{m_0=1\}$
in $Y_6$, so $n_1=y_0m_1$, and we have coordinates
$(y_0,m_1,r_2)$. As in every other chart, there are
three variables over $\Z_5$, but this is a regular scheme of dimension 3
because of the equation satisfied by $e_2$.
In this case, we have $e_2=y_0^2m_1r_2^2$,
and so
$$0=(y_0^2m_1r_2^2)^5-5(y_0^2m_1r_2^2)^4+25(y_0^2m_1r_2^2)^2
-25(y_0^2m_1r_2^2)+5.$$

In this chart, $E_1=\{m_1=0\}$, $E_2=\{r_2=0\}$,
$E_4$ does not appear, and $E_5=\{y_0=0\}$. The fixed point scheme
$Y_6^G$ is defined by: $I(y_0)=y_0^2m_1r_2(-1+O(y_0))$,
$I(m_1)=y_0m_1r_2(2m_1+O(y_0))$,
and $I(r_2)=y_0^2r_2^2(2-2m_1r_2+O(y_0))$.
We know the fixed point
set away from $E_5$, and so we read off that the fixed point scheme
is generically the Cartier divisor $E_2+E_5$. (Since $E_5$ is fixed
by $G$, we have finally made the fixed point set of codimension 1.)
Let $e=y_0r_2$. The bad locus
(where the scheme $Y_6^G$ is more than the Cartier divisor $E_2+E_5$),
on $E_5$, is given by factoring out $e$ from the equations
and setting $y_0=0$, so we get: $0=y_0$ and $0=2m_1^2$. So,
as a set, the bad locus is the curve $\{0=y_0=m_1\}=E_1\cap E_5$.
Theorem \ref{mup} does not seem to apply to this curve,
and so $Y_6/G$ might not
have toric singularities there; we will have to blow up one more time.

For now, look at the other open set, $\{m_1=1\}$ in $Y_6$.
So $s_0=n_1m_0$, and we have coordinates $(m_0,n_1,r_2)$. Here
$E_1$ does not appear, $E_2=\{r_2=0\}$, $E_4=\{m_0=0\}$,
and $E_5=\{n_1=0\}$. 
Also, $e_2=y_0m_1^2r_2^2$.
The fixed point scheme $Y_6^G$ is defined by:
$I(m_0)=m_0n_1r_2(-2+O(n_1))$, $I(n_1)=n_1^2r_2(1+O(n_1))$,
and $I(r_2)=m_0n_1^2r_2^2(2m_0-2r_2+O(n_1))$. Since we know the fixed point
set outside $E_5$, we read off that the fixed point scheme
is generically the Cartier divisor $E_2+E_5$. Let $e=n_1r_2$.
The bad locus (where the scheme $Y_6^G$ is more than the Cartier
divisor $E_2+E_5$) is the curve $\{0=m_0=n_1\}=E_4\cap E_5$.
Fortunately, Theorem \ref{mup} applies, with $s=m_0$. We read
off that $Y_6/G$ has singularity $\frac{1}{5}(-2,1,0)$ along the whole
curve $E_4\cap E_5$, in this chart.

To finish describing $E_5\subset Y_6$, we have to work over the open
set $\{u_0=1\}$ in $Y_5$, where the $G$-fixed curve $E_1\cap E_4$
in $Y_5$ meets $E_3$. Here $Y_5$ has coordinates
$(q_0,r_1,u_2)$, with $E_1=\{u_2=0\}$, $E_3=\{r_1=0\}$, and $E_4=\{q_0=0\}$.
Then $Y_6$ is the blow-up along the $G$-fixed curve
$\{0=q_0=u_2\}=E_1\cap E_4$, so $Y_6$ has coordinates
$(q_0,r_1,u_2),[t_0,t_2]$. First take $\{t_0=1\}$ in $Y_6$,
so $u_2=q_0u_2$ and we have coordinates $(q_0,r_1,t_2)$.
Here $E_1=\{t_2=0\}$, $E_3=\{r_1=0\}$,
$E_4$ does not appear, and $E_5=\{q_0=0\}$.
Also, $e_2=q_0^2r_1t_2$.
The fixed point scheme
is defined by: $I(q_0)=q_0^2r_1(-t_2+O(q_0))$,
$I(r_1)=q_0^2r_1^2(-2+2t_2+O(q_0))$, and $I(t_2)=q_0r_1t_2(2t_2+O(q_0))$.
So the fixed point scheme is generically $E_3+E_5$. Let $e=q_0r_1$.
The bad locus (where the scheme $Y_6^G$ is more than the Cartier
divisor $E_3+E_5$), in $E_5$, is given by $0=q_0$
and $0=2t_2^2$, so (as a set) it is the curve $\{0=q_0=t_2\}
=E_1\cap E_5$, which we met in an earlier chart.

The other chart is $\{t_2=1\}$ in $Y_6$, so $q_0=u_2t_0$,
and we have coordinates $(t_0,r_1,u_2)$. Here $E_1$ does not appear,
$E_3=\{r_1=0\}$, $E_4=\{t_0=0\}$, and $E_5=\{u_2=0\}$.
Also, $e_2=q_0^2r_1t_2$.
The fixed point scheme $Y_6^G$ is defined by: $I(t_0)=t_0r_1u_2(-2+O(u_2))$,
$I(r_1)=t_0r_1^2u_2^2(2-2t_0+O(u_2))$, and
$I(u_2)=r_1u_2^2(1+O(u_2))$. So $Y_6^G$ is generically $E_3+E_5$.
Let $e=r_1u_2$.
The bad locus (where the scheme $Y_6^G$ is more than the Cartier
divisor $E_3+E_5$), in $E_5$, is the curve $\{0=t_0=u_2\}
=E_4\cap E_5$, which we met in an earlier chart. Theorem \ref{mup}
applies, with $s=t_0$. Namely, $Y_6/G$ has singularity
$\frac{1}{5}(-2,0,1)$ everywhere on the curve $E_4\cap E_5$ in this chart
(including the origin, which did not appear in the earlier chart).

That completes our description of $Y_6$. In particular, the $G$-fixed locus
has codimension 1 in $Y_6$, and $Y_6/G$ has toric singularities
outside the image of the curve $E_1\cap E_5$. Let $Y_7$ be the blow-up
of $Y_6$ along that curve. The exceptional divisor $E_6$ in $Y_7$
is a $\P^1$-bundle over $\P^1_{\F_5}$, and so we will cover $E_6$
with four affine charts. First, work over the open set $\{m_0=1\}$
in $Y_6$, where the bad curve $E_1\cap E_5$ meets $E_2$.
Here $Y_6$ has coordinates
$(y_0,m_1,r_2)$, with $E_1=\{m_1=0\}$, $E_2=\{r_2=0\}$,
and $E_5=\{y_0=0\}$. Since $Y_7$ is the blow-up along the curve
$\{0=y_0=m_1\}=E_1\cap E_5$, $Y_7$ has coordinates
$(y_0,m_1,r_2),[j_0,j_1]$. (See Figure \ref{F5figure7},
which applies to the current example as well.)

First take $\{j_0=1\}$ in $Y_7$,
so $m_1=y_0j_1$, and we have coordinates $(y_0,j_1,r_2)$.
As in every other chart, there are
three variables over $\Z_5$, but this is a regular scheme of dimension 3
because of the equation satisfied by $e_2$.
In this case, we have $e_2=y_0^3j_1r_2^2$,
and so
$$0=(y_0^3j_1r_2^2)^5-5(y_0^3j_1r_2^2)^4+25(y_0^3j_1r_2^2)^2
-25(y_0^3j_1r_2^2)+5.$$

In this chart,
$E_1=\{j_1=0\}$, $E_2=\{r_2=0\}$, $E_5$ does not appear,
and $E_6=\{y_0=0\}$. The fixed point scheme $Y_7^G$
is defined by: $I(y_0)=y_0^3j_1r_2(-1+O(y_0))$,
$I(j_1)=y_0^2j_1r_2(1-2j_1+O(y_0))$,
and $I(r_2)=y_0^2r_2^2(2+O(y_0))$. So $Y_7^G$ is generically
the Cartier divisor $E_2+2E_6$. 
Let $e=y_0^2r_2$. The bad locus
(where the scheme $Y_7^G$ is more than the Cartier divisor $E_2+2E_6$),
in $E_6$, is given by $0=y_0$, $0=j_1(1-2j_1)$, and $0=r_2$,
so it consists of the two points $(y_0,j_1,r_2)$ equal to $(0,0,0)
=E_1\cap E_2\cap E_6$
or $(0,-2,0)\in E_2\cap E_6$. At the first point, Theorem \ref{mup} applies,
with $s=r_2$. We read off that $Y_7/G$ has singularity
$(0,1,2)$ everywhere on the curve $E_1\cap E_2$ (including the origin,
which did not appear when we saw $E_1\cap E_2$ in an earlier chart).
To analyze the second point, change coordinates temporarily
by $s_1=j_1+2$; then that point becomes the origin
in coordinates $(y_0,s_1,r_2)$. We have $I(y_0)=y_0^3r_2(2-s_1+O(y_0))$,
$I(s_1)=I(j_1)=y_0^2r_2(-s_1-2s_1^2+O(y_0))$,
and $I(r_2)=y_0^2r_2^2(2+O(y_0))$. Theorem \ref{mup} applies,
with $s=r_2$. We read off that $Y_7/G$ has singularity
$\frac{1}{5}(2,-1,2)$ at this point.

The other open set is $\{j_1=1\}$ in $Y_7$, so $y_0=m_1j_0$,
and we have coordinates $(j_0,m_1,r_2)$. Here $E_1$ does not appear,
$E_2=\{r_2=0\}$, $E_5=\{j_0=0\}$, and $E_6=\{m_1=0\}$.
Also, $e_2=j_0^2m_1^3r_2^2$.
The fixed point scheme $Y_7^G$ is defined by:
$I(j_0)=j_0^2m_1^2r_2(2-j_0+O(m_1))$,
$I(m_1)=j_0m_1^3r_2(2+j_0+O(m_1))$, and
$I(r_2)=j_0^2m_1^2r_2^2(2+O(m_1))$.
So $Y_7^G$
is generically $E_2+E_5+2E_6$. Let $e=j_0m_1^2r_2$.
The bad locus (where the scheme $Y_7^G$ is more than the Cartier
divisor $E_2+E_5+2E_6$), on $E_6$, is given by:
$0=m_1$, $0=j_0(2-j_0)$, and $0=j_0r_2$. So the bad locus
is the union of the curve $\{0=j_0=m_1\}=E_5\cap E_6$
and the point $(j_0,m_1,r_2)=(2,0,0)$ in $E_2\cap E_6$.
That point is the one we analyzed in the previous chart.
For the curve, Theorem \ref{mup} applies, using
$s=j_0$. We read off that $Y_7/G$ has singularity
$\frac{1}{5}(2,2,0)$ everywhere on the curve $E_5\cap E_6$,
in this chart.

Last, work over the open set
$\{t_0=1\}$ in $Y_6$,
where the bad curve $E_1\cap E_5$ meets $E_3$. 
Here $Y_6$ has coordinates $(q_0,r_1,t_2)$,
with $E_1=\{t_2=0\}$, $E_3=\{r_1=0\}$,
and $E_5=\{q_0=0\}$. We obtain $Y_7$ by blowing up along the curve
$\{0=q_0=t_2\}=E_1\cap E_5$, so $Y_7$ has coordinates
$(q_0,r_1,t_2),[x_0,x_2]$. First take $\{x_0=1\}$,
so $t_2=q_0x_2$, and we have coordinates $(q_0,r_1,x_2)$.
Here $E_1 = \{x_2=0\}$, $E_3 = \{r_1=0\}$,
$E_5$ does not appear, and $E_6 = \{q_0=0\}$.
Also, $e_2=q_0^3r_1x_2$.
The fixed point
scheme $Y_7^G$ is defined by:
$I(q_0)=q_0^3r_1(2-x_2+O(q_0))$, $I(r_1)=q_0^2r_1^2(-2+O(q_0))$,
and $I(x_2)=q_0^2r_1x_2(1-2x_2+O(q_0))$. So $Y_7^G$ is generically
$E_3+2E_6$. 
Let $e=q_0^2r_1$.
The bad locus (where the scheme $Y_7^G$ is more than
the Cartier divisor $E_3+2E_6$), in $E_6$, is given by:
$0=q_0$, $0=r_1$, and $0=x_2(1-2x_2)$, so it consists
of the two points $(q_0,r_1,x_2)$ equal to $(0,0,0)=E_1\cap E_3\cap E_6$
or $(0,0,-2)$ in $E_3\cap E_6$. Since $I(r_1)=er_1(\text{unit})$,
Theorem \ref{mup} applies at both points. At the origin,
the theorem gives that $Y_7/G$ has singularity
$\frac{1}{5}(2,-2,1)$, which is terminal.
For the other point, change coordinates
temporarily by $y_2=x_2+2$, so that the point becomes the origin
in coordinates $(q_0,r_1,y_2)$. We have $I(q_0)
=q_0^3r_1(-1-y_2+O(q_0))$, $I(r_1)=q_0^2r_1^2(-2+O(q_0))$,
and $I(y_2)=I(x_2)=q_0^2r_1(-y_2-2y_2^2+O(q_0))$.
So Theorem \ref{mup} gives that $Y_7/G$ has singularity
$\frac{1}{5}(-1,-2,-1)$ at this point.

The other chart is $\{x_2=1\}$ in $Y_7$, so $q_0=t_2x_0$,
and we have coordinates $(x_0,r_1,t_2)$. Here $E_1$ does not appear,
$E_3=\{r_1=0\}$, $E_5=\{x_0=0\}$, and $E_6=\{t_2=0\}$.
Also, $e_2=x_0^2r_1t_2^3$.
The fixed
point scheme $Y_7^G$ is defined by:
$I(x_0)=x_0^2r_1t_2^2(2-x_0+O(t_2))$,
$I(r_1)=x_0^2r_1^2t_2^2(-2+O(t_2))$,
and $I(t_2)=x_0r_1t_2^3(2-2x_0+O(t_2))$.
So $Y_7^G$
is generically $E_3+E_5+2E_6$. Let $e=x_0r_1t_2^2$.
The bad locus (where $Y_7^G$
is more than the Cartier divisor $E_3+E_5+2E_6$),
in $E_6$, is given by: $0=t_2$, $0=x_0(2-x_0)$, and $0=x_0r_1$,
which is the union of the curve $\{0=x_0=t_2\}=E_5\cap E_6$
and the point $(x_0,r_1,t_2)=(2,0,0)$ in $E_3\cap E_6$.
We analyzed that point in the previous chart. Theorem \ref{mup}
applies to the curve, using $s=x_0$. We read off that $Y_7/G$
has singularity $\frac{1}{5}(2,0,2)$ everywhere on the curve
$E_5\cap E_6$ (including the origin, which did not appear
in the earlier chart where we met this curve).

That completes our analysis of $Y_7$; we have shown
that $Y_7/G$ has toric singularities. The sequence
of blow-ups and the descriptions of the singularities
of $Y_7/G$ are identical to those in the characteristic 5 example,
Theorem \ref{F5thm}. Given that, the proof 
that $Y_0/G$ is terminal is unchanged from that of
the characteristic 5 example.
Theorem \ref{Z5thm} is proved.
\end{proof}

\begin{remark}
\label{Z5cartier}
As in Remark \ref{Z3cartier},
I expect that there is also a 3-dimensional scheme $X$,
flat over $\Z_5$, that is terminal and non-Cohen-Macaulay
with $K_X$ Cartier. Namely, 
one should replace the $p$-adic integer ring $R=\Z_5[\zeta_{25}]^{\Z/4}$
in Theorem \ref{Z5thm} by $S=\Z_5[\zeta_{25}]$, with the action
of $\Z/5\subset (\Z/25)^*$.
\end{remark}

% Omit these bibliography lines if there's no bibliography.

\small \sc UCLA Mathematics Department, Box 951555,
Los Angeles, CA 90095-1555

totaro@math.ucla.edu
\end{document}